\newtheorem{thm}{Theorem}
\newtheorem{lem}[thm]{Lemma}
\newtheorem{prop}[thm]{Proposition}
\newtheorem{df}{Definition}
\newtheorem*{thmT1}{Theorem T1}
\newtheorem*{thmT21}{Theorem T2(1)}
\newtheorem*{thmT22}{Theorem T2(2)}
\newtheorem*{thmT3}{Theorem T3}
\newtheorem*{thmT4}{Theorem T4}
\numberwithin{equation}{section}
\newcommand{\D}		{\mathbb{D}}
\newcommand{\A}		{\mathbb{A}}
\newcommand{\R}		{\mathbb{R}}
\newcommand{\C}		{\mathbb{C}}
\newcommand{\N}		{\mathbb{N}}
\newcommand{\Z}		{\mathbb{Z}}
\newcommand{\rhy}		{\textnormal{RHP}-$\mathscr{Y}$}
\newcommand{\rht}		{\textnormal{RHP}-$\mathscr{T}$}
\newcommand{\rhn}		{{\textnormal{RHP}}-$\mathscr{N}$}
\newcommand{\rhp}		{\textnormal{RHP}-$\mathscr{P}$}
\newcommand{\rhpt}	{\textnormal{RHP}-$\mathscr{\widetilde P}$}
\newcommand{\rhpsi}	{\textnormal{RHP}-$\Psi$}
\newcommand{\rha}		{\textnormal{RHP}-$\mathscr{A}$}
\newcommand{\rhb}		{\textnormal{RHP}-$\mathscr{B}$}
\newcommand{\rhr}		{\textnormal{RHP}-$\mathscr{R}$}
\newcommand{\rhds}	{\textnormal{RH$\bar\partial$P}-$\mathscr{S}$}
\newcommand{\dbd}	{$\bar\partial$\textnormal{P}-$\mathscr{D}$}
\newcommand{\oi}	{\mathcal{I}}
\newcommand{\ob}	{\mathcal{B}}
\newcommand{\oc}	{\mathcal{C}}
\newcommand{\os}	{\mathcal{S}}
\newcommand{\ok}	{\mathcal{K}}
\newcommand{\gm}	{G}
\newcommand{\szf}	{S}
\newcommand{\ed}	{\omega}
\newcommand{\hf}	{\mathrm{H}}
\newcommand{\vp}	{\mathrm{S}}
\newcommand{\cf}	{\mathrm{C}}
\newcommand{\lf}	{\mathrm{L}}
\newcommand{\sof}	{\mathrm{W}}
\newcommand{\map}	{\varphi}
\newcommand{\jt}	{J}
\newcommand{\sr}		{\mathfrak{w}}
\newcommand{\Arg}		{\textnormal{Arg}}
\newcommand{\const}	{\textnormal{const.}}
\newcommand{\supp}	{\textnormal{supp}}
\newcommand{\im}		{\textnormal{Im}}
\newcommand{\re}		{\textnormal{Re}}
\begin{document}

\title[Convergent Interpolation and Jacobi-Type Weights]{Convergent Interpolation to Cauchy Integrals over Analytic Arcs with Jacobi-Type Weights}

\author[L. Baratchart]{Laurent Baratchart}

\address{INRIA, Project APICS \\
2004 route des Lucioles --- BP 93 \\
06902 Sophia-Antipolis, France}

\email{laurent.baratchart@sophia.inria.fr}

\author[M. Yattselev]{Maxim Yattselev}

\address{Corresponding author \\
Center for Constructive Approximation \\
Department of Mathematics, Vanderbilt University \\
Nashville, TN, 37240, USA}

\email{maxim.yattselev@vanderbilt.edu}

\begin{abstract}
We design convergent multipoint Pad\'e interpolation schemes to Cauchy transforms of non-vanishing complex densities with respect to Jacobi-type weights on analytic arcs, under mild smoothness assumptions on the density. We rely on the work \cite{uBY3} for the choice of the interpolation points, and dwell on the Riemann-Hilbert approach to asymptotics of orthogonal polynomials introduced in \cite{KMcLVAV04} in the case of a segment. We also elaborate on the $\bar\partial$-extension of the Riemann-Hilbert technique, initiated in \cite{McLM08} on the line to relax analyticity assumptions. This yields strong asymptotics for the denominator polynomials of the multipoint Pad\'e interpolants, from which convergence follows.
\end{abstract}

\subjclass[2000]{42C05, 41A20, 41A21}

\keywords{orthogonal polynomials with varying weights, non-Hermitian orthogonality, Riemann-Hilbert-$\bar\partial$ method, strong asymptotics, multipoint Pad\'e approximation.}

\maketitle

\section{Introduction}

Classical Pad\'e approximants (or interpolants) and their multipoint generalization are probably the oldest and simplest candidate rational-approximants to a holomorphic function of one complex variable. They are simply those rational functions of type\footnote{A rational function is said to be of type $(m,n)$ if it can be written as the ratio of a polynomial of degree at most $m$ and a polynomial of degree at most $n$.} $(m,n)$ that interpolate the function in $m+n+1$ points of the domain of analyticity, counting multiplicity. \emph {Classical} Pad\'e approximants refer to the case where interpolation takes place in a single point with multiplicity $m+n+1$ \cite{Pade92}.

Besides their everlasting number-theoretic success \cite{Siegel, KratRiv08, RivZud03}, they are common tools in modeling and numerical analysis of various fields, ranging from boundary value problems and convergence acceleration \cite{But64, Gr65, BrezinskiRedivoZaglia,GonNovHen91,DruMos01} to continuous mechanics \cite{AvFaRe07,TeToga01}, quantum mechanics \cite{Baker, Tj_PRA77}, condensed matter physics \cite{SarkarBhatt92}, fluid mechanics \cite{Pozzi}, system and circuits theory \cite{Brezinski,Horiguchi,CelOcTanAt95}, and even page ranking the {\it Web} \cite{BrezR-Z06}.

In spite of this, the convergence properties of Pad\'e or multipoint Pad\'e approximants are still far from being understood. For particular classes of functions like Markov functions, some elliptic functions, and certain entire functions such as Polya frequencies or functions with smooth and fast decaying Taylor coefficients, classical Pad\'e approximants at infinity are known to converge, locally uniformly on the domain of analyticity \cite{Mar95,Suet00,ArmEd70,Lub85}. But when applied to more general cases they seldom accomplish the same, due to the occurrence of ``spurious poles'' that may wander about the domain of analyticity. Further distinction should be made here between diagonal approximants ({\it i.e.} interpolants of type $(m,m)$) and row approximants ({\it i.e.} interpolants of type $(m,n)$ where $n$ is kept fixed), and we refer the reader to the comprehensive monograph \cite{BakerGravesMorris} for a detailed account of many works on the subject. Let us simply mention that, for the case of diagonal approximants which is the most interesting as it treats poles and zeros on equal footing, the disproof of the Pad\'e conjecture \cite{Lub03} and of the Stahl conjecture \cite{Bus02} have only added to the picture that classical Pad\'e approximants are not seen best through the spectacles of uniform convergence.

The case of multipoint Pad\'e approximants is somewhat different, since choosing the interpolation points offers new possibilities to help convergence. However, it is not immediately clear how to use these additional parameters. The theory was initially developed for Markov functions ({\it i.e.} Cauchy transforms of positive measures compactly supported on the real line) showing that multipoint Pad\'e approximants converge locally uniformly on the complement of the smallest segment containing the support of the defining measure, provided the interpolation points are conjugate symmetric \cite{GL78}. The crux of the proof is the remarkable connection between rational interpolants and orthogonal polynomials: the denominator of the $n$-th diagonal multipoint  Pad\'e approximant is the $n$-th orthogonal polynomial of the measure defining the Markov function, weighted by the inverse of the polynomial whose zeros are the interpolation points (this polynomial is identically 1 for classical Pad\'e approximants). The conjugate symmetric distribution of the interpolation points is to the effect that the weight is positive, so one can apply the asymptotic theory of orthogonal polynomials with varying weights \cite{StahlTotik}. 

When trying to generalize this approach to more general Cauchy integrals than Markov functions, one is led to consider non-Hermitian orthogonal polynomials with respect to complex-valued measures on more general arcs than segments, and for a while it was unclear what could be hoped for. In the pathbreaking papers \cite{St85, St86, St89, St97}, devoted to the convergence \emph{in capacity} of classical Pad\'e approximants to functions with branchpoints, it was shown that such orthogonal polynomials lend themselves to analysis when the measure is supported on a system of arcs of minimal logarithmic capacity linking the branchpoints, in the complement of which the function is single-valued. Shortly after, the same type of convergence was established for multipoint Pad\'e approximants to Cauchy integrals of continuous (quasi-everywhere) non-vanishing densities over arcs of minimal {\em weighted capacity}, provided that the interpolation points asymptotically distribute like a measure whose potential is the logarithm of the weight \cite{GRakh87}. Such an extremal system of arcs is called  a {\it symmetric contour}, or {\em $S$-contour}, and is characterized by a symmetry property of the (two-sided) normal derivatives of its equilibrium potential. The corresponding condition on the distribution of the interpolation points may be viewed as a far-reaching generalization of the conjugate-symmetry with respect to the real line that was required to interpolate Markov functions in a convergent way.

After these works it became apparent that the appropriate class of Cauchy integrals for Pad\'e approximation should consist of those taken over $S$-contours, and that the interpolation points should distribute according to the weight that defines the symmetry property. However, it is not so easy to decide which systems of arcs are $S$-contours, since finding a weight making the arcs of smallest weighted capacity is a nontrivial inverse problem, and in any case convergence in capacity is much weaker than locally uniform convergence.

For the class of Jordan arcs, new ground  was recently broken in \cite{uBY3} where it is shown that such an arc, if rectifiable and Ahlfors regular at the endpoints, is an $S$-contour if and only if it is analytic. The proof recasts the $S$-property for Jordan arcs as the existence of a sequence of ``pseudo-rational'' functions, holomorphic and tending to zero off the arc, whose boundary values from each side of the latter remain bounded, and whose zeros remain at positive distance from the arc. There are in fact many such sequences that can be computed explicitly from an analytic parameterization of the arc. Then, translating the non-Hermitian orthogonality equation for the denominator into an integral equation involving Hankel operators and using compactness properties of the latter, the reference just quoted establishes that multipoint Pad\'e approximants to Cauchy transforms of Dini-continuous (essentially) non-vanishing densities with respect to the equilibrium distribution of the arc converge locally uniformly in its complement when the interpolation points are the zeros of these pseudo-rational functions.

Still the above result remains unsatisfactory, for the hypotheses entail that the density with respect to arclength in the integral goes to infinity towards the endpoints of the arc, since so does the equilibrium distribution. In particular, ultra-smooth situations like the one of Cauchy integrals of smooth functions over analytic arcs are not covered. The present paper develops a new technique to handle \emph{any} non-vanishing integrable Jacobi-type density under mild smoothness assumptions, thereby settling more or less  the issue of convergence in multipoint Pad\'e interpolation to functions defined as Cauchy integrals over analytic Jordan arcs.

We dwell on the Riemann-Hilbert approach to asymptotics of orthogonal polynomials with analytic weights, pioneered on the line in \cite{BlIt99,DKMLVZ99a} and carried over to the segment in \cite{KMcLVAV04}. We also elaborate on the $\bar\partial$-extension thereof, initiated on the line in \cite{McLM08} to relax the analyticity requirement. This will provide us with strong ({\it i.e.} Plancherel-Rotach type) asymptotics for the denominator polynomials of the multipoint Pad\'e interpolants we construct and for their associated functions of the second kind, from which the local uniform convergence we seek follows easily. It is interesting to note that the Riemann-Hilbert approach, which is typically a tool to obtain sharp quantitative asymptotics, is here used as a means to solve a qualitative question namely the convergence of the interpolants.

The interpolation points shall be the same as in \cite{uBY3}, namely the zeros of a sequence of pseudo-rational functions adapted to the arc. Such an interpolation scheme will prove convergent for all Cauchy integrals with sufficiently smooth density with respect to a Jacobi weight on the arc at the same time. This provides us with a varying weight  which is not of power type, nor in general converging sufficiently fast to a weight of power type to take advantage of the results of \cite{Ap02}, where the Riemann-Hilbert approach is adapted to non-Hermitian orthogonality  with analytic weights on smooth $S$-arcs. Instead, when ``opening the lens'', we set up a sequence of Riemann-Hilbert problems with \emph{varying contours} whose solutions converge to the desired one by properties of the pseudo rational functions. 

We pay special attention to keep smoothness requirements low, in order to obtain as general a result as the method permits. Roughly speaking, the higher the Jacobi exponents the smoother the density should be, see the precise assumptions \eqref{eq:ab}. When the Jacobi exponents are negative, only a fraction of a derivative is needed, which compares not too badly with the Dini-continuity assumption in \cite{uBY3}. In the present setting, however, the density cannot vanish whereas some weak vanishing is still allowed in \cite{uBY3}. We are of course rewarded here with stronger asymptotics.

As the varying part of our weight is analytic, the extension inside the lens with controlled $\bar\partial$-estimates, introduced in \cite{McLM08} for power weights, needs only deal with the density defining the Cauchy integral we interpolate. This step is treated using either tools from real analysis, {\it e.g.} Muckenhoupt weights and Sobolev traces, or else classical H\"older estimates for singular integrals, whichever yields the best results granted  the Jacobi exponents.

Since we consider analytic arcs only, it is natural to ask how general our results with respect to the general class of Cauchy integrals over rectifiable Jordan arcs. It turns out that they are as general as can be, because if the Cauchy integral of a nontrivial Jacobi weight can be interpolated in a convergent way with a triangular scheme of interpolation points that stay away from the arc, then the arc is in fact \emph{analytic} \cite{uBY1}.

The paper is organized as follows. Section~\ref{sec:st} fixes notation and defines   pseudo rational functions as well as multipoint Pad\'e approximants before stating the main results. In Section~\ref{sec:g}, the contours that will later be instrumental for the solution of the Riemann-Hilbert problem are introduced. Section~\ref{sec:extension} contains preliminaries on smooth extensions from boundary data in domains with polygonal boundaries. Section~\ref{sec:bvp} is devoted to key estimates of certain singular integral operators that play a  main role in the extension of the weight. Section~\ref{sec:rhpbp} and \ref{sec:aa} deal with the analytic Riemann-Hilbert problem, while Section~\ref{sec:pbp} solves the $\overline{\partial}$ version thereof. Finally, in Section~\ref{sec:solution}, we gather the material developed so far to establish the asymptotics and the convergence of multipoint Pad\'e approximants stated in Section~\ref{sec:st}.

\section{Statements of Results}
\label{sec:st}

Let $\Delta$ be a closed analytic Jordan \emph{arc} with endpoints $-1$ and $1$. That is to say, there exists a holomorphic univalent function $\Xi$, defined in some domain $D_\Xi\supset[-1,1]$, such that
\[
\Delta = \Xi([-1,1]), \quad \Xi(\pm1)=\pm1.
\]
We call $\Xi$ an analytic parameterization of $\Delta$. We orient $\Delta$ from $-1$ to $1$ and, according to this orientation, we distinguish the left and the right sides of $\Delta$ denoted by $\Delta^+$ and $\Delta^-$, respectively. It will be convenient to introduce two unbounded arcs, say, $\Delta_l$ and $\Delta_r$, that respectively connect $-\infty$ to $-1$ and 1 to $+\infty$, in such a manner that $\Delta_l\cup\Delta\cup\Delta_r$ is a smooth unbounded Jordan arc that coincides with the real line in some neighborhood of infinity. Define on $\Delta$ the Jacobi weight
\begin{equation}
\label{eq:wab}
w(z) = w(\alpha,\beta;z) :=(1-z)^\alpha(1+z)^\beta, \quad \alpha,\beta>-1,
\end{equation}
where we choose branches of $(1-z)^\alpha$ and $(1+z)^\beta$ that are holomorphic outside of $\Delta_r$ and $\Delta_l$, respectively, and assume value 1 at the origin. In particular, $w$ is analytic across $\Delta^\circ:=\Delta\setminus\{\pm1\}$. Further, set 
\begin{equation}
\label{eq:sr}
\sr(z):=\sqrt{z^2-1}, \quad \sr(z)/z\to1, \quad \mbox{as} \quad z\to\infty,
\end{equation}
to be a holomorphic branch of the square root outside of $\Delta$. Then 
\begin{equation}
\label{eq:map}
\map(z):=z+\sr(z), \quad z\in D:=\overline\C\setminus\Delta,
\end{equation}
is holomorphic in $D\setminus\{\infty\}$, has continuous boundary values $\map^\pm$ on $\Delta^\pm$, respectively, and satisfies
\begin{equation}
\label{eq:mappr}
\map^+\map^-=1 \quad \mbox{on} \quad \Delta \quad \mbox{and} \quad \map(z)/2z \to 1 \quad \mbox{as} \quad z\to\infty.
\end{equation}
It is immediate that $\map$ is inverse of the Joukovski transformation $\jt(z):=(z^2+1)/2z$, i.e., $\jt(\map(z))=z$, $z\in D$. Moreover, $\map$ maps $D$ conformally onto an unbounded domain whose  boundary is an analytic Jordan curve \cite[Sec. 3.1]{uBY3} which is symmetric with respect to the transformation $z\mapsto1/z$. In particular, $\map$ does not vanish.

\subsection{Symmetric Contours}

Multipoint Pad\'e approximants to a given function $f$ are defined to be rational interpolants to $f$. In this paper we are interested in those functions $f$ that can be expressed as Cauchy integrals of Jacobi-type complex densities defined on $\Delta$ (see the smoothness assumptions in \eqref{eq:measure} and \eqref{eq:ab}). In order for multipoint Pad\'e approximants to converge to such a function, it is necessary to choose the interpolation schemes appropriately with respect to $\Delta$. We presently characterize these schemes in terms of the associated monic polynomials vanishing at the interpolation points.

Let $\{v_n\}$ be a sequence polynomials such that $\deg(v_n)\le2n$ and each $v_n$ has no zeros on $\Delta$. To this sequence we associate a sequence of ``pseudo-rational'' functions, say $\{r_n\}$, given by
\begin{equation}
\label{eq:rn}
r_n(z) := \left(\frac{1}{\map(z)}\right)^{2n-\deg(v_n)}\prod_{\{e:v_n(e)=0\}}\frac{\map(z)-\map(e)}{1-\map(e)\map(z)}, \quad z\in D,
\end{equation}
where the product is taken over all zeros of $v_n$ according to their multiplicities. It is easy to see that each function $r_n$ is holomorphic in $D$, has the same zeros as $v_n$ counting multiplicities, and vanishes at infinity with order $2n-\deg(v_n)$. Hence, each $r_n$ has exactly 2n zeros counting multiplicities. Moreover, the unrestricted boundary values $r^\pm_n$ exist continuously from each side of $\Delta$ and satisfy $r^+_nr^-_n \equiv 1$ by the first part of \eqref{eq:mappr}.

Hereafter, the normalized counting measure of a finite set is the probability measure that has equal mass at each point counting multiplicities. Below, the weak$^*$ topology refers to the duality between complex measures and continuous functions with compact support in~$\overline\C$. 

\begin{df}
\label{df:S}
We say that a sequence of polynomials $\{v_n\}$ with no zeros on $\Delta$ belongs to the class $\vp(\Delta)$ if the following conditions hold:
\begin{itemize}
\item[(1)] the associated functions $r_n$ via \eqref{eq:rn} satisfy $|r_n^\pm| = O(1)$ uniformly on $\Delta$ and $r_n=o(1)$ locally uniformly in $D$;
\item[(2)] there exists a neighborhood of $\Delta$ that contains no zeros of $r_n$ for all $n$ large enough;
\item[(3)] the normalized counting measures of zeros of $r_n$ form a weak$^*$ convergent sequence.
\end{itemize}
\end{df}

The third requirement in the definition of $\vp(\Delta)$ is purely technical and is placed only to simplify the forthcoming considerations since one can always proceed with subsequences as far as convergence is concerned.

Regarding the nature of the class $\vp(\Delta)$, the following result was obtained in \cite[Thm. 1]{uBY3}. For a closed analytic Jordan arc $\Delta$, there always exist sequences $\{v_n\}$ belonging to $\vp(\Delta)$ and they can be constructed explicitly granted the parameterization $\Xi$. A partial converse is also true. Namely, let $\Delta$ be a rectifiable Jordan arc with endpoint $\pm1$ such that for $x=\pm1$ and all $t\in\Delta$ sufficiently close to $x$ it holds that $|\Delta_{t,x}|\leq\const|x-t|^\beta$, $\beta>1/2$, where $|\Delta_{t,x}|$ is the length of the subarc of $\Delta$ joining $t$ and $x$ and ``$\const$'' is an absolute constant. If there exists a sequence of polynomials $\{v_n\}$ meeting the first two requirements of Definition~\ref{df:S}, then $\Delta$ is necessarily analytic. The class $\vp(\Delta)$ is also intimately related to the so-called \emph{symmetry property} of the contour $\Delta$ \cite{St85,St85b,uBY3}. 

For our investigation we need to detail further the properties of the just defined interpolation schemes. We gather them in the following theorem. We agree that the arcs involved have endpoints $\pm1$. Moreover, we say that two holomorphic functions are analytic continuations of each other if they are defined on domains that have nonempty intersection on which the functions coincide.

\begin{thm}
\label{thm:sp} Let $\Delta$ be a closed analytic Jordan arc and $\{v_n\}\in\vp(\Delta)$. Then there exists a sequence of closed analytic Jordan arcs $\{\Delta_n\}$ such that:
\begin{itemize}
\item [(i)] there exist analytic parametrizations $\Xi_n$ of $\Delta_n$ and $\Xi$ of $\Delta$ such that the functions $\Xi_n$ converge to $\Xi$ uniformly in some neighborhood of $[-1,1]$ as $n\to\infty$;
\item [(ii)] for each function $r_n$, associated to $v_n$ via \eqref{eq:rn}, there exists an analytic continuation $r_n^*$, holomorphic in $D_n:=\overline\C\setminus\Delta_n$, such that $|(r_n^*)^\pm|\equiv1$ on $\Delta_n$.
\end{itemize} 
\end{thm}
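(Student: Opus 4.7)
The strategy is to lift the problem to the $w$-plane via $\map$. Writing $r_n=B_n\circ\map$ with
\[
B_n(w):=w^{-(2n-\deg v_n)}\prod_j\frac{w-\map(e_j)}{1-\map(e_j)\,w},
\]
a direct computation yields the functional identity $B_n(w)\,B_n(1/w)\equiv 1$; in particular $B_n(\pm1)=1$. The image $L:=\map^+(\Delta)\cup\map^-(\Delta)$ is an analytic Jordan curve through $\pm1$, invariant under $w\mapsto 1/w$, of which $L^+:=\map^+(\Delta)$ is the sub-arc from $-1$ to $+1$ corresponding to $\Delta$. By Definition~\ref{df:S}(2) and the identity, the zeros $c_j=\map(e_j)$ and poles $1/c_j$ of $B_n$ remain at positive distance from $L$ for all $n$ large, so $\log|B_n|$ is harmonic in a fixed neighborhood $V$ of $L$ independent of $n$. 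Definition~\ref{df:S}(1) gives $|B_n|\to 0$ uniformly on compact subsets of $\mathrm{ext}(L)$, whence by the identity $|B_n|\to\infty$ on compact subsets of $\mathrm{int}(L)\setminus\{0\}$, forcing the level set $\{|B_n|=1\}$ into any prescribed neighborhood of $L$ as $n\to\infty$.

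The first substantive step is to extract from this level set an analytic Jordan arc $\ell_n^+\subset V$ close to $L^+$ with endpoints $\pm 1$. Since $|B_n(\pm1)|=1$ and $B_n'(\pm1)\ne 0$ for $n$ large (using the uniform separation of the $c_j$ from $\pm1$), the implicit function theorem supplies local real-analytic level arcs through $\pm 1$. A topological argument in the annular neighborhood of $L$---the level set must separate $\{|B_n|<1\}$ from $\{|B_n|>1\}$ within $V$---identifies, for $n$ sufficiently large, a unique closed Jordan component of $\{|B_n|=1\}\cap V$ that hugs $L$ and passes through $\pm 1$; let $\ell_n^+$ be its sub-arc approximating $L^+$.

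To pull back to $z$-variables, I would extend $\map^+$ by Schwarz reflection across both analytic boundaries $\Delta$ and $L$, setting $\tilde\map:=\sigma_L\circ\map\circ\sigma_\Delta$ on the reflected side. This gives a holomorphic injection from a neighborhood $U$ of $\Delta$ onto a tubular neighborhood $V$ of $L^+$, whose inverse $\tilde\map^{-1}:V\to U$ is holomorphic throughout (in particular at $\pm 1$, where $z\mp1\sim\tfrac12(w\mp1)^2$). Define $\Delta_n:=\tilde\map^{-1}(\ell_n^+)$: this is an analytic Jordan arc from $-1$ to $+1$, and pulling back parametrizations of $\ell_n^+$ through the fixed $\tilde\map^{-1}$ yields parametrizations $\Xi_n$ of $\Delta_n$ converging uniformly to $\Xi$ near $[-1,1]$, giving (i). For (ii), define $r_n^*$ on $D_n=\overline\C\setminus\Delta_n$ piecewise: equal to $r_n$ on the unbounded component of $D_n\setminus\Delta$, and equal to $1/r_n$ on the bounded lens between $\Delta$ and $\Delta_n$. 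The relation $r_n^+r_n^-\equiv 1$ on $\Delta^\circ$---the lift of $B_n(w)B_n(1/w)=1$ via $\map^+\map^-=1$---makes the two prescriptions agree continuously (hence holomorphically, by Morera) across $\Delta^\circ\subset D_n$, yielding a single-valued analytic continuation of $r_n$. On $\Delta_n$ the two boundary values of $r_n^*$ are $r_n$ and $1/r_n$; both have modulus $1$ since $\Delta_n\subset\{|r_n|=1\}$ by construction.

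The principal obstacle is the level-set analysis of the second paragraph: establishing that a single smooth analytic Jordan component of $\{|B_n|=1\}$ threads $V$ close to $L$ and contains $\pm 1$, without cusps or self-crossings. This demands ruling out zeros of $B_n'$ on that component---so the implicit function theorem globalizes along it, in particular at the endpoints---and a topological uniqueness argument for the separating component. Both rest on the quantitative uniform separation of the $c_j$ from $L$ provided by Definition~\ref{df:S}(2).
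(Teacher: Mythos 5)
Your proposal works directly with $r_n$ (equivalently, with your $B_n$ in the $w$-plane), whereas the paper passes to the $2n$-th root $\Phi_n := r_n^{-1/(2n)}$ and to $g_n := \log\Phi_n$, $g_n^2$. This normalization is not cosmetic. On $\Delta$, Definition~\ref{df:S}(1) gives only $|r_n^\pm| = O(1)$ with $|r_n^+||r_n^-|=1$, so $|B_n|$ on $L$ is pinched between $1/C$ and $C$ but need not tend to $1$; there is consequently no reason a priori that $\{|B_n|=1\}$ converges to $L$ in any parametrized sense. By contrast $|\Phi_n^\pm|=|r_n^\pm|^{-1/(2n)}\to1$ uniformly. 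The paper then uses Definition~\ref{df:S}(3)---which you never invoke---to show $\Phi_n\to\Phi$ uniformly, where $\Phi$ is the conformal map built from the Green potential $V_D^\nu$; the univalence of $g^2=(\log\Phi)^2$ propagates to $g_n^2$ (and $\widetilde g_n^2$) by a Hurwitz-type argument, and $\Delta_n$ is then defined as the preimage of the negative real ray under these two univalent charts. Univalence delivers in one stroke everything you need but cannot extract from Definition~\ref{df:S}(2) alone: the level arc is a single analytic Jordan arc with endpoints exactly $\pm 1$ and no critical points, and the parametrizations $\Xi_n=(\jt\circ\Phi_n^*)^{-1}$ converge uniformly to $\Xi=(\jt\circ\Phi)^{-1}$.

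The ``principal obstacle'' you flag is therefore a genuine gap, and the resolution you sketch is insufficient. Separation of the zeros $c_j$ from $L$ (Definition~\ref{df:S}(2)) ensures $B_n$ is analytic and zero-free near $L$, but says nothing about zeros of $B_n'$; excluding those from the relevant component of $\{|B_n|=1\}$, and in particular verifying $B_n'(\pm1)\ne0$ so the local implicit-function arcs at $\pm1$ globalize, is precisely what the convergence $g_n^2\to g^2$ together with univalence of the limit provides. A second, independent gap: even granting a nice Jordan arc in $\{|B_n|=1\}$ close to $L^+$, your argument yields only Hausdorff-type proximity, whereas (i) demands uniform convergence of \emph{analytic parametrizations}---in the paper this comes from the locally uniform convergence $\jt\circ\Phi_n^*\to\jt\circ\Phi$, again resting on Definition~\ref{df:S}(3). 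Your treatment of (ii) via piecewise gluing using $r_n^+r_n^-\equiv1$ is sound in spirit; the paper's formulation---$r_n^*$ defined by \eqref{eq:rn} with $\map$ replaced by $\map_n$---is equivalent and avoids case analysis when $\Delta_n$ crosses $\Delta$ at interior points.
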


Let $\sr_n$ and $\map_n$ be defined relative to $\Delta_n$ as $\sr$ and $\map$ were defined in \eqref{eq:sr} and \eqref{eq:map} relative to $\Delta$. Clearly, $\sr_n$ and $\map_n$ are analytic continuations of $\sr$ and $\map$ to $D_n$. In fact, $r_n^*$ is simply the function associated to $v_n$ via \eqref{eq:rn} with $\map$ replaced by $\map_n$. It is apparent that $r_n^*$ is nothing  but the Blaschke product with respect to $D_n$ that has the same zeros as $r_n$.

\subsection{Multipoint Pad\'e Approximation}

Let $\mu$ be a complex Borel measure with compact support. We define the Cauchy transform of $\mu$ as
\begin{equation}
\label{eq:CauchyT}
f_\mu(z) := \int\frac{d\mu(t)}{z-t}, \quad z\in\overline\C\setminus\supp(\mu).
\end{equation}
Clearly, $f_\mu$ is a holomorphic function in $\overline\C\setminus\supp(\mu)$ that vanishes at infinity.

Classically, diagonal (multipoint) Pad\'e approximants to $f_\mu$ are rational functions of type $(n,n)$ that interpolate $f_\mu$ at a prescribed system of $2n+1$ points. However, when the approximated function is of the from (\ref{eq:CauchyT}), it is customary to place at least one interpolation point at infinity so as to let the approximants vanish at infinity as well by construction.

\begin{df}
\label{df:pade}
Let $f_\mu$ be given by \eqref{eq:CauchyT} and $\{v_n\}$ be a sequence of monic polynomials, $\deg(v_n)\leq2n$, with zeros in $\overline\C\setminus\supp(\mu)$. The $n$-th diagonal Pad\'e approximant to $f_\mu$ associated with $\{v_n\}$ is the unique rational function $\Pi_n=p_n/q_n$ satisfying:
\begin{itemize}
\item $\deg p_n\leq n$, $\deg q_n\leq n$, and $q_n\not\equiv0$; \smallskip
\item $\left(q_n(z)f_\mu(z)-p_n(z)\right)/v_n(z)$ is analytic in $\overline\C\setminus\supp(\mu)$; \smallskip
\item $\left(q_n(z)f_\mu(z)-p_n(z)\right)/v_n(z)=O\left(1/z^{n+1}\right)$ as $z\to\infty$.
\end{itemize}
\end{df}

A multipoint Pad\'e approximant always exists since the conditions for $p_n$ and $q_n$ amount to solving a system of $2n+1$ homogeneous linear equations with $2n+2$ unknown coefficients, no solution of which can be such that $q_n\equiv0$ (we may thus assume that $q_n$ is monic); note that the required interpolation at infinity is entailed by the last condition and therefore $\Pi_n$ is, in fact, of type $(n-1,n)$.

We consider only absolutely continuous measures that are supported on $\Delta$ and whose densities are Jacobi weights \eqref{eq:wab} multiplied by suitably smooth non-vanishing functions. This leads us to define smoothness classes $\cf^{m,\varsigma}$.

\begin{df}
\label{df:holder}
Let $K$ be an infinitely smooth closed Jordan arc or curve. We say that $\theta\in\cf^{m,\varsigma}(K)$ if $\theta$ is $m$-times continuously differentiable on $K$ with respect to the arclength and its $m$-th derivative is uniformly H\"older continuous with exponent $\varsigma$, i.e.,
\[
|\theta^{(m)}(t_1)-\theta^{(m)}(t_2)| \leq \const|t_1-t_2|^{\varsigma}, \quad t_1,t_2\in K.
\]
When $K=\Delta$, we simply write $\cf^{m,\varsigma}$ instead of $\cf^{m,\varsigma}(\Delta)$. We also write $\cf^\infty(K)$ for the space of infinitely differentiable functions on $K$.
\end{df}

Together with $\cf^{m,\varsigma}$, we also consider fractional Sobolev spaces.

\begin{df}
\label{df:sobolev}
Let $K$ be an infinitely smooth Jordan arc or curve. We say that $\theta\in\sof^{1-1/p}_p(K)$, $p\in(1,\infty)$, if
\[
\iint_{K\times K}\left|\frac{\theta(x)-\theta(y)}{x-y}\right|^p|dx||dy|<\infty.
\]
When $K=\Delta$, we simply write $\sof^{1-1/p}_p$ instead of $\sof^{1-1/p}_p(K)$.
\end{df}

We shall be interested only in the case $p\in(2,\infty)$ since in this range it holds that
\begin{equation}
\label{eq:simb}
\sof_p^{1-1/p} \subset \cf^{0,\varsigma}, \quad \varsigma = 1-\frac2p, \quad p\in(2,\infty).
\end{equation}
by Sobolev imbedding theorem (see  Section \ref{ss:sb}).

In what follows, we assume that the measure $\mu$ in \eqref{eq:CauchyT} is of the form
\begin{equation}
\label{eq:measure}
d\mu(t) = (wh)(t)dt, \quad h(t)=e^{\theta(t)}, \quad t\in\Delta,
\end{equation}
where the Jacobi weight $w=w(\alpha,\beta;\cdot)$ and the complex function $\theta$ are such that
\begin{equation}
\label{eq:ab}
\alpha,\beta \in (-s,s)\cap(-1,\infty)
\end{equation}
with
\[
s: =\left\{\begin{array}{lll}
1-\frac2p, & \mbox{if} \quad \theta\in\sof_p^{1-1/p}, & p\in(2,\infty) , \smallskip \\
2\varsigma-1, & \mbox{if} \quad \theta\in\cf^{0,\varsigma}, & \varsigma\in\left(\frac12,1\right], \smallskip \\
m+\varsigma, & \mbox{if} \quad \theta\in\cf^{m,\varsigma}, & m\in\N, \quad \varsigma\in(0,1].
\end{array}
\right.
\]
 
To describe the asymptotic behavior of the approximation error to functions $f_\mu$ by the multipoint Pad\'e approximants, we need to introduce complex geometric means and Szeg\H{o} functions. The {\it geometric mean} of $h=e^\theta$ is given by
\begin{equation}
\label{eq:geommean}
\gm_h := \exp\left\{\int\theta d\ed\right\}, \quad d\ed(t) := {\frac{idt}{\pi\sr^+(t)}}, \quad t\in\Delta.
\end{equation}
The measure $\ed$ is, in some sense, natural for the considered problem as suggested by the forthcoming Theorem~\ref{thm:pade}. Observe also that $\ed$ simply becomes the normalized arcsine distribution on $\Delta$ when $\Delta=[-1,1]$.  As $\int d\ed=1$, $\gm_h$ depends only on $h$ and is non-zero when  $\theta$ is H\"older continuous (see Section \ref{subsec:szego}). Moreover, in this case the \emph{Szeg\H{o} function} of $h$, given by
\begin{equation}
\label{eq:szf}
\szf_h(z) := \exp\left\{\frac{\sr(z)}{2}\int\frac{\theta(t)}{z-t}d\ed(t)-\frac12\int\theta d\ed\right\}, \quad z\in D,
\end{equation}
is the unique non-vanishing holomorphic function in $D$ that has continuous unrestricted boundary values on $\Delta$ from each side and satisfies
\begin{equation}
\label{eq:szego}
h = \gm_h\szf_h^+\szf_h^- \quad \mbox{on} \quad \Delta \quad \mbox{and} \quad \szf_h(\infty)=1.
\end{equation}
The main result of the paper is the following theorem.

\begin{thm}
\label{thm:pade}
Let $\Delta$ be a closed analytic Jordan arc connecting $\pm1$ and $\{v_n\}\in\vp(\Delta)$. Let also $f_\mu$ a Cauchy integral \eqref{eq:CauchyT} with $\mu$ given by \eqref{eq:measure} and \eqref{eq:ab}. Then $\{\Pi_n\}$, the sequence of diagonal multipoint Pad\'e approximants to $f_\mu$ associated with $\{v_n\}$, is such that
\[
(f_\mu-\Pi_n)\sr = \left[2\gm_{\dot\mu} + o(1)\right]\szf^2_{\dot\mu}\;r_n,
\]
with $o(1)$ satisfying
\begin{equation}
\label{eq:deltane}
|o(1)| \leq \frac{\const}{n^a}, \quad
a\in\left\{
\begin{array}{ll}
\left(0,\frac{s-\max\{|\alpha|,|\beta|\}}{2}\right), & s-\max\{|\alpha|,|\beta|\}\leq 1, \smallskip \\
\left(0,\frac12\right), & s-\max\{|\alpha|,|\beta|\}>1,
\end{array}
\right. 
\end{equation}
locally uniformly in $D$, where the constant $\const$ depends on $a$, $d\mu=\dot\mu d\ed$, i.e., $\dot\mu=-i\pi wh\sr^+$, and the functions $r_n$ are associated to the polynomials $v_n$ via \eqref{eq:rn} and hence converge to zero geometrically fast in $D$.
\end{thm}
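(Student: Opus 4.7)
The plan is to extract strong asymptotics of the denominator $q_n$ and its associated function of the second kind
\[
q_n^{(2)}(z):=\int_\Delta \frac{q_n(t)\,w(t)h(t)}{(z-t)v_n(t)}\,dt
\]
by the Riemann--Hilbert (RH) steepest-descent method of Deift--Zhou, augmented by the $\bar\partial$-relaxation of McLaughlin--Miller in order to accommodate the finite smoothness of $h$. Recall that $q_n$ satisfies the non-Hermitian orthogonality $\int_\Delta t^kq_n(wh/v_n)dt=0$, $k=0,\dots,n-1$, and that the error $f_\mu-\Pi_n$ is expressible through $q_n^{(2)}/q_n$; thus pointwise control of these two quantities off $\Delta$ will yield the stated asymptotic formula.

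I would encode $q_n$ and $q_n^{(2)}$ in the standard Fokas--Its--Kitaev matrix $Y_n$ with upper-triangular jump on $\Delta$ having off-diagonal entry $wh/v_n$. The first conjugation normalizes behavior at infinity using the $g$-function of the equilibrium distribution $\ed$, with the crucial twist that $r_n$ replaces the more familiar $\map^{-2n}$-type factor: because $r_n^+r_n^-=1$ on $\Delta$ and $r_n\to0$ locally uniformly in $D$, the transformed jump factorizes into lower/diagonal/upper triangular pieces in the customary manner. In place of opening the lens along a small neighborhood of $\Delta$ where $r_n$ is not yet small, I would deform the lens boundaries to the varying arcs $\Delta_n$ furnished by Theorem~\ref{thm:sp}. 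There the analytic continuation $r_n^*$ is unimodular, so the off-diagonal entries of the resulting lens-jumps are $O(|r_n|)$ and tend to $I$ uniformly with geometric rate.

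Since $h$ is only finitely smooth, the upper/lower factors $wh$ cannot be extended analytically into the lens; instead, using the machinery of Sections~\ref{sec:extension}--\ref{sec:pbp}, I would build a $\bar\partial$-extension $H$ of $h$ in a polygonal strip about $\Delta$ whose $\bar\partial H$ is controlled in norms dictated by the smoothness class of $\theta$ ($\sof_p^{1-1/p}$, H\"older, or $\cf^{m,\varsigma}$). Plugging $H$ for $h$ converts the jump problem on the lens into a mixed RH--$\bar\partial$ problem. Away from $\pm1$ the global parametrix is built from the Szeg\H{o} function $\szf_{\dot\mu}$, the conformal map $\map$, and the mean $\gm_{\dot\mu}$; near the endpoints I would paste in the classical Bessel parametrices calibrated to the exponents $\alpha,\beta$, on disks of shrinking radius (so as to remain inside the analyticity neighborhood where $r_n^*$ lives). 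The residual matrix $R_n$ then solves a small-norm RH--$\bar\partial$ problem whose RH part is concentrated on the endpoint-disk boundaries and on the $\Delta_n$-lens contour, and whose $\bar\partial$ part is concentrated in the lens interior.

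The main obstacle is the quantitative balance in this last step. Near the endpoints $\pm1$ the Jacobi weights become singular, forcing the local parametrix disks to shrink and thereby worsening the matching error on their boundaries; simultaneously, the $L^2$-norm of $\bar\partial H$ weighted by $w$ must remain under control, which is stringent precisely when $\theta$ has low smoothness. The quantity $s-\max\{|\alpha|,|\beta|\}$ in \eqref{eq:deltane} is exactly the smoothness margin that survives the Jacobi exponents, and the admissible $a$ emerges from optimizing the disk radii against the $\bar\partial$-error via Cauchy--Beurling estimates. Once $R_n=I+O(n^{-a})$ is established, undoing all transformations reconstructs $q_n$ and $q_n^{(2)}$ and, upon substitution into the formula for $f_\mu-\Pi_n$, isolates the leading factor $2\gm_{\dot\mu}\,\szf^2_{\dot\mu}\,r_n/\sr$, matching the statement.
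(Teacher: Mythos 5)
Your overall architecture is the right one and matches the paper: Fokas--Its--Kitaev matrix problem for the non-Hermitian orthogonal polynomial $q_n$ and its second-kind function, normalization at infinity using a Szeg\H{o}-type conjugation in which $r_n$ plays the role of $\map^{-2n}$, opening the lens, a $\bar\partial$-extension of the non-analytic part of the weight, Bessel local parametrices at $\pm1$, and a small-norm residual analysis. The crucial twist you correctly identify is that the lens must be adapted to the varying sequence $\{r_n\}$ via the arcs supplied by Theorem~\ref{thm:sp}, on which $r_n^*$ is unimodular, so that the lip jumps tend to $\mathscr{I}$ at a geometric rate. One small imprecision: $\Delta_n$ is the \emph{middle} contour of the modified lens $\Sigma_n^{md}$, where the off-diagonal oscillatory jump lives, while the lens \emph{lips} $\Delta_{n\pm}$ are separate arcs (level curves of $g_n^2$, $\widetilde g_n^2$) at positive distance from $\Delta$; but this does not affect the substance of your plan.

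Where the proposal goes genuinely wrong is in the final quantitative step. You attribute the rate $n^{-a}$ in \eqref{eq:deltane} to shrinking local parametrix disks, with the exponent emerging from balancing the shrinking-disk matching error against a weighted $L^2$ bound on $\bar\partial H$. In the paper the disks $U_\delta$, $\widetilde U_\delta$ are \emph{fixed}, and the matching on their boundaries gives the standard $\mathscr{P}\mathscr{N}^{-1}=\mathscr{I}+O(1/n)$ (Lemma~\ref{lem:rhr}), which is already better than the stated $n^{-a}$. The bottleneck is entirely the $\bar\partial$-problem \dbd: the rate comes from the operator-norm estimate
$\|\ok_\mathscr{W}\|\lesssim \|r_n\|_{q,\Omega}\sim n^{-1/q}$
in Lemma~\ref{lem:deltabar}, where $q$ is constrained below by requiring $f_{\alpha,\beta}f$ (the product of the endpoint singularity forced by the Jacobi exponents and the $\bar\partial$-derivative of the extension of $c$) to lie in a dual $L^{q'}$-class, so that H\"older's inequality applies. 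It is here that the margin $s-\max\{|\alpha|,|\beta|\}$ enters, and the factor $n^{-1/q}$ itself arises from integrating $|r_n|^q$ over the lens $\Omega$: $r_n$ is bounded near $\Delta$ but decays geometrically away from it, and the balance between the thin layer near $\Delta$ and the exponential decay yields the power of $n$, not any optimization of disk radii. If you actually tried shrinking disks you would lose the clean $O(1/n)$ matching and would not recover the theorem's exponent; the correct mechanism is the $L^q$ small-norm estimate for $\ok_\mathscr{W}$ together with the geometric decay of $r_n$ in the bulk of the lens.

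A secondary, less serious point: the paper does not $\bar\partial$-extend $h$ itself but rather the ratio $c^\pm=\szf_h^\pm/\szf_h^\mp$ appearing in the renormalized jump (Sections~\ref{sec:bvp}--\ref{sec:rhpbp}); the smoothness transfer from $\theta$ to $c^\pm$ is nontrivial and occupies Propositions~\ref{lem:smooth1}--\ref{lem:smooth2}. Your phrasing suggests extending $h$ directly, which would force you to re-derive these estimates anyway.
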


The convergence theory of Pad\'e approximants to Cauchy integrals is strongly interwoven with asymptotic behavior of underlying orthogonal polynomials that are the denominators of $\Pi_n$. In fact, it is easy to show that $f_\mu-\Pi_n=R_n/q_n$, where $R_n$ is a function associated to $q_n$ via \eqref{eq:skind}, and $q_n$ satisfies the orthogonality relations of the form \eqref{eq:ortho}, \eqref{eq:varyingweight}  (see, for example, \cite[Thm.~4]{uBY3}). Hence, Theorem~\ref{thm:pade} follows from Theorem~\ref{thm:sa} below.

\subsection{Strong Asymptotics for non-Hermitian Orthogonal Polynomials}

In this section we investigate the asymptotic behavior of polynomials satisfying non-Hermitian orthogonality relations of the form
\begin{equation}
\label{eq:ortho}
\int_\Delta t^jq_n(t)w_n(t)dt=0, \quad j\in\{0,\ldots,n-1\},
\end{equation}
together with the asymptotic behavior of their functions of the second kind, i.e.,
\begin{equation}
\label{eq:skind}
R_n(z) := R_n(q_n;z) = \int_\Delta\frac{q_n(t)w_n(t)}{t-z}\frac{dt}{\pi i}, \quad z\in D,
\end{equation}
where $\{w_n\}$ is the sequence of varying weights specified in \eqref{eq:varyingweight}.

\begin{thm}
\label{thm:sa}
Let $\{q_n\}$, $\deg(q_n)\leq n$, be a sequence of polynomials satisfying orthogonality relations \eqref{eq:ortho} with weights given by
\begin{equation}
\label{eq:varyingweight}
w_n:=\frac{wh_nh}{v_n}, \quad h=e^{\theta}, \quad h_n=e^{\theta_n},
\end{equation}
where $\theta$ and $w=w(\alpha,\beta;\cdot)$ are as in \eqref{eq:ab}, $\{\theta_n\}$ is a normal family in some neighborhood of $\Delta$, and $\{v_n\}\in \vp(\Delta)$. Then, for all $n$ large enough, the polynomials $q_n$ have exact degree $n$ and therefore can be normalized to be monic. Under such a normalization, we have that
\begin{equation}
\label{eq:sa1}
\left\{
\begin{array}{lll}
q_n  &=& [1+o(1)]/\szf_n \smallskip \\
R_n\sr &=& [1+o(1)]\gamma_n\szf_n
\end{array}
\right.
\end{equation}
with $o(1)$ satsfying \eqref{eq:deltane} locally uniformly in $D$, where 
\begin{equation}
\label{eq:sngamman}
\szf_n := (2/\map)^n\szf_{w_n\sr^+}, \quad \gamma_n:=2^{1-2n}\gm_{w_n\sr^+}, 
\end{equation}
and $R_n$ was defined in \eqref{eq:skind}. Moreover, it holds that
\begin{equation}
\label{eq:sa2}
\left\{
\begin{array}{lll}
q_n &=&  [1+o(1)]/\szf_n^+ + [1+o(1)]/\szf_n^- \smallskip \\
(R_n\sr)^\pm &=& [1+o(1)]~\gamma_n\szf_n^\pm
\end{array}
\right.,
\end{equation}
where $o(1)$ satisfies \eqref{eq:deltane} locally uniformly in $\Delta^\circ$.
\end{thm}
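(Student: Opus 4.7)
The plan is to apply the nonlinear steepest-descent method of Deift-Zhou, augmented with the $\bar\partial$-extension of \cite{McLM08} to handle the non-analytic density $h=e^\theta$, and adapted to the varying contours $\Delta_n$ supplied by Theorem~\ref{thm:sp}. First I would encode \eqref{eq:ortho}--\eqref{eq:skind} in the Fokas-Its-Kitaev matrix $Y$: a $2\times 2$ matrix whose first row is proportional to $(q_n,R_n)$ and which solves a Riemann-Hilbert problem on $\Delta$ with jump $\bigl(\begin{smallmatrix}1 & w_n \\ 0 & 1\end{smallmatrix}\bigr)$, endpoint behavior dictated by the Jacobi exponents through \eqref{eq:ab}, and $Y(z)z^{-n\sigma_3}\to I$ at infinity. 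I would then normalize $T:=2^{-n\sigma_3}Y(\map/2)^{n\sigma_3}$; by \eqref{eq:mappr} the diagonal part of the jump of $T$ has unimodular entries, while the off-diagonal part carries $w_n(\map^+/2)^{-2n}$. Identity \eqref{eq:rn} shows that $\map^{-2n}/v_n$ equals $r_n$ times an explicit analytic factor, so this entry is $O(1)$ and, up to bounded analytic data, involves only the weight factors $w$, $h$ and $h_n$.

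Next I would open the lens. The algebraic factor $w/v_n$ is analytic across $\Delta^\circ$, and by Theorem~\ref{thm:sp}(ii) the pseudo-rational $r_n$ extends analytically to $D_n$; the lens boundaries may thus be taken \emph{on} $\Delta_n$, where the off-diagonal entries introduced by the triangular factorization decay as $n\to\infty$ thanks to $r_n^*=o(1)$ off a neighborhood of $\Delta$ (Definition~\ref{df:S}). The remaining obstruction is $h=e^\theta$, for which only the mild regularity \eqref{eq:ab} is assumed; here I invoke the controlled $\bar\partial$-extension developed in Sections~\ref{sec:extension}--\ref{sec:bvp}, producing a $\tilde h$ that agrees with $h$ on $\Delta^\circ$ and whose $\bar\partial\tilde h$ has $L^p$-norm governed by the smoothness index $s$. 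For the model problem, the global Szeg\H{o} parametrix is built from $\szf_{w_n\sr^+}$ and the algebraic function $((z-1)/(z+1))^{1/4}$; inverting the normalizations reproduces exactly the prefactors $1/\szf_n$ and $\gamma_n\szf_n$ of \eqref{eq:sa1}. Near each endpoint $\pm1$ I would insert the Bessel parametrix of \cite{KMcLVAV04} tailored to the exponents $\alpha,\beta$, matched to the global model on small circles with discrepancy of order $n^{-(1-\max\{|\alpha|,|\beta|\})}$.

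Finally, set $R:=T\cdot(\mathrm{parametrix})^{-1}$. It satisfies a combined RH-$\bar\partial$ problem whose jumps on the lens and on the endpoint circles are close to the identity, whose $\bar\partial$-source is supported in the lens with $L^p$-norm controlled by $\bar\partial\tilde h$, and which tends to $I$ at infinity. The small-norm theory for RH-$\bar\partial$ systems developed in Sections~\ref{sec:rhpbp}--\ref{sec:pbp} then yields $R=I+o(1)$ locally uniformly in $D$ with the rate \eqref{eq:deltane}: the exponent $(s-\max\{|\alpha|,|\beta|\})/2$ comes from trading the endpoint Bessel-matching error against the $\bar\partial$-error, and it saturates at $1/2$ once $s-\max\{|\alpha|,|\beta|\}>1$. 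Unwinding the chain $R\mapsto T\mapsto Y$ recovers \eqref{eq:sa1}, and taking unrestricted boundary values from each side of $\Delta^\circ$ produces \eqref{eq:sa2}. I expect the main technical obstacle to be the quantitative matching in this last step: one must simultaneously control how the varying-contour construction, the Bessel local parametrix, and the $\bar\partial$-extension combine in the error, so as to produce exactly the threshold $s>\max\{|\alpha|,|\beta|\}$ demanded in \eqref{eq:ab}, with no loss beyond what \eqref{eq:deltane} allows.
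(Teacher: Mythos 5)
Your high-level strategy is the paper's: encode $(q_n,R_n)$ in the Fokas--Its--Kitaev matrix, normalize, open a lens adapted to the varying arcs $\Delta_n$ from Theorem~\ref{thm:sp}, control the non-analytic part via a $\bar\partial$-extension, build Szeg\H{o} and Bessel parametrices, and close with a small-norm estimate. Two details, however, are not merely fuzzy but point in the wrong direction.

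First, the object the paper extends with controlled $\bar\partial$-derivative is \emph{not} $h=e^\theta$ but the Szeg\H{o} quotient $c=\szf_h^+/\szf_h^-$. The renormalization $\mathscr{T}=(2^n\epsilon_n)^{\sigma_3}\mathscr{Y}E_n^{-\sigma_3}$ with $E_n=\epsilon_n\map^n\szf_{(v_n/hh_n)}$ folds the entire Szeg\H{o} function into the normalization at once, so the jump of $\mathscr{T}$ has $(r_nc_nc)^\pm$ on the diagonal and the analytic $w$ off-diagonal; after opening the lens the lip jumps carry $r_nc_nc/w$, and it is $c$ that must be continued off $\Delta$. This is the whole reason Section~\ref{sec:bvp} is devoted to the singular integral $\sr^\pm\os(\theta/\sr^+)=\log c^\pm$: Propositions~\ref{lem:smooth1} and~\ref{lem:smooth2} produce the decomposition $\sr^\pm\os(\theta/\sr^+)=\pm d+\sr^\pm\ell$ with $d$ vanishing at $\pm1$ (to all available orders), which is exactly what the corner trace Propositions~\ref{pr:corners1} and~\ref{pr:corners2} require. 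Extending $h$ itself does not automatically give this endpoint structure, and your proposal is internally inconsistent on this point, since you also plan to build the global parametrix out of $\szf_{w_n\sr^+}$ --- once the Szeg\H{o} factors are present in the parametrix, the residual jump the extension must accommodate is $c$, not $h$.

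Second, your explanation of the rate in \eqref{eq:deltane} is wrong. The Bessel matching error in \rhp(c) and \rhpt(c) is $O(1/n)$ uniformly in $\alpha,\beta$ (it comes from the $O(\zeta^{-1/2})$ term in \rhpsi(c) with $\zeta=n^2g_n^2/4$), not $n^{-(1-\max\{|\alpha|,|\beta|\})}$, so there is no ``trading'' between the endpoint parametrix and the $\bar\partial$-error: the analytic piece $\mathscr{R}$ already satisfies $\mathscr{R}=\mathscr{I}+O(1/n)$. The entire exponent $a$ comes from the $\bar\partial$-problem. In Lemma~\ref{lem:deltabar} the $\bar\partial$-source $\mathscr{W}=\mathscr{A}\mathscr{W}_0\mathscr{A}^{-1}$ is bounded by $|r_nf_{\alpha,\beta}f|$, where $f_{\alpha,\beta}$ records the blow-up $|1-z^2|^{-\max\{|\alpha|,|\beta|\}}$ of the (non-normalized) parametrix near $\pm1$ and $f$ carries the $\lf^p$ or H\"older control of $\bar\partial c$; H\"older's inequality then forces the operator $\ok_\mathscr{W}$ on $\lf^\infty$ to be controlled by $\|r_n\|_{q,\Omega}$ for $q>2/(s-\max\{|\alpha|,|\beta|\})$, and the Blaschke estimate $\|r_n\|_{q,\Omega}\lesssim n^{-1/q}$ yields $a<\tfrac12(s-\max\{|\alpha|,|\beta|\})$, saturating at $a<\tfrac12$. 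The requirement $s>\max\{|\alpha|,|\beta|\}$ in \eqref{eq:ab} is precisely the integrability threshold for $f_{\alpha,\beta}f$; it has nothing to do with the Bessel parametrix.

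One smaller structural remark: the paper follows McLaughlin--Miller in splitting the combined RHP--$\bar\partial$ problem \rhds~ into a pure RHP \rha~ (solved via \rhb, \rhn, \rhp, \rhpt, \rhr) and a pure $\bar\partial$-problem \dbd~ for $\mathscr{D}$, with $\mathscr{S}=\mathscr{D}\mathscr{A}$, rather than treating the combined $R=T\cdot(\text{parametrix})^{-1}$ as one object; the analytic RHP is then deformed to $\Sigma_n^{md}$ (with middle arc $\Delta_n$ rather than $\Delta$), where $|(r_n^*)^\pm|\equiv1$ makes the jump unimodular. These are the levers that make the varying-contour and $\bar\partial$-extension arguments mesh; your sketch gestures at them but the quantitative heart is the $\bar\partial$-operator norm bound, and it is this that your proposal misattributes.
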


The method of proof can also be used to derive the asymptotics of $q_n$ and $R_n$ around $\pm1$ as was done in \cite{KMcLVAV04}. However, the corresponding calculations are lengthy and do not impinge on the convergence of Pad\'e interpolants proper, which is why the authors decided to omit them here.

The appearance of the normal family $\{\theta_n\}$ in \eqref{eq:varyingweight} is not necessitated by Theorem~\ref{thm:pade} but is included for possible application to meromorphic approximation \cite{Y10}.

\section{Proof of Theorem \ref{thm:sp} and $g$-Functions}
\label{sec:g}

In this section we prove Theorem \ref{thm:sp}. The notion of $g$-function, which we introduce along the way, will be needed later on for the proof of Theorem \ref{thm:sa}.

\subsection{Parameterization $\Xi$, functions $g$ and $\widetilde g$}
\label{subsec:g}

Let $\{v_n\}\in\vp(\Delta)$ and $r_n$ be associated to $v_n$ by \eqref{eq:rn}. As required by Definition \ref{df:S}-(2) and (3), the normalized counting measures of the zeros of $r_n$ converge weak$^*$ to a Borel measure $\nu$, $\supp(\nu)\subset D$. Denote by $V_D^\nu$ the \emph{Green potential} of this measure with respect to $D$. It was shown in the course of the proof of \cite[Thm. 1, see (4.34)]{uBY3} that Definition~\ref{df:S}-(1) yields
\begin{equation}
\label{eq:vd}
V_D^\nu(z) = -\int\log\left|\frac{\map(z)-\map(t)}{1-\map(z)\map(t)}\right|d\nu(t).
\end{equation}
In other words, the Green kernel $-\log\left|\frac{\phi(z)-\phi(t)}{1-\overline{\phi(t)}\phi(z)}\right|$, where $\phi$ is the conformal map of $D$ onto $\{|z|>1\}$ such that $\phi(\infty)=\infty$ and $\phi^\prime(\infty)>0$, can be replaced by the one in \eqref{eq:vd} for this special measure $\nu$.

The Green potential $V_D^\nu$ is a positive harmonic function in $D\setminus\supp(\nu)$ whose boundary values vanish everywhere on $\Delta$. Let $L_\rho:=\{z:~V_D^\nu(z)=\log\rho\}$, $\rho>1$, be a level line of $V_D^\nu$ in $\Xi(D_\Xi)$, the range of $\Xi$. Without loss of generality we may assume that $\rho$ is a regular value and therefore $L_\rho$ is a smooth Jordan curve encompassing $\Delta=L_1$. Denote by $O$ the domain bounded by $L_\rho$ and $\Delta$. Set
\begin{equation}
\label{eq:zbarzpartials}
\partial f := \frac12\left(\partial_x f - i\partial_y f \right) \quad \mbox{and} \quad \bar\partial f := \frac12\left(\partial_x f + i\partial_y f \right).
\end{equation}
Since $\nu$ is a probability measure, it can be verified as in the proof of \cite[Thm. 1, see (4.39) and after]{uBY3} that the function
\begin{equation}
\label{eq:gpnu}
\Phi(z) := \exp\left\{2\int_1^z\frac{\partial V_D^\nu}{\partial z}(t)dt\right\} = \exp\left\{-\int\log\frac{\map(z)-\map(t)}{1-\map(z)\map(t)}d\nu(t)\right\}
\end{equation}
is well-defined in $O$ and maps it conformally onto the annulus $\{z:~1<|z|<\rho\}$ while $\Phi(\pm1)=\pm1$, where we take any path from $1$ to $z$ contained in $O\setminus\Delta$. Moreover, by direct examination of the kernel in \eqref{eq:gpnu}, we get that
\begin{equation}
\label{eq:reciprocityPhi}
\Phi^+=\overline{\Phi^-}=1/\Phi^- \quad \mbox{on} \quad \Delta.
\end{equation}
This, in particular, yields that $\jt\circ\Phi$ is holomorphic across $\Delta$, where $\jt(z)=(z+1/z)/2$ is the Joukovski transformation. Consequently, the inverse $(\jt\circ\Phi)^{-1}$ is a holomorphic univalent map in some neighborhood of $[-1,1]$ that analytically parametrizes $\Delta$. In what follows, we assume that $\Xi=(\jt\circ\Phi)^{-1}$.
  
Based on the conformal map $\Phi$, we define two more functions, $g$ and $\widetilde g$ as follows.  Set $L:=\Phi^{-1}([-\rho,-1])$, $\widetilde L:=\Phi^{-1}([1,\rho])$ (see Fig.~\ref{fig:1}), and define
\begin{equation}
\label{eq:gfun}
\begin{array}{lll}
g := \log\Phi, & \displaystyle \lim_{z\to1}g(z) = 0, & g\in\hf(O\setminus L), \smallskip \\
\widetilde g := \log\Phi-\pi i, & \displaystyle \lim_{z\to-1}\widetilde g(z) = 0, & \widetilde g\in\hf(O\setminus \widetilde L).
\end{array}
\end{equation}
It follows immediately from \eqref{eq:reciprocityPhi} that
\begin{equation}
\label{eq:bvg}
g^+ = -g^- \quad \mbox{and} \quad \widetilde g^+ = -\widetilde g^- \quad \mbox{on} \quad \Delta.
\end{equation}
Hence, $g^2$ and $\widetilde g^2$ are analytic in $O_g:=(O\cup\Delta)\setminus L$ and $O_{\widetilde g}:=(O\cup\Delta)\setminus \widetilde L$, respectively. Moreover, it holds that $g^2(\Delta)= \widetilde g^2(\Delta) = [-\pi^2,0]$ and $g^2(1)=\widetilde g^2(-1)=0$. It is also true that $g^2$ and $\widetilde g^2$ are univalent in $O_g$ and $O_{\widetilde g}$, respectively. Indeed, suppose that $g^2(z_1)=g^2(z_2)$, $z_1,z_2\in O_g$. Then either $\Phi(z_1)=\Phi(z_2)$ and therefore $z_1=z_2$ by conformality of $\Phi$ or $\Phi(z_1)=1/\Phi(z_2)$, which is possible only if $\Phi^+(z_1)=\overline{\Phi^-(z_2)}$, i.e., if $z_1=z_2\in\Delta$. The case of $\widetilde g^2$ is no different.

\subsection{Jordan arcs $\Delta_n$, functions $g_n$ and $\widetilde g_n$} 
\label{subsec:gn}

By Definition~\ref{df:S}-(2) and upon taking $\rho$ smaller if necessary, we may assume that functions $r_n$ have no zeros in $\overline O$. Moreover, as $r_n$ is holomorphic in $D\setminus\Delta$ and has $2n$ zeros in $D\setminus\overline O$, its winding number is equal to $-2n$ on any positively oriented curve homologous to $L_\rho$ and contained in $O$. In other words, $r_n$ has a continuous argument that decreases by $4n\pi$ as $\Delta$ is encompassed once in the positive direction. Thus, the functions
\[
\Phi_n := r_n^{-1/2n} = \exp\left\{-\int\log\frac{\map(z)-\map(t)}{1-\map(z)\map(t)}d\nu_n(t)\right\},
\]
are well-defined and analytic in $O$, where $\nu_n$ is the normalized counting measure of the zeros of $r_n$. Moreover, as the counting measures of zeros of $r_n$ converge weak$^*$ to $\nu$ by assumption, the functions $\Phi_n$ converge to $\Phi$ uniformly in $\overline O$, distinguishing the one-sided values on $\Delta^\pm$. 

Hence, we can define 
\begin{equation}
\label{eq:gnfun}
\begin{array}{lll}
g_n := \log\Phi_n, & \displaystyle \lim_{z\to1}g_n(z) = 0, & g_n\in\hf(O\setminus L), \smallskip \\
\widetilde g_n := \log\Phi_n - \pi i, & \displaystyle \lim_{z\to-1}\widetilde g_n(z) = 0, & \widetilde g_n\in\hf(O\setminus \widetilde L).
\end{array}
\end{equation}
By \eqref{eq:mappr}, it is straightforward to see that $\Phi_n^+\Phi_n^-\equiv1$ on $\Delta$, and therefore
\begin{equation}
\label{eq:bvgn}
g_n^+ = -g_n^- \quad \mbox{and} \quad \widetilde g_n^+ = -\widetilde g_n^- \quad \mbox{on} \quad \Delta.
\end{equation}
Thus, $g_n^2$ and $\widetilde g_n^2$ are analytic in $O_g$ and $O_{\widetilde g}$, respectively. We choose domains $O_L\subset O_g$ and $O_{\widetilde L} \subset O_{\widetilde g}$ in such a manner that $O_L\supset\widetilde L$, $O_{\widetilde L}\supset L$, and $O_L\cup O_{\widetilde L}$ is simply connected and contains $\Delta$ (see Fig.~\ref{fig:1}). Then it is an easy consequence of the convergence of $\Phi_n$ to $\Phi$ that $g_n^2$ and $\widetilde g_n^2$ converge uniformly to $g^2$ and $\widetilde g^2$ on $\overline O_L$ and $\overline O_{\widetilde L}$, respectively. 

\begin{figure}[!ht]
\centering
\includegraphics[scale=.45]{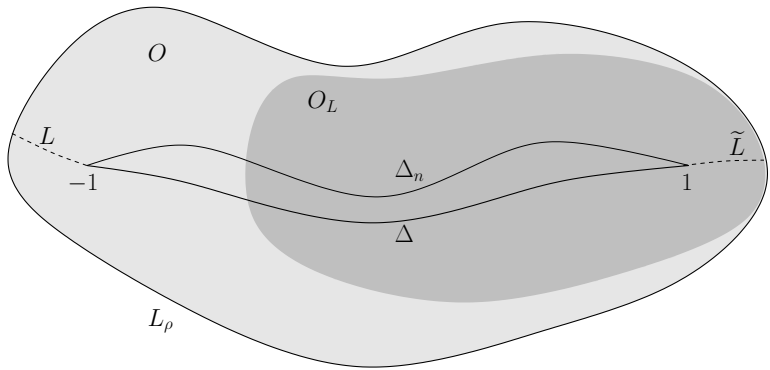}
\caption{\small The domain $O$ bounded by $\Delta$ and $L_\rho$(light grey), the domain $O_L\subset O\cup\Delta$ (dark grey), the cuts $L$ and $\widetilde L$ (dashed arcs).}
\label{fig:1}
\end{figure}

Next, we claim that $g_n^2$ and $\widetilde g_n^2$ are univalent for all $n$ large enough in $\overline O_L$ and $\overline O_{\widetilde L}$, respectively. Assume to the contrary that there exist two sequences of points $\{z_{1,n}\},\{z_{2,n}\}\subset\overline O_L$ such that $g_n^2(z_{1,n})=g_n^2(z_{2,n})$. As $\overline O_L$ is compact, we can assume that $z_{j,n}\to z_j\in \overline O_L$, $j=1,2$. Since $g_n^2$ converges to $g^2$ uniformly on $\overline O_L$, we have that $g^2(z_1)=g^2(z_2)$ and therefore $z_1=z_2$. Set $d_n(z):=(g^2_n(z)-g^2_n(z_{1,n}))/(z-z_{1,n})$. Then $d_n$ are analytic functions on $\overline O_L$ that converge uniformly to $d(z):=(g^2(z)-g^2(z_1))/(z-z_1)$. Moreover, the values $d_n(z_{2,n})$ are equal to $0$ and converge to $d(z_1)$. Thus, $(g^2)^\prime(z_1)=d(z_1)=0$, which is impossible since $g^2$ is univalent. This proves the claim as the case of $\widetilde g_n^2$ is no different. 

From the above we see that each $g_n^2$ maps $O_L$ conformally onto a neighborhood of zero as $g_n^2(1)=0$. Set $\Delta_{n,1}$ to be the preimage of the intersection of this neighborhood with $\Sigma_2:=\{\zeta:~\Arg(\zeta)=\pi\}$. Then $\Delta_{n,1}$ is an analytic arc with one endpoint being 1. Analogously, $\widetilde g_n^2$ maps $O_{\widetilde L}$ conformally into another neighborhood of zero. Thus, we can define $\Delta_{n,-1}$ to be again the preimage of the intersection of this neighborhood with $\Sigma_2$. Clearly, $\Delta_{n,-1}$ is an analytic arc with one endpoint being $-1$. Noticing that $g_n^2$ assumes negative values if and only if $\widetilde g_n^2$ assumes negative values on the common set of definition $O_L\cap O_{\widetilde L}$, we derive that $\Delta_n:=\Delta_{n,1}\cup \Delta_{n,-1}$ is an analytic arc with endpoint $\pm1$. 

\subsection{Parameterizations $\Xi_n$, functions $g_n^*$ and $\widetilde g_n^*$}

Now, define $\sr_n$ and $\map_n$ with respect to $\Delta_n$ like $\sr$ and $\map$ were defined in \eqref{eq:sr} and \eqref{eq:map} with respect to $\Delta$. Clearly, $\map_n$ is an analytic continuation of $\map$ from $D$ onto $D_n=\overline\C\setminus\Delta_n$. Further, let $r_n^*$ be defined by \eqref{eq:rn} with $\map$ replaced by $\map_n$ while keeping the same zeros as $r_n$. Hence, $r_n^*$ and $r_n$ are analytic continuations of each other defined in $D$ and $D_n$, respectively. Finally, set $\Phi_n^*$, $g_n^*$, and $\widetilde g_n^*$ to be the analytic continuations of $\Phi_n$, $g_n$, and $\widetilde g_n$ from $O$, $O\setminus L$, and $O\setminus\widetilde L$, onto $O_n:=(O\cup\Delta)\setminus \Delta_n$, $O_n\setminus L$, and $O_n\setminus\widetilde L$, defined in, by now, obvious manner. Hence
\[
g_n^2=(g_n^*)^2 \quad \mbox{and} \quad \widetilde g_n^2=(\widetilde g_n^*)^2,
\]
while $(g_n^*)^2$ is negative on $\Delta_n$. Since, in addition, $(g_n^*)^+=-(g_n^*)^-$ on $\Delta_n$, $(g_n^*)^\pm$ are pure imaginary on $\Delta_n$ and $|\Phi_n^*|\equiv1$ there. Therefore, $(\Phi_n^*)^+=\overline{(\Phi_n^*)^-}$ and $(r_n^*)^+=\overline{(r_n^*)^-}$.  Furthermore, $\Phi_n^*$ maps $O_n$ onto some annular domain having the unit circle as a component of its boundary. Arguing as was done after \eqref{eq:reciprocityPhi}, we derive that $\jt\circ\Phi_n^*$ is holomorphic across $\Delta_n$ and that $\Xi_n:=(\jt\circ\Phi_n^*)^{-1}$ is a holomorphic parameterization of $\Delta_n$, $\Xi_n([-1,1])=\Delta_n$. Moreover, as the functions $\Phi_n$ converge to $\Phi$ uniformly in some annular domain encompassing $\Delta$, we see that the functions $\jt\circ\Phi_n^*$ converge locally uniformly to $\jt\circ\Phi$ in some neighborhood of $\Delta$. Hence, the sequence of analytic parameterizations $\{\Xi_n\}$ of $\Delta_n$ converges uniformly to the analytic parameterization $\Xi$ of $\Delta$ in some neighborhood of $[-1,1]$. This finishes the proof of Theorem~\ref{thm:sp}.

\section{Trace Theorems and Extensions}
\label{sec:extension}

As is usual in the Riemann-Hilbert approach to asymptotics of orthogonal polynomials, we shall need to extend the weights of orthogonality from $\Delta$ into subsets of the complex plane. As the weights are not analytic, this extension will require a special construction that we carry out in this section.

\subsection{Domains with Smooth Boundaries} 
\label{ss:sb} 

In this section we suppose that $\Omega$ is a bounded simply connected domain with boundary $\Gamma$ which is infinitely smooth and contains $\Delta$, i.e., $\Delta\subset\Gamma$.

\begin{df}
\label{df:w}
Set $\lf^p(\Omega)$, $p\in[1,\infty)$, to be the space of all measurable functions $f$ such that $|f|^p$ is integrable over $\Omega$. The Sobolev space $\sof_p^1(\Omega)$, $p\in[1,\infty)$, is the subspace of $\lf^p(\Omega)$ that comprises of functions with weak partial derivatives also in $\lf^p(\Omega)$.
\end{df} 

Then the following theorem takes place \cite[Thm. 1.5.1.2]{Grisvard}.

\begin{thmT1}
\label{thm:T1}
For each $f\in\sof_p^{1-1/p}(\Gamma)$, $p\in(1,\infty)$, there exists $F\in\sof_p^1(\Omega)$ such that $F_{|\Gamma}=f$. Moreover, the extension operator can be made independent of $p$. Conversely, for every $F\in\sof^1_p(\Omega)$ it holds that $F_{|\Gamma}\in\sof^{1-1/p}_p(\Gamma)$.
\end{thmT1}

Together with the Sobolev spaces $\sof_p^1(\Omega)$, we consider smoothness classes $\cf^{m,\varsigma}(\overline\Omega)$.
\begin{df}
\label{df:c}
By $\cf^{m,\varsigma}(\overline\Omega)$, $m\in\Z_+$,  $\varsigma\in(0,1]$, we denote the space of all functions on $\overline\Omega$ whose partial derivatives up to the order $m$ are continuous on $\overline\Omega$ and whose partial derivatives of order $m$ are uniformly H\"older continuous on $\overline\Omega$ with exponent $\varsigma$. Moreover, $\cf^{m,\varsigma}_0(\overline\Omega)$ will stand for the subset of $\cf^{m,\varsigma}(\overline\Omega)$ consisting of functions whose partial derivatives up to order $m$, including the function itself, vanish on $\Gamma$. Finally, $\cf^\infty(\overline\Omega)$ will denote the space of functions on $\Omega$ whose partial derivatives of any order exist and are continuous on~$\overline\Omega$.
\end{df} 

It is known from Sobolev's imbedding theorem \cite[Thm. 5.4]{Adams} that
\begin{equation}
\label{eq:sobimb}
\sof_p^1(\Omega) \subset \left\{
\begin{array}{ll}
\lf^{2p/(2-p)}(\Omega), & p\in[1,2), \smallskip \\
\cf^{0,{1-2/p}}(\overline\Omega), & p\in(2,\infty).
\end{array}
\right.
\end{equation}
Hence, for $f\in\sof^{1-1/p}_p(\Gamma)$, $p\in(2,\infty)$, the function $F$ granted by Theorem~\hyperref[thm:T1]{T1} belongs to $\cf^{0,{1-2/p}}(\overline\Omega)$ and therefore $f\in\cf^{0,{1-2/p}}(\Gamma)$, which is exactly what was stated in \eqref{eq:simb}.

Later on, we shall need the following proposition.
\begin{prop}
\label{pr:smooth1}
Let $f$ be a continuous function on $\Delta$ such that $f(\pm1)=0$. If $f\in\sof^{1-1/p}_p$, $p\in(2,\infty)$, then there exists $F\in\sof^1_p(\Omega)$ such that $F_{|\Delta}=f$. Moreover, if $f\in\cf^{0,\varsigma}$, $\varsigma\in(1/2,1]$, then there exists $F\in\sof^1_q(\Omega)$ for any $q\in(2,\frac{1}{1-\varsigma})$ such that $F_{|\Delta}:=f$.
\end{prop}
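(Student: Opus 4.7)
The strategy is to reduce to Theorem~\hyperref[thm:T1]{T1} by extending $f$ from $\Delta$ to all of $\Gamma$ by zero. Define $\tilde f$ on $\Gamma$ by $\tilde f := f$ on $\Delta$ and $\tilde f := 0$ on $\Gamma\setminus\Delta$. Since $f$ is continuous on $\Delta$ with $f(\pm 1)=0$, $\tilde f$ is continuous on $\Gamma$. Once we verify that $\tilde f\in\sof^{1-1/p}_p(\Gamma)$ in the first case and $\tilde f\in\sof^{1-1/q}_q(\Gamma)$ in the second, Theorem~\hyperref[thm:T1]{T1} immediately produces the required $F\in\sof^1_p(\Omega)$ (resp.\ $\sof^1_q(\Omega)$) satisfying $F_{|\Delta}=f$.

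For the first statement, split the Slobodeckij double integral over $\Gamma\times\Gamma$ according to whether each variable lies in $\Delta$ or in $\Gamma\setminus\Delta$. The piece over $\Delta\times\Delta$ is finite by hypothesis, the piece over $(\Gamma\setminus\Delta)^2$ vanishes, and the two mixed pieces coincide by symmetry and equal
\[
\iint_{\Delta\times(\Gamma\setminus\Delta)}\frac{|f(x)|^p}{|x-y|^p}\,|dx|\,|dy|.
\]
Parametrizing $\Gamma$ by arclength near each endpoint of $\Delta$ and integrating the $y$-variable over the outside portion, this is controlled, up to a multiplicative constant depending only on $\Gamma$ and the separation of $\Delta$ from $\Gamma\setminus\Delta$ away from $\pm 1$, by
\[
\int_\Delta \frac{|f(x)|^p}{d(x)^{p-1}}\,|dx|,
\]
where $d(x)$ denotes the arclength distance from $x$ to $\{\pm 1\}$. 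The required finiteness is a standard fractional Hardy inequality: for $g\in\sof^{1-1/p}_p([0,L])$ vanishing at $0$ and $L$, the integral $\int_0^L|g(s)|^p/\min(s,L-s)^{p-1}\,ds$ is controlled by a linear combination of the Slobodeckij seminorm of $g$ and its $\lf^p$-norm, provided $1-1/p>1/p$, i.e., $p>2$.

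For the second statement, reduce to the first by directly showing that $f\in\sof^{1-1/q}_q$ for every $q\in(2,1/(1-\varsigma))$. Using $|f(x)-f(y)|\leq\const|x-y|^\varsigma$,
\[
\iint_{\Delta\times\Delta}\left|\frac{f(x)-f(y)}{x-y}\right|^q|dx|\,|dy|\leq\const^q\iint_{\Delta\times\Delta}|x-y|^{(\varsigma-1)q}\,|dx|\,|dy|,
\]
which converges precisely when $(\varsigma-1)q>-1$, i.e., $q<1/(1-\varsigma)$. Since $\varsigma>1/2$ forces $1/(1-\varsigma)>2$, the range $q\in(2,1/(1-\varsigma))$ is non-empty, and the first statement applies with $p=q$.

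The main obstacle is the fractional Hardy inequality invoked for the cross-term estimate; the conclusion genuinely requires the strict inequality $p>2$, since the inequality degenerates at the critical exponent $p=2$ where the trace of a $\sof^{1-1/p}_p$ function ceases to be well-defined. Once this inequality is in hand, the remaining steps are bookkeeping with the Slobodeckij seminorm followed by a direct application of Theorem~\hyperref[thm:T1]{T1}.
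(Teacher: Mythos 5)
Your proof is correct and follows the same route as the paper's: extend $f$ by zero from $\Delta$ to $\Gamma$ and invoke Theorem T1, reducing the second case to the first via the elementary inclusion $\cf^{0,\varsigma}\subset\sof^{1-1/q}_q$ for $q<1/(1-\varsigma)$. You are more careful than the paper at the key point: the paper asserts that $\tilde f\in\sof^{1-1/p}_p(\Gamma)$ is ``immediate to check,'' whereas you correctly isolate the cross-term $\iint_{\Delta\times(\Gamma\setminus\Delta)}|f(x)|^p|x-y|^{-p}\,|dx|\,|dy|\lesssim\int_\Delta|f(x)|^p d(x)^{1-p}\,|dx|$ and recognize that its finiteness is a fractional Hardy inequality, which genuinely uses $(1-1/p)p=p-1>1$, i.e.\ $p>2$ (note that the H\"older bound $|f(x)|\lesssim d(x)^{1-2/p}$ from the Sobolev embedding alone only gives the borderline logarithmically divergent integral, so Hardy really is the needed ingredient). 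This is the right justification and fills in what the paper leaves implicit.
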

\begin{proof} In both cases set $\tilde f=f$ on $\Delta$ and $\tilde f \equiv0$ on $\Gamma\setminus\Delta$. When $f\in\sof^{1-1/p}_p$, it is immediate to check that $\tilde f\in\sof^{1-1/p}_p(\Gamma)$ and therefore the first claim follows from Theorem~\hyperref[thm:T1]{T1}. When $f\in\cf^{0,\varsigma}$, it holds that $\tilde f\in\cf^{0,\varsigma}(\Gamma)$ and $\widetilde f\in\sof^{1-1/q}_q(\Gamma)$ for any $q\in(1,\frac{1}{1-\varsigma})$ by an easy estimate (see Definition~\ref{df:sobolev}). Hence, the second claim of the proposition again follows from Theorem~\hyperref[thm:T1]{T1}.
\end{proof}

To state a trace theorem for classes $\cf^{m,\varsigma}(\overline\Omega)$, we need to introduce the notion of a directional derivative. Namely, let $\xi$ be a continuous function on $\overline\Omega$ and $f\in\sof^1_p(\Omega)$. With the slight abuse of notation, we define the derivative of $f$ in the direction of the field $\xi$, denoted by $\partial_\xi$, as
\begin{equation}
\label{eq:directder}
\partial_\xi f := \bar\xi\bar\partial f + \xi\partial f = \re(\xi)\partial_xf+\im(\xi)\partial_yf,
\end{equation}
where $\nabla f$ is the gradient of $f$, $\vec{\xi}$ is the vector field with values in $\R^2$ corresponding to $\xi$.

As $\Gamma$ is infinitely smooth, any conformal map $\phi$ of $\Omega$ onto the unit disk belongs to $\cf^\infty(\overline\Omega)$ \cite[Thm. 3.6]{Pommerenke}. Moreover, it holds that $\phi^\prime\neq0$ in $\overline\Omega$. Thus, we may set
\begin{equation}
\label{eq:nfield}
n(z) := \frac{\phi(z)}{\phi^\prime(z)} \quad \mbox{and} \quad \mathbf{n}(z):=\phi(z)\frac{|\phi^\prime(z)|}{\phi^\prime(z)}, \quad z\in\overline\Omega.
\end{equation}
Then $n,\mathbf{n}\in\cf^\infty(\overline\Omega)$ and $n$ is holomorphic in $\Omega$. Moreover, for any $z\in\Gamma$, $\mathbf{n}(z)=n(z)/|n(z)|$ represents the complex number corresponding to the outer unit normal to $\Gamma$ at $z$. Then the following theorem takes place \cite[Thm. 6.2.6]{Grisvard}.

\begin{thmT21}
\label{thm:T21}
Let $\{f_k\}_{k=0}^m$ be such that $f_k\in\cf^{m-k,\varsigma}(\Gamma)$, $m\in\N$, $\varsigma\in(0,1]$, $k\in\{0,\ldots,m\}$. Then there exists $F\in\cf^{m,\varsigma}(\overline\Omega)$ such that $(\partial_\mathbf{n}^kF)_{|\Gamma}=f_k$, $k\in\{0,\ldots,m\}$.
\end{thmT21}

Now, observe that
\[
\partial^k_\mathbf{n} = |\phi^\prime|^k\partial^k_n + \sum_{j=1}^{k-1} c_{k,j} \partial^j_n,
\]
where the functions $c_{k,j}$ involve sums and products of the powers of the iterated directional derivatives of $|\phi^\prime|$ with respect to the field $n$ and therefore belong to $\cf^\infty(\overline\Omega)$. Set $c_{k,k}:=|\phi^\prime|^k$ and $c_{k,j}\equiv0$, $j\in\{k+1,\ldots,m\}$, $k\in\{1,\ldots,m\}$. Then the matrix $\mathscr{C}_\phi=[c_{k,j}]_{k,j=1}^m$ is such that $\det(\mathscr{C}_\phi)=|\phi^\prime|^{m(m+1)/2}$, which is non-vanishing at any $z\in\overline\C$, and
\[
(\partial_\mathbf{n},\ldots,\partial^m_\mathbf{n})^\mathrm{T} = \mathscr{C}_\phi (\partial_n,\ldots,\partial^m_n)^\mathrm{T}.
\]
Thus, to every family of functions $\{f_k\}_{k=0}^m$, $f_k\in\cf^{m-k,\varsigma}(\Gamma)$, there corresponds another family, say $\{\tilde f_k\}_{k=0}^m$, $\tilde f_k\in\cf^{m-k,\varsigma}(\Gamma)$, such that there exists $F\in\cf^{m,\varsigma}(\overline\Omega)$ satisfying $(\partial_\mathbf{n}^kF)_{|\Gamma}=f_k$ and $(\partial_n^kF)_{|\Gamma}=\tilde f_k$. Moreover, this correspondence is one-to-one and onto. Hence, Theorem~\hyperref[thm:T21]{T2(1)} can be equivalently reformulated as follows.

\begin{thmT22}
\label{thm:T22}
Let $\{f_k\}_{k=0}^m$ be such that $f_k\in\cf^{m-k,\varsigma}(\Gamma)$, $m\in\N$, $\varsigma\in(0,1]$, $k\in\{0,\ldots,m\}$. Then there exists $F\in\cf^{m,\varsigma}(\overline\Omega)$ such that $(\partial_n^kF)_{|\Gamma}=f_k$, $k\in\{0,\ldots,m\}$.
\end{thmT22}

Finally, we define $\tau:=in$. Clearly, $\tau(z)/|\tau(z)|$, $z\in\Gamma$, is the complex number corresponding to the positively oriented unit tangent to $\Gamma$ at $z$. Since $n$ and $\tau$ are holomorphic functions such that $\tau=in$, it is a simple computation to verify that $\partial_n\partial_\tau F=\partial_\tau\partial_n F$ in $\overline\Omega$. Then the following proposition holds.

\begin{prop}
\label{pr:smooth2}
Let $f\in\cf^{m,\varsigma}(\Delta)$, $m\in\N$, $\varsigma\in(0,1]$, $f^{(k)}(\pm1)=0$, $k\in\{0,\ldots,m\}$. Then there exists $F\in\cf^{m,\varsigma}(\overline\Omega)$ such that $F_{|\Delta}=f$ and $\bar\partial F\in\cf^{m-1,\varsigma}_0(\overline\Omega)$.
\end{prop}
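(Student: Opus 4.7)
The plan is to reduce Proposition \ref{pr:smooth2} to Theorem~\hyperref[thm:T22]{T2(2)} by constructing normal-derivative data on $\Gamma$ that force $\bar\partial F$ to vanish to order $m-1$. First, as in the proof of Proposition \ref{pr:smooth1}, I extend $f$ by zero: set $\tilde f := f$ on $\Delta$ and $\tilde f \equiv 0$ on $\Gamma\setminus\Delta$. The hypothesis $f^{(k)}(\pm 1)=0$ for $k=0,\ldots,m$ makes the derivatives of orders $0,\ldots,m-1$ match continuously at $\pm 1$, while for $s_1\in\Delta$ and $s_2\in\Gamma\setminus\Delta$ near $\pm 1$ the bound
\[
|\tilde f^{(m)}(s_1)-\tilde f^{(m)}(s_2)|=|f^{(m)}(s_1)-f^{(m)}(\pm 1)|\le\const|s_1\mp 1|^\varsigma\le\const|s_1-s_2|^\varsigma
\]
yields $\tilde f\in\cf^{m,\varsigma}(\Gamma)$.

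Next, I would exploit the identity
\[
2\bar n\,\bar\partial F=\partial_n F+i\partial_\tau F,
\]
immediate from $\partial_n=\bar n\bar\partial+n\partial$ and $\partial_\tau=-i\bar n\bar\partial+in\partial$ (since $\tau=in$), to motivate the boundary data
\[
f_k:=(-i)^k\partial_\tau^k\tilde f,\qquad k=0,\ldots,m,
\]
on $\Gamma$. Because $\partial_\tau$ restricted to $\Gamma$ equals the $\cf^\infty(\Gamma)$ function $|\tau|$ times the arclength derivative, each $\partial_\tau$ drops one degree of smoothness, so $f_k\in\cf^{m-k,\varsigma}(\Gamma)$. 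Theorem~\hyperref[thm:T22]{T2(2)} then produces $F\in\cf^{m,\varsigma}(\overline\Omega)$ with $(\partial_n^k F)_{|\Gamma}=f_k$ for $k=0,\ldots,m$; in particular $F_{|\Delta}=\tilde f_{|\Delta}=f$.

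The technical heart is to show $(\partial_n^j\bar\partial F)_{|\Gamma}=0$ for $j=0,\ldots,m-1$, which I would prove by induction on $j$. Applying $\partial_n^j$ to the identity above via the Leibniz rule, using the inductive hypothesis to annihilate the terms involving $\partial_n^i\bar\partial F$ with $i<j$ on $\Gamma$, and dividing by $\bar n|_\Gamma\ne 0$, one is reduced to
\[
2\bar n\,(\partial_n^j\bar\partial F)_{|\Gamma}=(\partial_n^{j+1}F+i\partial_\tau\partial_n^j F)_{|\Gamma}=f_{j+1}+i\partial_\tau f_j,
\]
where I invoke the commutativity $\partial_n\partial_\tau=\partial_\tau\partial_n$ stated just before Proposition \ref{pr:smooth2}. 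The definition of the $f_k$ and the identity $(-i)^{j+1}+i(-i)^j=0$ make this vanish, closing the induction. The step I expect to require the most care is verifying that $\partial_n$ and $\partial_\tau$ commute as differential operators throughout $\overline\Omega$ (which follows from the Cauchy--Riemann relations for the holomorphic field $n$, hence the vanishing of the Lie bracket of the underlying vector fields), so that commutativity may be used freely inside $\Omega$ during the Leibniz manipulation and not merely on boundary traces.

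Finally, $\bar\partial F\in\cf^{m-1,\varsigma}(\overline\Omega)$ is automatic from $F\in\cf^{m,\varsigma}(\overline\Omega)$, and vanishing of every partial derivative of $\bar\partial F$ of order $\le m-1$ on $\Gamma$ follows from
\[
(\partial_n^a\partial_\tau^b\bar\partial F)_{|\Gamma}=\partial_\tau^b(\partial_n^a\bar\partial F)_{|\Gamma}=0 \qquad \text{for } a+b\le m-1,
\]
since tangential derivatives of a function vanishing on $\Gamma$ vanish on $\Gamma$. As $n$ and $\tau$ are linearly independent along $\Gamma$, the mixed operators $\{\partial_n^a\partial_\tau^b\}_{a+b\le m-1}$ span, with $\cf^\infty$ coefficients near $\Gamma$, all partial differential operators of order $\le m-1$, whence $\bar\partial F\in\cf^{m-1,\varsigma}_0(\overline\Omega)$.
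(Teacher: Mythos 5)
Your proof follows the paper's argument essentially verbatim: extend $f$ by zero to $f_0\in\cf^{m,\varsigma}(\Gamma)$, prescribe $f_k=(-i)^k\partial_\tau^kf_0$, invoke Theorem~\hyperref[thm:T22]{T2(2)}, and exploit $2\bar n\,\bar\partial F=\partial_n F+i\partial_\tau F$ together with $\partial_n\partial_\tau=\partial_\tau\partial_n$ and the spanning property of $\{\partial_n,\partial_\tau\}$ near $\Gamma$. The only cosmetic difference is that you first prove $(\partial_n^j\bar\partial F)_{|\Gamma}=0$ by induction on $j$ and then pick up the tangential derivatives, whereas the paper computes $(\partial_\tau^{k_1}\partial_n^{k_2}H)_{|\Gamma}$ directly for all $k_1+k_2\le m-1$ in one stroke — the two are equivalent since $H=2\bar n\,\bar\partial F$ with $\bar n$ non-vanishing near $\Gamma$.
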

\begin{proof}
Set $f_0=f$ on $\Delta$ and $f_0\equiv0$ on $\Gamma\setminus\Delta$. It is clear that $f_0\in\cf^{m,\varsigma}(\Gamma)$. Further, set $f_k:=(-i)^k\partial_\tau^kf_0$, $k\in\{1,\ldots,m\}$. As $f_k\in\cf^{m-k,\varsigma}(\Gamma)$, $k\in\{0,\ldots,m\}$, Theorem~\hyperref[thm:T22]{T2(2)} yields that there exists $F\in\cf^{m,\varsigma}(\overline\Omega)$ such that $(\partial^k_nF)_{|\Gamma}=f_k$. In particular, $F_{|\Delta}=f$. 

It remains only to show that $\bar\partial F\in\cf^{m-1,\varsigma}_0(\overline\Omega)$. It can be easily checked that
\begin{equation}
\label{eq:partialbar}
2\bar n \bar\partial F = \partial_n F + i \partial_\tau F =: H \quad \mbox{in} \quad \Omega^\prime,
\end{equation}
where $\Omega^\prime$ is an annular domain such that $\Gamma\subset\partial\Omega^\prime$ and $n$ is non-vanishing on this domain. As $n\in\cf^\infty(\overline\Omega)$ and is zero free in $\Omega^\prime$, it holds that $\bar\partial F\in\cf^{m-1,\varsigma}_0(\overline\Omega)$ if and only if $H\in\cf^{m-1,\varsigma}_0(\overline\Omega)$. Moreover, it is immediate from the construction of $F$ that $H\in\cf^{m-1,\varsigma}(\overline\Omega)$. Thus, it is only necessary to verify that all the partial derivatives of $H$ of order $k$, for any $k\in\{0,\ldots,m-1\}$, vanish on $\Gamma$. Since partial derivatives with respect to $n$ and $\tau$ commute, and these fields are non-vanishing and non-collinear in $\Omega^\prime$, it is enough to show that
\[
0 \equiv (\partial_\tau^{k_1}\partial_n^{k_2}H)_{|\Gamma} = \left(\partial_\tau^{k_1} \partial_n^{k_2+1} F\right)_{|\Gamma} + i \left(\partial_\tau^{k_1+1} \partial_n^{k_2} F\right)_{|\Gamma}
\]
for all $k_1+k_2\in\{0,\ldots,m-1\}$. The latter holds since
\[
\left(\partial_\tau^{k_1} \partial_n^{k_2+1} F\right)_{|\Gamma} = \partial_\tau^{k_1}f_{k_2+1} = (-i)^{k_2+1} \partial_\tau^{k_1+k_2+1}f_0 = -i\partial_\tau^{k_1+1}f_{k_2} = -i \left(\partial_\tau^{k_1+1}\partial_n^{k_2} F\right)_{|\Gamma},
\]
by the choice of $\{f_k\}$.
\end{proof}

\subsection{Domains with Polygonal Boundary}
\label{ss:pb}

The previous results also hold, with some modifications, for domains with polygonal boundaries. Namely, let $\Omega$ be a domain whose boundary is a curvilinear polygon consisting of two pieces, say $\Delta_1$ and $\Delta_2$, such that they might form corners at the joints. As we do not strive for generality at this point, we assume that each $\Delta_j$ is an analytic arc connecting $-1$ and 1.

The first trace theorem of this section states the following \cite[Thm. 1.5.2.3]{Grisvard}.

\begin{thmT3}
\label{thm:T3}
Given $f_j\in\sof^{1-1/p}_p(\Delta_j)$, $j=1,2$, satisfying $f_1(\pm1)=f_2(\pm1)$, there exists $F\in\sof^1_p(\Omega)$ such that $F_{|\Delta_j}=f_j$, $j=1,2$. The choice of $F$ can be made in such a way that it depends only on $f_j$ and not on $p$.
\end{thmT3}

To state an analogous theorem for the classes $\cf^{m,\varsigma}(\Delta_j)$, we again need to define normal fields on $\Delta_j$, $j=1,2$. Let $\Gamma_j$ be an infinitely smooth Jordan curve such that $\Delta_j\subset\Gamma_j$. Moreover, assume that the interior domain of $\Gamma_j$, say $\Omega_j$, contains $\Omega$, $j=1,2$. Define $n_j$  for $\Omega_j$ as it was done in \eqref{eq:nfield}. Composing with a self-map of the disk if necessary, we can choose conformal maps in \eqref{eq:nfield} so $n_j$ does not vanish in $\Omega$. Further, set $\tau_j:= in_j$ if $\Omega$ lies on the left side of $\Delta_j$ and $\tau_j:= -in_j$ otherwise. In particular, the fields $\tau_j$ and $n_j$ commute, are infinitely smooth, non-vanishing and non-collinear. Finally, observe that \eqref{eq:partialbar} holds with $n$, $\tau$, and $\Omega^\prime$ replaced by $n_j$, $\tau_j$, $\Omega$, and the plus sign replaced by the minus sign in the right-hand side of \eqref{eq:partialbar} when $\tau_j=-in_j$. 

With all the necessary material at hand, we can state a special case of the trace theorem for smoothness classes on domains with polygonal boundary\footnote{In Theorem~\hyperref[thm:T4]{T4} we use non-unit normal fields $n_j$ rather than fields that are unit on $\Delta_j$ as it was done in the original reference. However, we have already explained after Theorem~\hyperref[thm:T21]{T2(1)} that these formulations are equivalent.} \cite[Cor. 6.2.8]{Grisvard}. 

\begin{thmT4}
\label{thm:T4}
Given $\{f_{jk}\}_{k=0}^m$, $f_{jk}\in\cf^{m-k,\varsigma}(\Delta_j)$, $m\in\N$, $\varsigma\in(0,1]$, $j=1,2$, satisfying $f_{jk_1}^{(k_2)}(\pm1)=0$, $k_1+k_2\in\{0,\ldots,m\}$, there exists $F\in\cf^{m,\varsigma}(\overline\Omega)$ such that $(\partial^k_{n_j}F)_{|\Delta_j}=f_{jk}$, $k\in\{0,\ldots,m\}$, $j=1,2$.
\end{thmT4}

Now, as in Section~\ref{ss:sb}, we shall make Theorems~\hyperref[thm:T3]{T3} and \hyperref[thm:T4]{T4} suit our needs. Let $\Delta$ be a closed analytic Jordan arc and $\Delta_\pm$ be two closed analytic Jordan arcs with endpoints $\pm1$ such that the interior domain of $\Delta\cup\Delta_+$, say $\Omega_+$, is simply connected and lies to the left of $\Delta$ while the interior domain of $\Delta\cup\Delta_-$, say $\Omega_-$, is again simply connected and lies to the right of $\Delta$. Then the following proposition holds.

\begin{prop}
\label{pr:corners1}
Let $f$ be a continuous function on $\Delta$ such that $f(\pm1)=0$. If $f\in\sof^{1-1/p}_p$, $p\in(2,\infty)$, then there exist $F_\pm\in\sof^1_p(\Omega_\pm)$ such that
\begin{equation}
\label{eq:Fpm}
F_{\pm|\Delta}=\pm f \quad \mbox{and} \quad F_{\pm|\Delta_\pm}\equiv0.
\end{equation}
Moreover, if $f\in\cf^{0,\varsigma}$, $\varsigma\in(1/2,1]$, then there exist $F_\pm\in\sof^1_q(\Omega_\pm)$ for any $q\in(2,\frac{1}{1-\varsigma})$ satisfying \eqref{eq:Fpm}.
\end{prop}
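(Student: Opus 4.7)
The plan is to treat this as the polygonal-boundary analogue of Proposition~\ref{pr:smooth1}, using Theorem~\hyperref[thm:T3]{T3} in place of Theorem~\hyperref[thm:T1]{T1}. The domain $\Omega_+$ is precisely a curvilinear polygon whose two arcs $\Delta$ and $\Delta_+$ are analytic and meet at the corners $\pm1$, so Theorem~\hyperref[thm:T3]{T3} applies with the pieces $\Delta_1 = \Delta$ and $\Delta_2 = \Delta_+$; the same is true for $\Omega_-$.

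Concretely, for $F_+$ I would set $f_1 := f$ on $\Delta$ and $f_2 \equiv 0$ on $\Delta_+$; for $F_-$ I would set $f_1 := -f$ on $\Delta$ and $f_2 \equiv 0$ on $\Delta_-$. The corner compatibility condition $f_1(\pm 1)=f_2(\pm 1)$ required by Theorem~\hyperref[thm:T3]{T3} reduces in either case to $f(\pm 1)=0$, which is one of the standing hypotheses.

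In the first case $f\in\sof^{1-1/p}_p$ with $p\in(2,\infty)$, so both $f_1\in\sof^{1-1/p}_p(\Delta)$ and $f_2\equiv0\in\sof^{1-1/p}_p(\Delta_\pm)$; Theorem~\hyperref[thm:T3]{T3} then furnishes $F_\pm\in\sof^1_p(\Omega_\pm)$ satisfying \eqref{eq:Fpm}. In the second case $f\in\cf^{0,\varsigma}$ with $\varsigma>1/2$, I would reuse the simple pointwise estimate employed in the proof of Proposition~\ref{pr:smooth1}: from Definition~\ref{df:sobolev}, H\"older continuity with exponent $\varsigma$ gives
\[
\iint_{\Delta\times\Delta}\left|\frac{f(x)-f(y)}{x-y}\right|^q|dx||dy|\le \const\iint_{\Delta\times\Delta}|x-y|^{q(\varsigma-1)}|dx||dy|<\infty
\]
whenever $q(1-\varsigma)<1$, i.e. for every $q\in(2,\tfrac{1}{1-\varsigma})$. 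Thus $f\in\sof^{1-1/q}_q$ and the first case applies with $p$ replaced by $q$ to yield $F_\pm\in\sof^1_q(\Omega_\pm)$.

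There is really no main obstacle here beyond bookkeeping: the two potential concerns are (a) compatibility of boundary data at the corners, which is handled by the assumption $f(\pm1)=0$ and the fact that the zero function matches any trace, and (b) ensuring that the fractional Sobolev regularity defined intrinsically on $\Delta$ agrees with the one used in Theorem~\hyperref[thm:T3]{T3} on the analytic arc $\Delta$, which is immediate since $\Delta$ is analytic and the seminorm of Definition~\ref{df:sobolev} is the one inherited from the arclength parametrization of each analytic side of the polygon.
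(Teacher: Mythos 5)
Your proof is correct and takes essentially the same route the paper does: apply Theorem~T3 to the pieces $f_1=\pm f$ on $\Delta$ and $f_2\equiv0$ on $\Delta_\pm$, with the corner compatibility discharged by $f(\pm1)=0$, and handle the H\"older case by the elementary embedding $\cf^{0,\varsigma}\subset\sof^{1-1/q}_q$ for $q<\frac{1}{1-\varsigma}$. The paper simply records this as ``follows from Theorem~T3 in the same fashion as Proposition~\ref{pr:smooth1} followed from Theorem~T1,'' which is exactly the argument you spell out.
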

\begin{proof}
This proposition follows from Theorem~\hyperref[thm:T3]{T3} in the same fashion as Proposition~\ref{pr:smooth1} followed from Theorem~\hyperref[thm:T1]{T1}.
\end{proof}

Finally, we state the counterpart of Proposition~\ref{pr:smooth2} for domains with corners.

\begin{prop}
\label{pr:corners2}
Let $f\in\cf^{m,\varsigma}(\Delta)$, $m\in\N$, $\varsigma\in(0,1]$, $f^{(k)}(\pm1)=0$, $k\in\{0,\ldots,m\}$. Then there exist functions $F_\pm\in\cf^{m,\varsigma}(\overline\Omega_\pm)$ such that
\begin{equation}
\label{eq:FOmegaPM}
F_{\pm|\Delta}=\pm f, \quad F_{\pm|\Delta_\pm}\equiv0, \quad \mbox{and} \quad  \bar\partial F_\pm\in\cf^{m-1,\varsigma}_0(\overline\Omega_\pm).
\end{equation}
\end{prop}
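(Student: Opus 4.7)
By symmetry it suffices to construct $F_+$; the case of $F_-$ follows on replacing $f$ by $-f$. The strategy parallels the proof of Proposition~\ref{pr:smooth2}, with Theorem~\hyperref[thm:T4]{T4} replacing Theorem~\hyperref[thm:T22]{T2(2)} to account for the corners of $\partial\Omega_+=\Delta\cup\Delta_+$ at $\pm1$. Let $n_j,\tau_j$ be the normal and tangent fields on $\Delta_j\in\{\Delta,\Delta_+\}$ constructed in Section~\ref{ss:pb}. The idea is to prescribe data forcing the Cauchy--Riemann-type identity $\partial_{n_1}F_++i\partial_{\tau_1}F_+\equiv0$ to hold on $\Delta$ up to jets of order $m$, and identically zero data on $\Delta_+$.

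Accordingly, set $f_{1,0}:=f$ and $f_{1,k}:=(-i)^k\partial_{\tau_1}^k f$ for $k=1,\ldots,m$ on $\Delta$, and $f_{2,k}\equiv0$ on $\Delta_+$ for every $k$. Since $f\in\cf^{m,\varsigma}(\Delta)$ and $\tau_1\in\cf^\infty$ is non-vanishing, $f_{1,k}\in\cf^{m-k,\varsigma}(\Delta)$. Iterating the identity $\partial_{\tau_1}=|\tau_1|\partial_s$ expresses $f_{1,k_1}^{(k_2)}(\pm1)$ as a linear combination of the arclength derivatives $f^{(j)}(\pm1)$ with $j\leq k_1+k_2$, all of which vanish by hypothesis; hence the corner compatibility $f_{j,k_1}^{(k_2)}(\pm1)=0$ required by Theorem~\hyperref[thm:T4]{T4} for $k_1+k_2\leq m$ holds (trivially on $\Delta_+$). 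Theorem~\hyperref[thm:T4]{T4} thus yields $F_+\in\cf^{m,\varsigma}(\overline\Omega_+)$ with $(\partial^k_{n_j}F_+)_{|\Delta_j}=f_{j,k}$; in particular $F_{+|\Delta}=f$ and $F_{+|\Delta_+}\equiv0$.

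To complete the verification of \eqref{eq:FOmegaPM} I must show $\bar\partial F_+\in\cf^{m-1,\varsigma}_0(\overline\Omega_+)$: the class regularity is immediate, so only the vanishing of all partials of $\bar\partial F_+$ of order $\leq m-1$ on $\partial\Omega_+$ is at stake. On a neighborhood of $\Delta$, the analogue of \eqref{eq:partialbar} gives $2\bar n_1\bar\partial F_+=\partial_{n_1}F_++i\partial_{\tau_1}F_+=:H$ (with $+i$ since $\Omega_+$ lies on the left of $\Delta$). For $k_1+k_2\leq m-1$ the definition of the $f_{1,k}$ forces
\[
\bigl(\partial_{\tau_1}^{k_1}\partial_{n_1}^{k_2}H\bigr)_{|\Delta}=\bigl((-i)^{k_2+1}+i(-i)^{k_2}\bigr)\,\partial_{\tau_1}^{k_1+k_2+1}f\equiv0,
\]
exactly as in the proof of Proposition~\ref{pr:smooth2}; since $\bar n_1$ is a smooth non-vanishing factor, every such partial of $\bar\partial F_+$ vanishes on $\Delta$. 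On $\Delta_+$, the data $F_{+|\Delta_+}\equiv0$ and $(\partial_{n_2}^k F_+)_{|\Delta_+}\equiv0$ for $k\leq m$, together with the fact that $n_2,\tau_2$ commute and span the tangent plane at each point, force every mixed partial $\partial_{\tau_2}^{k_1}\partial_{n_2}^{k_2}F_+$ with $k_1+k_2\leq m$ to vanish on $\Delta_+$, so all partials of $\bar\partial F_+$ of order $\leq m-1$ vanish there as well.

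The only delicate point is the compatibility of the two sets of boundary data at the corners $\pm1$: the jets $\{f_{1,k}\}$ engineered to kill $H$ on $\Delta$ must mesh with the zero data prescribed on $\Delta_+$ to the order required by Theorem~\hyperref[thm:T4]{T4}. This is exactly what the strong vanishing hypothesis $f^{(k)}(\pm1)=0$ for $k=0,\ldots,m$ provides, reducing each relevant corner relation to the trivial identity $0=0$ and allowing the rest of the argument to proceed boundary piece by boundary piece.
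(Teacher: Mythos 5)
Your construction of $F_+$ is correct and essentially coincides with the paper's: you prescribe $f_{1,k}:=(-i)^k\partial_{\tau_1}^kf$ on $\Delta$ and vanishing jets on $\Delta_+$, check the corner compatibility conditions of Theorem T4 (which reduce to $f^{(j)}(\pm1)=0$ for $j\le m$), and then verify that $H:=\partial_{n_1}F_++i\partial_{\tau_1}F_+$ together with all its mixed $\partial_{\tau_1},\partial_{n_1}$-derivatives up to order $m-1$ vanish on $\Delta$ (and, via the zero data, on $\Delta_+$). That is exactly the paper's argument, and you spell out the verification on $\Delta_+$ in somewhat more detail.

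The opening sentence, however, contains a genuine gap: the claim that ``the case of $F_-$ follows on replacing $f$ by $-f$'' is not correct. On $\Omega_-$ the normal and tangent fields satisfy $\tau_1=-in_1$, so \eqref{eq:partialbar} acquires a minus sign, $2\bar n_1\bar\partial F_-=\partial_{n_1}F_--i\partial_{\tau_1}F_-$. Repeating the jet computation, the correct prescription is $f_{1,k}:=-i^k\partial_{\tau_1}^kf$, \emph{not} $(-i)^k\partial_{\tau_1}^k(-f)=-(-i)^k\partial_{\tau_1}^kf$; the two disagree for odd $k$. If you literally replace $f$ by $-f$ in the $\Omega_+$ recipe you obtain, for $k_1+k_2\le m-1$,
\[
\bigl(\partial_{\tau_1}^{k_1}\partial_{n_1}^{k_2}H\bigr)\big|_{\Delta}
=\bigl[-(-i)^{k_2+1}+i(-i)^{k_2}\bigr]\partial_{\tau_1}^{k_1+k_2+1}f
=2i(-i)^{k_2}\partial_{\tau_1}^{k_1+k_2+1}f,
\]
which does not vanish, and the desired property $\bar\partial F_-\in\cf^{m-1,\varsigma}_0(\overline\Omega_-)$ fails. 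The paper's proof makes exactly this sign adjustment for $\Omega_-$ explicit (``set $f_{1k}:=-i^kf^{(k)}$ since this time the normal and tangent on $\Delta$ satisfy $\tau=-in$''), and you should do the same rather than appealing to a symmetry that is not actually present (note in particular that $z\mapsto\bar z$ does not commute with $\bar\partial$, so there is no easy reflection argument either).
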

\begin{proof}
First, we consider the case of $\Omega_+$. By setting $f_{1k}:=(-i)^kf^{(k)}$, $k\in\{0,\ldots,m\}$, we see that $f_{1k}\in\cf^{s-k}(\Delta)$, $k\in\{0,\ldots,m\}$. Moreover, after putting $f_{2k}\equiv0$, we observe that
\[
f_{jk_1}^{(k_2)}(\pm1)=0, \quad k_1+k_2\in\{0,\ldots,m\}, \quad j=1,2.
\]
Then the existence of $F_+$ in $\cf^{m,\varsigma}(\overline\Omega_+)$ follows from Theorem~\hyperref[thm:T4]{T4}. The fact that $\bar\partial F_+\in\cf^{m-1,\varsigma}_0(\overline\Omega_+)$ can be shown exactly as in Proposition~\ref{pr:smooth2}. In the case of $\Omega_-$ the only difference is that we need to set $f_{1k}:=-i^kf^{(k)}$ since this time the normal and tangent on $\Delta$ satisfy $\tau=-in$.
\end{proof}

\section{Scalar Boundary Value Problems}
\label{sec:bvp}

In this section we dwell on smoothness properties of certain integral operators. 

\subsection{Integral Operators}
\label{subsec:io}

Below we introduce contour and area integral operators and explain the solution of a certain $\bar\partial$-problem.

Let $\phi$ be an $\lf^p$, $p>1$, function on $\Delta$, where $\lf^p=\lf^p(\Delta)$ stands for the space of functions with $p$-summable modulus on $\Delta$ with respect to arclength differential $|dt|$. The Cauchy integral operator on $\Delta$ is defined by
\begin{equation}
\label{eq:oc}
\oc\phi(z) := \oc_\Delta\phi(z) = \frac{1}{2\pi i}\int_\Delta\frac{\phi(t)}{t-z}dt, \quad z\in D.
\end{equation}
It is known that $\oc\phi$ is a holomorphic function in $D$ with $\lf^p$ traces on $\Delta$, i.e., non-tangential limits a.e. on $\Delta$, from above and below denoted by $\oc^{\pm}\phi$. These traces are connected by the Sokhotski-Pemelj formulae \cite[Sec. I.4.2]{Gakhov}, i.e.,
\begin{equation}
\label{eq:sokhotski}
\oc^+\phi-\oc^-\phi = \phi \quad \mbox{and} \quad \oc^+\phi+\oc^-\phi = \os\phi, \quad \mbox{a.e. on} \quad \Delta,
\end{equation}
where $\os$ is the singular integral operator on $\Delta$ given by
\begin{equation}
\label{eq:os}
\os\phi(\tau) := \os_\Delta\phi(\tau)= \frac{1}{\pi i}\int_\Delta\frac{\phi(t)}{t-\tau}dt, \quad \tau\in \Delta^\circ,
\end{equation}
with the integral being understood in the sense of the principal value.

Let now $\Omega$ be a simply connected bounded domain with smooth boundary $\Gamma$. We define $\oc_\Gamma$ and $\os_\Gamma$ by \eqref{eq:oc} and \eqref{eq:os} integrating this time over $\Gamma$ rather than $\Delta$. The Sokhotski-Plemelj formulae \eqref{eq:sokhotski} still hold for $\phi\in\lf^p(\Gamma)$, $p>1$, with the only difference that now $\oc_\Gamma\phi$ is a sectionally holomorphic function and therefore $\oc_\Gamma^+\phi$ is the trace of $\oc_\Gamma\phi$ from within $\Omega$ and $\oc_\Gamma^-\phi$ is the trace of $\oc_\Gamma\phi$ from within $\overline\C\setminus\overline\Omega$.

Concerning the smoothness of $\oc_\Gamma\phi$ the following is known. If $\phi\in\cf^{0,\varsigma}(\Gamma)$, $\varsigma\in(0,1)$, then $\oc_\Gamma\phi\in\cf^{0,\varsigma}(\overline\Omega)$ \cite[Sec. 5.5.1]{Gakhov}. In particular, this means that $\oc_\Gamma\phi$ extends continuously from $\Omega$ to $\Gamma$. Further, if $\phi$ is continuously differentiable on $\Gamma$, then $\oc^\prime_\Gamma\phi=\oc_\Gamma\phi^\prime$ \cite[Sec. 4.4.4]{Gakhov}. Thus, we may conclude that when $\phi\in\cf^{m,\varsigma}(\Gamma)$, $m\in\Z_+$, $\varsigma\in(0,1)$, then $\oc_\Gamma\phi\in\cf^{m,\varsigma}(\overline\Omega)$.

Let now $\phi\in\lf^p(\Omega)$. The Cauchy area integral on $\Omega$ is defined as
\begin{equation}
\label{eq:ok}
\ok\phi(z) := \frac{1}{2\pi i}\iint_\Omega\frac{\phi(\zeta)}{\zeta-z}d\zeta\wedge d \bar\zeta, \quad z\in\Omega.
\end{equation}
Then it is well-known \cite[Sec. 4.9]{AstalaIwaniecMartin} that
\begin{equation}
\label{eq:okreverse}
\bar\partial\ok\phi = \phi \quad \mbox{and} \quad \partial\ok\phi=\ob\phi,
\end{equation}
in the distributional sense, where $\ob$ is the Beurling transform, i.e.,
\begin{equation}
\label{eq:beurling}
\ob\phi(z) := \frac{1}{2\pi i}\iint_\Omega\frac{\phi(\zeta)}{(\zeta-z)^2}d\zeta\wedge d \bar\zeta, \quad z\in\Omega,
\end{equation}
and the integral is understood in the sense of the principal value. 

The transformation $\ok$ defines a bounded operator from $\lf^p(\Omega)$ into $\lf^{2p/(2-p)}(\Omega)$ for $p\in(1,2)$, \cite[Thm. 4.3.8]{AstalaIwaniecMartin}, and into $\cf^{1-2/p}(\overline\Omega)$ for $p\in(2,\infty)$, \cite[Thm. 4.3.13]{AstalaIwaniecMartin}. Since nothing prevents us from taking $z$ outside of $\overline\Omega$, $\ok\phi$ is, in fact, defined throughout $\overline\C$ and is clearly holomorphic outside of $\overline\Omega$ and vanishes at infinity. Moreover, $\ok\phi$ is continuous across $\Gamma$ when $p\in(2,\infty)$. The latter can be easily seen if we continue $\phi$ by zero to a larger domain, say $\widetilde\Omega$, and observe that this extension is in $\lf^p(\widetilde\Omega)$. 

The Beurling transform $\ob$ defines a bounded operator from a weighted space $\lf^p_v(\C):=\{f:f^pv\in\lf^p(\C)\}$ into itself when the non-negative function $v$ is an \emph{$A_p$-weight (Muckenhoupt weight)}, $p\in(1,\infty)$ \cite[Thm. 4.9.6]{AstalaIwaniecMartin}. Let $\phi\in\lf^p(\Omega)$. We can suppose that $\phi\in\lf^p(\C)$ with $\phi\equiv0$ outside of $\Omega$ and therefore $\phi/\sr\in\lf^p_{|\sr|^p}(\C)$. It holds that $|\sr|^p$ is an $A_p$-weight for $p>2$ \cite[Sec. 9.1.b]{Grafakos}. Thus, $\ob(\phi/\sr)\in\lf^p_{|\sr|^p}(\C)$ and therefore 
\begin{equation}
\label{eq:beurlinglp}
\phi\in\lf^p(\Omega) \quad \mbox{implies} \quad \sr\ob(\phi/\sr)\in\lf^p(\Omega), \quad p>2.
\end{equation} 

Finally, we point out that $\phi\in\sof^1_p(\Omega)$ can be recovered by means $\oc_\Gamma$ and $\ok$ in the following fashion:
\begin{equation}
\label{eq:cthnaf}
\phi = \oc_\Gamma\phi + \ok\bar\partial\phi \quad \mbox{a.e. in} \quad \Omega,
\end{equation}
which is the Cauchy-Green formula for Sobolev functions.

\subsection{Functions of the Second Kind}
\label{subsec:secondkind}

Let $R_n$ be given by \eqref{eq:skind} with $q_n$ satisfying \eqref{eq:ortho} and $w_n$ defined as in \eqref{eq:varyingweight}. Clearly, $R_n$ is holomorphic in $D$, and it vanishes at infinity with order at least $n+1$, i.e., $R_n = O(z^{-n-1})$ as $z\to\infty$, on account of \eqref{eq:ortho}. It is also clear that $R_n=2\oc(q_nw_n)$. Thus, it holds by \eqref{eq:sokhotski} that 
\[
R_n^+-R_n^- = 2q_nw_n.
\]
Further, since $q_nw_n/w=q_nh_nh/v_n$ is H\"older continuous by the conditions of Theorem~\ref{thm:sa}, we have that
\begin{equation}
\label{eq:rnnear1}
R_n = \left\{
\begin{array}{ll}
O(|1-z|^\alpha), & \mbox{if} \quad \alpha<0, \smallskip \\
O(\log|1-z|), & \mbox{if} \quad \alpha=0, \smallskip \\
O(1), & \mbox{if} \quad \alpha>0,
\end{array}
\right.
\end{equation}
and analogous asymptotics holds near $-1$. Indeed, the case $\alpha<0$ follows from \cite[Sec. I.8.3 and I.8.4]{Gakhov}. (Observe that we defined $(1-t)^\alpha$, $t\in \Delta^\circ$, as the values on $\Delta$ of $(1-z)^\alpha$, where the latter is holomorphic outside of the branch cut taken along $\Delta_r$. However, $(1-t)^\alpha$ equivalently can be regared as the boundary values of $(1-z)^\alpha$ on $\Delta^+$, where the latter is holomorphic outside of the branch cut taken along $\Delta_l\cup\Delta$. Hence, the analysis in \cite[Sec. I.8.3]{Gakhov} indeed applies to the present situation.) The case $\alpha=0$ follows from \cite[Sec. I.8.1 and I.8.4]{Gakhov}. Finally, the case $\alpha>0$ holds since $R_n(1)$ exists for such $\alpha$ as $w_n(t)/(t-1)$ is integrable near 1 in this situation.

\subsection{Szeg\H{o} Functions}
\label{subsec:szego}

Let $\theta\in\cf^{m,\varsigma}$, $m\in\Z_+$, $\varsigma(0,1]$, and $h:=e^\theta$. The definition of the Szeg\H{o} function given in \eqref{eq:szf} can be rewritten as
\[
\szf_h = \exp\left\{\sr\oc\left(\frac{\theta}{\sr^+}\right)-\frac12\int\theta d\ed\right\}.
\]
Note that decomposition \eqref{eq:szego} easily follows from the Sokhotski-Plemelj formulae \eqref{eq:sokhotski}. Moreover, as the lemma in the next section shows, the traces $\szf_h^\pm$ belong to $\cf^{m,\varsigma^\prime}$, $0<\varsigma^\prime<\varsigma$, and $\szf_h^+(\pm1)=\szf_h^-(\pm1)$. In particular, the functions\footnote{Here we slightly abuse the notation and use superscripts $+$ and $-$ as a part of the symbol for the function. However, in Lemma~\ref{lem:extension} we shall construct a function $c_h$ whose traces on $\Delta$ will coincide with $c_h^\pm$.} $c_h^+:=\szf_h^+/\szf_h^-$ and $c_h^-:=\szf_h^-/\szf_h^+$ are continuous on $\Delta$ and assume value 1 at $\pm1$. It also follows from the Sokhotski-Plemelj formulae that
\begin{equation}
\label{eq:c}
c_h^\pm = \exp\left\{\sr^\pm\os\left(\frac{\theta}{\sr^+}\right)\right\}.
\end{equation}

The following facts are explained in detail in \cite[Sec. 3.2 and 3.3]{uBY3}. First, if $\theta_1,\theta_2\in\cf^{m,\varsigma}$, then $\szf_{h_1h_2} = \szf_{h_1}\szf_{h_2}$. Second, if $\{\theta_n\}$ is a normal family in some neighborhood of $\Delta$ then $\{\szf_{h_n}\}$ is a normal family in $D$. If, in addition, $\{\theta_n\}$ converges then $\{\szf_{h_n}\}$ converges as well and the convergence is \emph{uniform} on the closure of $D$, that is, including the boundary values from each side. 

Third, the uniqueness of decomposition \eqref{eq:szego}, which was shown, for instance, in \cite[eq. (2.7) and after]{Y10}, implies the following formula for the Szeg\H{o} function of the polynomial $v_n$, $\deg(v_n)\leq2n$, with zeros in~$D$:
\begin{equation}
\label{eq:szfpoly}
\szf_{v_n^2}=\szf_{v_n}^2 = \frac{1}{\gm_{v_n}}\frac{v_n}{r_n\map^{2n}},
\end{equation}
where $r_n$ was defined in \eqref{eq:rn}.

Fourth, observe that it is possible to define continuous arguments of $(z+1)/\map(z)$ and $(z-1)/\map(z)$ that vanish on the real axis in some neighborhood of infinity. Therefore it holds that
\begin{equation}
\label{eq:szfw}
\szf_w(z) = \left(2\frac{z-1}{\map(z)}\right)^{\alpha/2} \left(2\frac{z+1}{\map(z)}\right)^{\beta/2} \quad \mbox{and} \quad \gm_w = 2^{-(\alpha+\beta)},
\end{equation}
where $w$ was defined in \eqref{eq:wab} and the branches of the power functions are taken such that the positive reals are mapped into the positive reals.

Finally, using \eqref{eq:szfw} with $w=w(1/2,1/2;\cdot)$ we have that
\[
\szf_w^+(t)\szf_w^-(t)=2\sqrt{1-t^2} = -2i\sr^+(t) \quad t\in\Delta.
\]
Hence, we get that
\begin{equation}
\label{eq:szfsr}
\szf_{\sr^+} = \sqrt{2\sr/\map} \quad \mbox{and} \quad \gm_{\sr^+}=i/2,
\end{equation}
where, as usual, the branch of the square root is chosen so that $\szf_{\sr^+}$ is positive for large positive reals. It will be useful for us later to note that
\begin{equation}
\label{eq:szfsrpm}
\left(\map\szf_{\sr^+}\right)^{\pm} = (\szf_{\sr^+}^\pm)^2\frac{\map^\pm}{\szf_{\sr^+}^\pm\szf_{\sr^+}^\mp}\szf_{\sr^+}^\mp = (\szf_{\sr^+}^\pm)^2\frac{i\map^\pm}{\pm2\sr^\pm}\szf_{\sr^+}^\mp = \pm i\szf_{\sr^+}^\mp,
\end{equation}
where we used \eqref{eq:szego} and \eqref{eq:szfsr}.

\subsection{Smoothness of a Singular Integral Operator}

In this section we show that the boundary values on $\Delta$ of the Szeg\H{o} function of $e^\theta$ have essentially the same Sobolev or H\"older smoothness as $\theta$. 

We start with the case of functions in $\sof^{1-1/p}_p$, $p\in(2,\infty)$. Observe that $\sof^{1-1/q}_q\supset\sof^{1-1/p}_p$ when $q<p$, which is immediate from Definition~\ref{df:sobolev}.

\begin{prop}
\label{lem:smooth1}
Let $\theta\in\sof^{1-1/p}_p$, $p\in(2,\infty)$. Then 
\[
\sr^\pm\os(\theta/\sr^+) = \pm d +\sr^\pm\ell, \quad d(\pm1)=0, 
\]
where $d\in\sof^{1-1/q}_q$ for any $q\in(2,p)$ and $\ell$ is a polynomial, $\deg(\ell)\leq1$.
\end{prop}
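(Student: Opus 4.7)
The plan is to split $\theta=\ell_0+\tilde\theta$ with $\ell_0(t)=a+bt$ the degree-one polynomial satisfying $\ell_0(\pm 1)=\theta(\pm 1)$, so that $\tilde\theta\in\sof_p^{1-1/p}$ vanishes at $\pm 1$, and to treat the two pieces by different means. For the polynomial piece I would first record two explicit identities on $\Delta^\circ$, namely $\os(1/\sr^+)\equiv 0$ and $\os(t/\sr^+)\equiv -1$. Both follow from the Sokhotski-Plemelj relations \eqref{eq:sokhotski} applied to the sectionally holomorphic function $1/\sr$: since $1/\sr$ has jump $2/\sr^+$ across $\Delta$, vanishes at infinity, and is holomorphic in $D$, one gets $\oc(1/\sr^+)=1/(2\sr)$; taking $\oc^++\oc^-$ gives the first identity, while the second follows upon writing $t=(t-z)+z$ in $\oc(t/\sr^+)$ and invoking $\int_\Delta dt/\sr^+=-i\pi$ (which comes from $\int d\ed=1$ via \eqref{eq:geommean}). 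Linearity then yields $\sr^\pm\os(\ell_0/\sr^+)=-b\,\sr^\pm$, a contribution already of the desired form $\sr^\pm\ell$ with $\ell$ constant. The task therefore reduces to proving the decomposition for $\tilde\theta$.

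Set $F:=\oc(\tilde\theta/\sr^+)$ and $G:=\sr F$ in $D$; \eqref{eq:sokhotski} yields $G^++G^-=\tilde\theta$ and $G^+-G^-=\sr^+\os(\tilde\theta/\sr^+)$ on $\Delta$, so the goal is to identify $G^+$. By Proposition~\ref{pr:corners1} applied to $\tilde\theta$, there exists $\tilde\theta_+\in\sof_p^1(\Omega_+)$ with $\tilde\theta_+|_\Delta=\tilde\theta$ and $\tilde\theta_+|_{\Delta_+}\equiv 0$, where $\Omega_+$ is a lens-shaped domain bounded by $\Delta$ and an auxiliary analytic arc $\Delta_+$. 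Since $\sr$ is holomorphic and non-vanishing in $\Omega_+$ (its zeros lie on $\partial\Omega_+$) and $\tilde\theta_+$ is H\"older continuous of exponent $1-2/p$ vanishing at $\pm 1$ by \eqref{eq:sobimb}, the function $\tilde\theta_+/\sr$ belongs to $\sof_{q_0}^1(\Omega_+)$ for some $q_0>1$ via H\"older pairing against $1/\sr\in L^r(\Omega_+)$ with $r<4$. Hence the Cauchy-Green formula \eqref{eq:cthnaf} applies to $\tilde\theta_+/\sr$, and one checks that $\oc_{\partial\Omega_+}(\tilde\theta_+/\sr)(z)$ reduces to $F(z)$ for $z\in\Omega_+$: the $\Delta_+$-portion of the boundary contributes zero and the $\Delta$-portion reproduces $\oc(\tilde\theta/\sr^+)$. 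Multiplying through by $\sr$ delivers the key identity
\[
G(z)=\tilde\theta_+(z)-V(z),\qquad V(z):=\sr(z)\,\ok_{\Omega_+}\!\bigl(\bar\partial\tilde\theta_+/\sr\bigr)(z),\quad z\in\Omega_+,
\]
whose boundary value from $\Omega_+$ gives $\sr^+\os(\tilde\theta/\sr^+)=2G^+-\tilde\theta=\tilde\theta-2\,V|_\Delta$.

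The hard part is to establish $V\in\sof_q^1(\Omega_+)$ for every $q\in(2,p)$. The easy derivative is $\bar\partial V=\sr\cdot(\bar\partial\tilde\theta_+/\sr)=\bar\partial\tilde\theta_+\in L^p(\Omega_+)\subset L^q(\Omega_+)$. Writing $\partial V=\sr'\,\ok_{\Omega_+}(\bar\partial\tilde\theta_+/\sr)+\sr\,\ob_{\Omega_+}(\bar\partial\tilde\theta_+/\sr)$, the second summand belongs to $L^p(\Omega_+)$ by the Muckenhoupt weight estimate \eqref{eq:beurlinglp} applied with $\phi=\bar\partial\tilde\theta_+$. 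The genuine technical obstacle is the first summand, since $\sr'=z/\sr$ has a $|z\mp 1|^{-1/2}$ singularity at $\pm 1$ and therefore lies only in $L^r(\Omega_+)$ for $r<4$. I would combine H\"older's inequality with the mapping properties of $\ok$ recorded after \eqref{eq:okreverse}: choosing $r<4$ close to $4$, place $\bar\partial\tilde\theta_+/\sr\in L^s(\Omega_+)$ with $1/s=1/p+1/r$, then $\ok(\bar\partial\tilde\theta_+/\sr)\in L^{2s/(2-s)}(\Omega_+)$ when $s<2$ or $\cf^{0,1-2/s}(\overline\Omega_+)$ when $s>2$, and a further H\"older pairing with $\sr'\in L^r$ yields $\sr'\,\ok(\bar\partial\tilde\theta_+/\sr)\in L^q(\Omega_+)$ for any prescribed $q\in(2,p)$ by tuning $r$. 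Once $V\in\sof_q^1(\Omega_+)$, the polygonal trace theorem (converse direction of Theorem~\hyperref[thm:T3]{T3}) gives $V|_\Delta\in\sof_q^{1-1/q}$; moreover $V(\pm 1)=0$ because $\sr(\pm 1)=0$ while $\ok(\bar\partial\tilde\theta_+/\sr)$ is continuous up to $\overline\Omega_+$ by \eqref{eq:sobimb}. Setting $d:=\tilde\theta-2V|_\Delta\in\sof_q^{1-1/q}$ with $d(\pm 1)=0$ and combining with the polynomial piece produces the required decomposition $\sr^\pm\os(\theta/\sr^+)=\pm d+\sr^\pm\ell$ with $\ell(t)=-b$.
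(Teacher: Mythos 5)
Your argument follows essentially the same route as the paper's: split $\theta = \ell_0 + \tilde\theta$ with $\ell_0$ the affine interpolant at $\pm1$, represent the singular integral via the Cauchy--Green formula \eqref{eq:cthnaf} applied to an extension of $\tilde\theta$, and estimate $\partial(\sr\ok) = (z/\sr)\ok + \sr\ob$ using the Muckenhoupt bound \eqref{eq:beurlinglp} for the Beurling term. The only real difference is that you work over the polygonal lens $\Omega_+$ with the extension from Proposition~\ref{pr:corners1}, where the paper works over a smooth domain $\Omega$ with boundary $\Gamma\supset\Delta$ and extends by zero on $\Gamma\setminus\Delta$. Both are fine. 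But there is a genuine gap in your treatment of the term $\sr'\ok(\bar\partial\tilde\theta_+/\sr)$ when $p>4$, and in that range your claimed decomposition, with $\ell$ taken to be the \emph{constant} $-b$, is actually false.

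The issue: $\sr'=z/\sr$ lies in $L^r(\Omega_+)$ only for $r<4$, and $\ok(\bar\partial\tilde\theta_+/\sr)$ is either in $L^{2s/(2-s)}$ (if $s<2$) or merely bounded H\"older (if $s>2$). Tracing your own exponent arithmetic, H\"older pairing never produces $\sr'\ok\in L^q(\Omega_+)$ for any $q\geq 4$: the bounded case caps you at $q<4$ outright, and in the case $s<2$ one gets $1/q = 2/s - 1/p - 1/2 > 1/2 - 1/p$, i.e.\ $q<2p/(p-2)<4$ when $p>4$. The obstruction is not an artifact of the method: generically $\ok(\bar\partial\tilde\theta_+/\sr)(\pm1)\neq 0$, so $V|_\Delta=\sr^+\ok|_\Delta$ behaves like a constant times $|t\mp1|^{1/2}$ near $\pm1$, which (via $\sof^{1-1/q}_q\subset\cf^{0,1-2/q}$ with $1-2/q>1/2$) rules out $d=\tilde\theta-2V|_\Delta\in\sof^{1-1/q}_q$ for any $q>4$. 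The paper avoids this precisely by subtracting the degree-one polynomial $\ell_2$ interpolating $\ok(\bar\partial\Theta/\sr)$ at $\pm1$ (for $p>4$, $\ok$ is H\"older by the mapping property cited after \eqref{eq:okreverse}), so that $(z/\sr)(\ok-\ell_2)=O(|z^2-1|^{(s-4)/2s})$ with exponent in $(-\tfrac12,-\tfrac2p)$, hence in $L^q$ for all $q\in(2,p)$; the compensating $\sr\ell_2$ contribution is then absorbed into the polynomial part, giving $\ell=\ell_1+2\ell_2$ of degree $\leq1$. You should insert this correction to make your argument close for $p>4$. (A smaller point: you invoke a converse to Theorem~T3 — a trace map $\sof^1_q(\Omega_+)\to\sof^{1-1/q}_q(\Delta)$ on a corner domain — which is not stated in the paper; the paper sidesteps this by working on a smooth $\Omega$ and using the two-sided Theorem~T1.)
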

\begin{proof}
It follows immediately from Cauchy integral formula and the Sokhotski-Plemelj formulae \eqref{eq:sokhotski} that
\begin{equation}
\label{eq:singzero}
\oc\left(\frac{1}{\sr^+}\right) = \frac{1}{2\sr^+} \quad \mbox{and} \quad \os\left(\frac{1}{\sr^+}\right) = 0.
\end{equation}
Hence, for any polynomial $\ell_0$ and $\tau\in\Delta^\circ$ it holds that
\[
\os\left(\frac{\ell_0}{\sr^+}\right)(\tau) = \frac{1}{\pi i}\int_\Delta\frac{\ell_0(t)}{t-\tau}\frac{dt}{\sr^+(t)} = \frac{1}{\pi i}\int_\Delta\frac{\ell_0(t)-\ell_0(\tau)}{t-\tau}\frac{dt}{\sr^+(t)} = \ell_1(\tau),
\]
where $\ell_1$ is a polynomial, $\deg(\ell_1)<\deg(\ell_0)$, since $\frac{\ell_0(\cdot)-\ell_0(\tau)}{\cdot-\tau}$ is a polynomial in $\tau$. Choose $\ell_0$ to be the polynomial interpolating $\theta$ at $\pm1$, $\deg(\ell_0)\leq1$. Then
\[
\sr^\pm\os\left(\frac{\theta}{\sr^+}\right) = \sr^\pm\os\left(\frac{\theta-\ell_0}{\sr^+}\right) + \sr^\pm\ell_1.
\]
Thus, it holds by \eqref{eq:sokhotski} that
\[
\sr^\pm\os\left(\frac{\theta}{\sr^+}\right) = 2\sr^\pm\left(\oc^+\left(\frac{\theta-\ell_0}{\sr^+}\right)-\ell_2\right) \mp (\theta-\ell_0) +\sr^\pm(2\ell_2+\ell_1),
\]
where $\ell_2$, $\deg(\ell_2)\leq1$ will be chosen later. Set $\ell:=\ell_1+2\ell_2$ and
\[
d := 2\sr^+\left(\oc^+\left(\frac{\theta-\ell_0}{\sr^+}\right)-\ell_2\right) - (\theta-\ell_0) =: 2d_1-(\theta-\ell_0).
\]
Clearly, to prove the proposition, we need to show that $d_1\in\sof^{1-1/q}_q$ and $d_1(\pm1)=0$.

Let $\Gamma$ be any infinitely smooth curve containing $\Delta$. Assume also that the inner domain of $\Gamma$, say $\Omega$, lies to the left of $\Delta$, i.e., $\Delta^+$ is accessible from within $\Omega$. Define $\theta_{e|\Delta}:=\theta-\ell_0$ and $\theta_{e|\Gamma\setminus\Delta}\equiv0$. It is clear that $\theta_e\in\sof^{1-1/p}_p(\Gamma)$. Moreover, since $\theta_e$ is identically zero on $\Gamma\setminus\Delta$, it holds by \eqref{eq:sokhotski} that
\[
d_1 = \sr^+\left(\oc^+\left(\frac{\theta_e}{\sr^+}\right)-\ell_2\right) = \sr\left(\oc_\Gamma^+\left(\frac{\theta_e}{\sr}\right)-\ell_2\right),
\]
where from now on we agree that $\sr_{|\Gamma}$ is the trace of $\sr$ from within $\Omega$, i.e. it is equal to $\sr^+$ on $\Delta$. Furthermore, we can regard $d_1$ as a function holomorphic in $\Omega$. Thus, by Theorem~\hyperref[thm:T1]{T1}, to show that $d_{1|\Delta}\in\sof^{1-1/q}_q$ it is enough to prove that $d_1\in\sof^1_q(\Omega)$, $q\in(2,p)$. As $d_1$ is holomorphic in $\Omega$, it is, in fact, sufficient to get that $\partial d_1\in\lf^q(\Omega)$.

Now, Proposition~\ref{pr:smooth1} insures that there exists $\Theta\in\sof^1_p(\Omega)$ such that $\Theta_{|\Gamma}=\theta_e$. Observe that $\Theta/\sr\in\sof^1_s(\Omega)$ for any $s\in[1,\frac{4p}{p+4})$ since $\partial\Theta/\sr,\bar\partial\Theta/\sr\in\lf^s(\Omega)$ by H\"older inequality and $\Theta/\sr^3\in\lf^s(\Omega)$ by the estimate
\[
\left|\frac{z\Theta(z)}{\sr^3(z)}\right| \leq \const|z^2-1|^{-2/p-1/2}, \quad z\in\overline\Omega,
\]
where we used the definition of $\Theta$ and \eqref{eq:sobimb}. Thus, Cauchy-Green formula \eqref{eq:cthnaf} applied to $\Theta/\sr$ implies that
\[
\partial d_1 = \partial\left(\Theta - \sr\ok\left(\frac{\bar\partial\Theta}{\sr}\right) - \sr\ell_2\right) = \partial\Theta - \partial\left(\sr\ok\left(\frac{\bar\partial\Theta}{\sr}\right)\right) - \sr\ell_2^\prime - \frac{z\ell_2}{\sr}
\]
a.e. in $\Omega$, where we used that $\bar\partial\sr=0$. Since $\partial\Theta\in\lf^p(\Omega)$ and $\sr\ell_2^\prime$ is bounded, it is only necessary to show that
\begin{eqnarray}
\partial\left(\sr\ok\left(\frac{\bar\partial\Theta}{\sr}\right)\right)- \frac{z\ell_2}{\sr} &=& \frac{z}{\sr}\left(\ok\left(\frac{\bar\partial\Theta}{\sr}\right)-\ell_2\right) + \sr\partial\ok\left(\frac{\bar\partial\Theta}{\sr}\right) \nonumber \\
{} &=& \frac{z}{\sr}\left(\ok\left(\frac{\bar\partial\Theta}{\sr}\right)-\ell_2\right) + \sr\ob\left(\frac{\bar\partial\Theta}{\sr}\right) \nonumber
\end{eqnarray}
belongs to $\lf^q(\Omega)$, $q\in(2,p)$, where we used \eqref{eq:okreverse}. The fact that $\sr\ob(\bar\partial\Theta/\sr)\in\lf^p(\Omega)$ follows from \eqref{eq:beurlinglp}. Now, to show that $(1/\sr)\left(\ok(\bar\partial\Theta/\sr)-\ell_2\right)\in\lf^q(\Omega)$, $q\in(2,p)$, recall that $\bar\partial\Theta/\sr\in\lf^s(\Omega)$ for any $s\in[1,\frac{4p}{p+4})$. So, as mentioned after \eqref{eq:beurling}, $\ok(\bar\partial\Theta/\sr)\in\lf^{2s/2-s}(\Omega)$ when $p\in(2,4]$, i.e., $s\in(\frac43,2)$; and $\ok(\bar\partial\Theta/\sr)\in\cf^{0,1-2/s}(\Omega)$ when $p\in(4,\infty)$, i.e., $s$ can be chosen to lie in $\left(2,\frac{4p}{p+4}\right)$. In the first case, we get that $(1/\sr)\ok(\bar\partial\Theta/\sr)\in\lf^q(\Omega)$, $q\in(2,p)$, simply by applying H\"older inequality once more. This shows that $\partial d_1\in\lf^q(\Omega)$, $q\in(2,p)$, when $p\in(2,4]$ with $\ell_2\equiv0$. In the second case, let $\ell_2$ be the polynomial interpolating $\ok(\bar\partial\Theta/\sr)$ at $\pm1$, $\deg(\ell_2)\leq1$. Then
\[
\left|\frac{z}{\sr(z)}\left(\ok\left(\frac{\bar\partial\Theta}{\sr}\right)-\ell_2\right)(z)\right| \leq \const|z^2-1|^{(s-4)/2s}, \quad z\in\overline\Omega.
\]
Since $\frac{s-4}{2s}\in\left(-\frac12,-\frac{2}{p}\right)$, it holds that $(1/\sr)\left(\ok(\bar\partial\Theta/\sr)-\ell_2\right)\in\lf^q(\Omega)$, $q\in(2,p)$, which shows that $\partial d_1\in\lf^q(\Omega)$, $q\in(2,p)$, when $p\in(4,\infty)$.

It only remains to show that $d_1(\pm1)=0$. As $\theta_e\in\cf^{1-2/p}(\Gamma)$ by \eqref{eq:simb} and $\theta_e(\pm1)=0$, the function $\oc^+(\theta_e/\sr^+)$ is either bounded near $\pm1$ or blows up there with the order strictly less than $1/2$ \cite[Sec. I.8.4]{Gakhov}, see also \eqref{eq:rnnear1}. Thus, $\sr^+\oc^+(\theta_e/\sr^+)$ vanishes at $\pm1$ and so does $d_1$.
\end{proof}

We continue with the case of functions in $\cf^{m,\varsigma}$.

\begin{prop}
\label{lem:smooth2}
Let $\theta\in\cf^{m,\varsigma}$, $m\in\Z_+$, $\varsigma\in(0,1]$. Then 
\[
\sr^\pm\os(\theta/\sr^+) = \pm d +\sr^\pm\ell, \quad  d^{(k)}(\pm1)=0, \quad k\in\{0,\ldots,m\},
\]
where $\ell$ is a polynomial, $\deg(\ell)\leq2m+1$, and $d\in\cf^{m,\varsigma}$ when $\varsigma\in\left(0,\frac12\right)\cup\left(\frac12,1\right)$, while $d\in\cf^{m,\varsigma-\epsilon}$ for arbitrarily  small $\epsilon>0$ when $\varsigma=\frac12,1$.
\end{prop}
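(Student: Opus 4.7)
The proof parallels Proposition~\ref{lem:smooth1} with Sobolev machinery replaced by Hölder machinery.

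\emph{Polynomial reduction.} Let $\ell_0$ be the Hermite interpolation polynomial matching $\theta$ and its arclength derivatives up to order $m$ at $\pm 1$; then $\deg\ell_0\le 2m+1$ and $\phi:=\theta-\ell_0\in\cf^{m,\varsigma}$ vanishes to order $m$ at $\pm 1$. Using $\os(1/\sr^+)=0$ and the bivariate polynomial nature of $(\ell_0(t)-\ell_0(\tau))/(t-\tau)$ (of degree at most $2m$ in $\tau$), the same computation as in Proposition~\ref{lem:smooth1} yields $\os(\ell_0/\sr^+)=\ell_1$, a polynomial of degree $\le 2m$. Sokhotski-Plemelj then gives, for any polynomial $\ell_2$,
\[
\sr^\pm\os(\theta/\sr^+) \;=\; 2\sr^\pm\bigl(\oc^+(\phi/\sr^+)-\ell_2\bigr)\;\mp\;\phi\;+\;\sr^\pm(2\ell_2+\ell_1),
\]
so with $\ell:=2\ell_2+\ell_1$ and $d:=2\sr^+(\oc^+(\phi/\sr^+)-\ell_2)-\phi$ the decomposition has the required form. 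It remains to select $\ell_2$ (of degree $\le 2m+1$, giving $\deg\ell\le 2m+1$) and to verify the stated regularity and endpoint vanishing of $d$.

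\emph{Hölder extension and Cauchy-Green.} Fix an infinitely smooth Jordan curve $\Gamma\supset\Delta$ bounding a domain $\Omega$ on the $+$-side of $\Delta$, and extend $\phi$ to $\phi_e\in\cf^{m,\varsigma}(\Gamma)$ by zero on $\Gamma\setminus\Delta$ (admissible since $\phi$ and its derivatives up to order $m$ vanish at $\pm 1$). Proposition~\ref{pr:smooth2} provides $\Phi\in\cf^{m,\varsigma}(\overline\Omega)$ with $\Phi_{|\Gamma}=\phi_e$ and $\bar\partial\Phi\in\cf^{m-1,\varsigma}_0(\overline\Omega)$. Since $\sr$ is holomorphic on $\Omega$ with boundary value $\sr^+$ on $\Delta$, applying the Cauchy-Green formula \eqref{eq:cthnaf} to $\Phi/\sr$ yields, in $\Omega$,
\[
\sr\,\oc_\Gamma(\Phi/\sr) \;=\; \Phi \;-\; \sr\,\ok(\bar\partial\Phi/\sr),
\]
with $\oc_\Gamma(\Phi/\sr)=\oc(\phi/\sr^+)$ (as $\Phi$ vanishes off $\Delta$). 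Taking the $+$-boundary value on $\Delta$ produces
\[
d \;=\; \phi \;-\; 2\sr^+\bigl[\ok(\bar\partial\Phi/\sr)+\ell_2\bigr]_{\Delta^+}.
\]

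\emph{Smoothness analysis and endpoint matching.} Choose $\ell_2$ to be the Hermite polynomial of degree $\le 2m+1$ interpolating $-\ok(\bar\partial\Phi/\sr)_{|\Delta}$ together with its tangential derivatives up to order $m$ at $\pm 1$. The core analytic task is to show
\[
\ok(\bar\partial\Phi/\sr)\in\cf^{m,\varsigma'}(\overline\Omega),
\]
with $\varsigma'=\varsigma$ for $\varsigma\in(0,1)\setminus\{1/2\}$ and $\varsigma'=\varsigma-\epsilon$ for $\varsigma\in\{1/2,1\}$. This is obtained by iterating the identities $\bar\partial\ok=I$ and $\partial\ok=\ob$ to generate higher derivatives, combined with boundedness of the Beurling transform $\ob$ on Hölder classes; the $\epsilon$-loss at $\varsigma=1$ reflects the standard failure of Hölder preservation by singular integrals at the Lipschitz exponent, while the loss at $\varsigma=1/2$ stems from the interaction of the $(z\mp 1)^{-1/2}$ singularity of $\sr^{-1}$ with the $\cf^{m-1,1/2}$ vanishing of $\bar\partial\Phi$ at $\pm 1$. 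Given this regularity and the choice of $\ell_2$, the quantity $\ok(\bar\partial\Phi/\sr)+\ell_2$ vanishes to order $m+1$ at $\pm 1$; multiplication by $\sr^+\sim|z\mp 1|^{1/2}$ yields a $\cf^{m,\varsigma'}$-function whose first $m$ arclength derivatives vanish at $\pm 1$. Combined with the analogous property of $\phi$, we conclude that $d\in\cf^{m,\varsigma'}$ with $d^{(k)}(\pm 1)=0$ for $k\in\{0,\ldots,m\}$.

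The main technical obstacle is precisely the Hölder analysis of $\ok(\bar\partial\Phi/\sr)$ near $\Delta$: simultaneously controlling the half-integer singularity of $\sr^{-1}$, the high-order boundary vanishing of $\bar\partial\Phi$, and the critical mapping properties of the Beurling transform in weighted Hölder spaces is what delivers both the smoothness of $d$ and the $\epsilon$-loss at the two critical exponents $\varsigma=1/2$ and $\varsigma=1$. The case $m=0$ is not directly covered by Proposition~\ref{pr:smooth2} and requires separate treatment via classical Plemelj-Privalov-type bounds for the weighted Cauchy transform.
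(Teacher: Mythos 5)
Your proof takes a genuinely different route from the paper's. The paper handles $\varsigma\in[1/2,1]$ by working directly on the curve with the singular integral operator $\os_\Gamma$, developing a chain of bespoke weighted Hölder estimates (Lemmas~\ref{lem:smooth21}--\ref{lem:smooth24}) that track how division by powers of $\sr$, multiplication by $\sr$, and application of $\os_\Gamma$ each shift the Hölder exponent of functions vanishing at $\pm1$; the case $\varsigma\in(0,1/2)$ is simply cited. You instead transplant the mechanism of Proposition~\ref{lem:smooth1} (the Sobolev case) — extend $\phi=\theta-\ell_0$ to a 2D function $\Phi$ via Proposition~\ref{pr:smooth2}, write $\oc(\phi/\sr^+)$ through the Cauchy--Green formula, and push the analysis onto $\ok(\bar\partial\Phi/\sr)$. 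The polynomial-reduction, Sokhotski--Plemelj, and Cauchy--Green steps are all correct and parallel the paper's Sobolev proof faithfully.

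The genuine gap is the central claim $\ok(\bar\partial\Phi/\sr)\in\cf^{m,\varsigma'}(\overline\Omega)$, which you assert but do not prove, and which carries essentially all the technical content of the proposition. Your one-sentence justification — iterating $\bar\partial\ok=\oi$, $\partial\ok=\ob$ and invoking ``boundedness of the Beurling transform on Hölder classes'' — does not address the actual difficulty: $\bar\partial\Phi/\sr$ is a Hölder function multiplied by a $|z^2-1|^{-1/2}$ singularity, so one is really in a weighted Hölder setting near $\pm1$, and the mapping properties of $\ok$ and $\ob$ there are not the textbook unweighted ones. To carry out the iteration rigorously you would need precisely the kind of estimates — that dividing a $\cf^{k,\upsilon}$ function vanishing to sufficient order by $\sr^{2N\pm1}$ lands in a definite Hölder class, and that the resulting functions behave well under the operators involved — that the paper proves from scratch in Lemmas~\ref{lem:smooth21}--\ref{lem:smooth24}, only adapted to two real dimensions and to $\ok,\ob$ rather than $\os_\Gamma$. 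Your proof therefore pushes the whole problem into an unverified black box. Additionally, you explicitly defer the $m=0$ case (Proposition~\ref{pr:smooth2} requires $m\ge1$), whereas the paper handles $m=0$ directly with Lemmas~\ref{lem:smooth22}--\ref{lem:smooth24}; and you do not restrict to $\varsigma\in[1/2,1]$ nor cite the external result the paper relies on for $\varsigma<1/2$, so your stated scope is larger than what your sketch could plausibly cover (for $m\ge1$ and $\varsigma<1/2$ even the boundedness of $\bar\partial\Phi/\sr$ near $\pm1$ becomes an issue).
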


When $\varsigma\in\left(0,\frac12\right)$, the conclusion of the proposition follows from \cite[Thm. 3]{DudSp00}. Therefore, we are required\footnote{The authors were surprised not to find this case in the literature.} to prove Proposition~\ref{lem:smooth2} only for $\varsigma\in\left[\frac12,1\right]$. To this end, we shall need several geometrical lemmas. In all of them we assume that $\Gamma$ is as in Proposition \ref{lem:smooth1} and we omit superscript $+$ for $\sr$ when dealing with the values of $\sr$ on $\Delta$. By $C_\Gamma$ we shall denote a constant such that $|\tau|\leq C_\Gamma$, $\tau\in\Gamma$. Moreover, $\tau_1$ and $\tau_2$ will stand for two different points on $\Gamma$ satisfying
\begin{equation}
\label{eq:cond2}
|1-\tau_1^2|\leq|1-\tau_2^2|.
\end{equation}

\begin{lem}
\label{lem:smooth21}
Let $N\in\N$ or $N=-1$. Then
\begin{equation}
\label{eq:geom3}
\left|\frac{1}{\sr^N(\tau_1)}-\frac{1}{\sr^N(\tau_2)}\right| \leq C_1|N|\max\left\{\frac{1}{|1-\tau_1^2|^{1+N/2}},\frac{1}{|1-\tau_2^2|^{1+N/2}}\right\} |\tau_1-\tau_2|,
\end{equation}
where $C_1$ is a constant depending only on $\Gamma$.
\end{lem}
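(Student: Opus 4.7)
The plan is to reduce all cases to the $N=-1$ estimate $|\sr(\tau_1)-\sr(\tau_2)|\leq C|\tau_1-\tau_2|/|\sr(\tau_1)|$ via an algebraic identity, and then to establish that baseline by distinguishing the regime where $\tau_1$ stays away from $\{\pm 1\}$ from the regime where it approaches them.

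For $N\in\N$ I would factor
\[
\frac{1}{\sr^N(\tau_1)}-\frac{1}{\sr^N(\tau_2)}=\frac{\sr(\tau_2)-\sr(\tau_1)}{\sr^N(\tau_1)\sr^N(\tau_2)}\sum_{k=0}^{N-1}\sr(\tau_1)^k\sr(\tau_2)^{N-1-k}.
\]
Hypothesis \eqref{eq:cond2} gives $|\sr(\tau_1)|\leq|\sr(\tau_2)|$, so the geometric sum is bounded in modulus by $N|\sr(\tau_2)|^{N-1}$, producing $|\sr^{-N}(\tau_1)-\sr^{-N}(\tau_2)|\leq N|\sr(\tau_1)-\sr(\tau_2)|/(|\sr(\tau_1)|^N|\sr(\tau_2)|)$. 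Substituting the baseline and invoking $|\sr(\tau_2)|\geq|\sr(\tau_1)|$ once more lands $|\sr(\tau_1)|^{N+2}=|1-\tau_1^2|^{1+N/2}$ in the denominator, which is precisely the maximum in \eqref{eq:geom3} under \eqref{eq:cond2}.

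For the baseline itself I would start from
\[
|\sr(\tau_1)-\sr(\tau_2)|=\frac{|\tau_1^2-\tau_2^2|}{|\sr(\tau_1)+\sr(\tau_2)|}\leq\frac{2C_\Gamma\,|\tau_1-\tau_2|}{|\sr(\tau_1)+\sr(\tau_2)|},
\]
so the matter reduces to a geometric lower bound $|\sr(\tau_1)+\sr(\tau_2)|\geq c_\Gamma|\sr(\tau_1)|$. I would split on whether $|\sr(\tau_1)|$ exceeds a fixed constant $c_0=c_0(\Gamma)>0$. If $|\sr(\tau_1)|\geq c_0$, both points lie in a compact subset of $\Gamma\setminus\{\pm 1\}$ where $\sr$ is smooth, and the mean value inequality alone yields the baseline, bypassing the lower bound. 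If $|\sr(\tau_1)|<c_0$, then $\tau_1$ is in a small neighborhood of some endpoint $\varepsilon\in\{\pm 1\}$; when $\tau_2$ is also in that neighborhood, the expansion $\sr(\tau)^2=2\varepsilon(\tau-\varepsilon)(1+O(\tau-\varepsilon))$ together with the smoothness of $\Gamma$ at $\varepsilon$ places $\sr(\tau_1)$ and $\sr(\tau_2)$, up to a factor $1+O(c_0)$, on two rays meeting at angle $\pi/2$ (one ray per smooth arc of $\Gamma$ incident to $\varepsilon$), so $|\sr(\tau_1)+\sr(\tau_2)|^2\geq(1-O(c_0))(|\sr(\tau_1)|^2+|\sr(\tau_2)|^2)$, which delivers the required lower bound for $c_0$ small enough. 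If instead $\tau_2$ is outside that neighborhood, then $|\tau_1-\tau_2|$ is bounded below by a positive constant depending only on $\Gamma$, while $|\sr(\tau_1)-\sr(\tau_2)|\leq 2\max_\Gamma|\sr|$, so the baseline follows by adjusting $C$.

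The main technical obstacle is the sector statement at the endpoints, for which one must identify precisely which of the two branches of $\sqrt{2\varepsilon(\tau-\varepsilon)}$, one on each smooth arc of $\Gamma$ meeting at $\varepsilon$, realizes the continuous boundary trace of $\sr$ from $\Omega$; the orthogonality of these two branches rests on the fact that crossing $\varepsilon$ along $\Gamma$ corresponds in the local coordinate $\tau-\varepsilon$ to a rotation by $\pi$, which becomes a rotation by $\pi/2$ after the square root. Once that branch structure is pinned down, the remainder of the argument is elementary algebra.
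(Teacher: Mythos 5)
Your argument is correct in substance and differs from the paper's in two ways. For $N\in\N$ you reduce \eqref{eq:geom3} to the baseline $N=-1$ estimate via the single factorization $a^{-N}-b^{-N}=\frac{b-a}{a^Nb^N}\sum_{k=0}^{N-1}a^kb^{N-1-k}$, using \eqref{eq:cond2} to dominate the geometric sum by $N|\sr(\tau_2)|^{N-1}$ and then $|\sr(\tau_2)|\geq|\sr(\tau_1)|$ to produce $|1-\tau_1^2|^{-(1+N/2)}$; the paper instead treats even $N$ (by expanding $1-((1-\tau_1^2)/(1-\tau_2^2))^{N/2}$) and odd $N$ (by peeling off one factor of $\sr$) separately before arriving at $N=-1$. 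Your unification is cleaner and gives the same constants. For the baseline $N=-1$ both arguments rest on the same geometric fact --- the trace of $\sr$ on $\Gamma$ from $\Omega$ has an argument jump of magnitude $\pi/2$ across each of $\pm1$, so $\sr(\tau_1)$ and $\sr(\tau_2)$ cannot nearly cancel for nearby $\tau_1,\tau_2$ --- but you make the branch geometry explicit via $\sr(\tau)^2=2\varepsilon(\tau-\varepsilon)(1+O(\tau-\varepsilon))$ and split on $|\sr(\tau_1)|\geq c_0$ versus $|\sr(\tau_1)|<c_0$, while the paper packages it into a continuous argument function $a$ of $\sr$ (uniformly continuous on compacts of $\Gamma\setminus\{\pm1\}$ with $\pi/2$ jumps at $\pm1$) and splits once on $|\tau_1-\tau_2|<\delta$ or not, obtaining $|a(\tau_1)-a(\tau_2)|<2\pi/3$ and thence $|\sr(\tau_1)+\sr(\tau_2)|\geq\tfrac12|\sr(\tau_2)|$ in one step.

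One caution about your first regime: the set $\{\tau\in\Gamma:|\sr(\tau)|\geq c_0\}$ is \emph{disconnected}, consisting of one arc in $\Delta$ and one in $\Gamma\setminus\Delta$, and these approach each other to distance of order $c_0^2$ near each endpoint. For $\tau_1,\tau_2$ on opposite arcs near the same endpoint, no arc of $\Gamma$ joining them avoids $\pm1$, and the straight chord crosses the cut $\Delta$ where $\sr$ jumps sign; so ``the mean value inequality alone yields the baseline'' is not literally available. What is true is that $\sr$ restricted to this compact set is Lipschitz with some finite constant $L(c_0,\Gamma)$, which suffices, but proving that Lipschitz bound for opposite-arc pairs near $\pm1$ again appeals to the $\pi/2$ separation of the two square-root branches --- i.e., to the very sector argument you deploy in your second regime. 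The paper's choice to split on $|\tau_1-\tau_2|$ rather than on $|\sr(\tau_1)|$ sidesteps this entirely, since the continuity-of-$a$ argument treats interior and near-endpoint pairs uniformly.
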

\begin{proof}
If $N$ is an even integer, then
\begin{eqnarray}
\left|\frac{1}{\sr^N(\tau_1)}-\frac{1}{\sr^N(\tau_2)}\right| &=& \frac{1}{|1-\tau_1^2|^{N/2}}\left|1-\left(\frac{1-\tau_1^2}{1-\tau_2^2}\right)^{N/2}\right| \nonumber \\
{} &=& \frac{|\tau_1^2-\tau_2^2|}{|1-\tau_1^2|^{N/2}|1-\tau_2^2|} \left|\sum_{j=0}^{N/2-1}\left(\frac{1-\tau_1^2}{1-\tau_2^2}\right)^j\right| \leq \frac{NC_\Gamma|\tau_1-\tau_2|}{|1-\tau_1^2|^{1+N/2}} \nonumber
\end{eqnarray}
by \eqref{eq:cond2}.  If $N$ is an odd integer, then
\begin{eqnarray}
\left|\frac{1}{\sr^N(\tau_1)}-\frac{1}{\sr^N(\tau_2)}\right| &=& \frac{1}{|1-\tau_1^2|^{N/2}|1-\tau_2^2|^{1/2}}\left|\sr(\tau_2)-\sr(\tau_1)\left(\frac{1-\tau_1^2}{1-\tau_2^2}\right)^{(N-1)/2}\right| \nonumber \\
{} &=& \frac{|\sr(\tau_1)-\sr(\tau_2)|}{|1-\tau_1^2|^{N/2}|1-\tau_2^2|^{1/2}}  +  \frac{(N-1)C_\Gamma|\tau_1-\tau_2|}{|1-\tau_1^2|^{N/2}|1-\tau_2^2|} \nonumber
\end{eqnarray}
by the first estimate and \eqref{eq:cond2}. Clearly, it only remains to prove the lemma for $N=-1$. It can be readily verified that it is enough to consider $|\tau_1-\tau_2|$ small enough. As $\sr$ is zero free in $\Omega$, interior of $\Gamma$, an argument function, say $a$, is well-defined and continuous in $\Omega$.  Since $\sr$ extends holomorphically across $\Delta^\circ$ and $\Gamma\setminus\Delta$, the trace of $a$ is uniformly continuous on any compact subset of $\Gamma\setminus\{\pm1\}$. Moreover, it also has one-sided limits at $\pm1$ with the jumps of magnitude $\pi/2$. Thus, there exists $\delta>0$ such that for all $|\tau_1-\tau_2|<\delta$ it holds that $|a(\tau_1)-a(\tau_2)|<\frac{2\pi}{3}$. Then 
\[
|\sr(\tau_1)+\sr(\tau_2)| \geq |\sr(\tau_2)|-\frac12|\sr(\tau_1)| \geq \frac12|\sr(\tau_2)|
\]
for $|\tau_1-\tau_2|<\delta$ by \eqref{eq:cond2} for the last inequality and therefore
\[
|\sr(\tau_1)-\sr(\tau_2)| \leq 2C_\Gamma\frac{|\tau_1-\tau_2|}{|\sr(\tau_1)+\sr(\tau_2)|} \leq 4C_\Gamma\frac{|\tau_1-\tau_2|}{|1-\tau_2^2|^{1/2}},
\]
which finishes the proof of the lemma.
\end{proof}

\begin{lem}
\label{lem:smooth22}
Let  $\varrho\in\cf^{0,\varsigma}(\Gamma)$, $\varsigma\in\left(\frac12,1\right)$, and $\varrho(\pm1)=0$. Then $(\varrho/\sr)\in\cf^{0,\varsigma-1/2}(\Gamma)$ and
\begin{equation}
\label{eq:claim1}
\left|\frac{\varrho(\tau_1)}{\sr(\tau_1)}-\frac{\varrho(\tau_2)}{\sr(\tau_2)}\right| \leq C_2 \min\left\{\frac{1}{|1-\tau_1^2|^{1/2}},\frac{1}{|1-\tau_2^2|^{1/2}}\right\} |\tau_1-\tau_2|^{\varsigma},
\end{equation}
where $C_2$ is a constant depending only on $\Gamma$.
\end{lem}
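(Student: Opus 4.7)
My plan is to establish the quantitative estimate \eqref{eq:claim1} first, after which the membership $\varrho/\sr\in\cf^{0,\varsigma-1/2}(\Gamma)$ will drop out as a corollary. Set $a_\tau:=\min\{|\tau-1|,|\tau+1|\}$ and $b_\tau:=\max\{|\tau-1|,|\tau+1|\}$, so that $|1-\tau^2|=a_\tau b_\tau$ and $|\sr(\tau)|=\sqrt{a_\tau b_\tau}$. Since $(\tau-1)-(\tau+1)=-2$, the triangle inequality forces $b_\tau\geq 1$, and clearly $b_\tau\leq 2C_\Gamma$; thus $a_\tau$ is comparable to $|1-\tau^2|$. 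The vanishing $\varrho(\pm 1)=0$ combined with H\"older continuity of $\varrho$ yields $|\varrho(\tau)|\leq\const\cdot a_\tau^\varsigma$, and therefore the pointwise bound $|\varrho(\tau)/\sr(\tau)|\leq\const\cdot a_\tau^{\varsigma-1/2}/b_\tau^{1/2}$; as $\varsigma>1/2$, this already shows that $\varrho/\sr$ extends by zero continuously to $\pm 1$.

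The proof of \eqref{eq:claim1} splits according to whether $|\tau_1-\tau_2|\leq|1-\tau_1^2|$ (the \emph{close} case) or $|\tau_1-\tau_2|>|1-\tau_1^2|$ (the \emph{far} case), bearing in mind that $|1-\tau_1^2|\leq|1-\tau_2^2|$ by \eqref{eq:cond2}. In the close case, the triangle inequality forces $|1-\tau_1^2|$ and $|1-\tau_2^2|$ to be comparable, and I would use the algebraic identity
\[
\frac{\varrho(\tau_1)}{\sr(\tau_1)}-\frac{\varrho(\tau_2)}{\sr(\tau_2)}=\frac{\varrho(\tau_1)-\varrho(\tau_2)}{\sr(\tau_2)}+\varrho(\tau_1)\left(\frac{1}{\sr(\tau_1)}-\frac{1}{\sr(\tau_2)}\right).
\]
The first summand is H\"older-controlled directly; the second, using $|\varrho(\tau_1)|\leq\const\cdot a_1^\varsigma$ together with Lemma~\ref{lem:smooth21} for $N=1$, produces the singular factor $(a_1b_1)^{3/2}$ in the denominator, which I would absorb via the smallness $|\tau_1-\tau_2|\leq a_1 b_1$ (the defining condition of the close case) together with $b_1\geq 1$. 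In the far case the decomposition is discarded; from $a_1\leq a_1 b_1<|\tau_1-\tau_2|$ and $a_2\leq a_1+|\tau_1-\tau_2|<2|\tau_1-\tau_2|$, the pointwise bound above plus the algebraic observation $a_1^{\varsigma-1/2}a_2^{1/2}\leq\sqrt{2}\,|\tau_1-\tau_2|^\varsigma$ estimates each fraction individually.

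The H\"older regularity is then immediate from \eqref{eq:claim1}: in the close case, $|\tau_1-\tau_2|\leq|1-\tau_2^2|$ converts the bound $|\tau_1-\tau_2|^\varsigma/|1-\tau_2^2|^{1/2}$ into $|\tau_1-\tau_2|^{\varsigma-1/2}$; in the far case, the pointwise estimate $|\varrho(\tau_i)/\sr(\tau_i)|\leq\const\cdot a_i^{\varsigma-1/2}\leq\const\cdot|\tau_1-\tau_2|^{\varsigma-1/2}$ already suffices since $a_i\leq 2|\tau_1-\tau_2|$. The main obstacle I anticipate is the careful bookkeeping of the three scales $|\tau_1-\tau_2|$, $a_1$, and $b_1$ (and their $\tau_2$-analogues): the singular factor $|1-\tau_1^2|^{-3/2}$ produced by Lemma~\ref{lem:smooth21} must be offset exactly by the $\varsigma$-order vanishing of $\varrho$ at the endpoints and by the geometric condition isolating the close case, so any misidentification of the regime produces spurious blow-ups.
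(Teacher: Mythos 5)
Your proposal is correct and follows essentially the same route as the paper: the same close/far dichotomy based on $|\tau_1-\tau_2|$ vs.\ $|1-\tau_1^2|$, the same splitting $\varrho(\tau_1)(1/\sr(\tau_1)-1/\sr(\tau_2))+(\varrho(\tau_1)-\varrho(\tau_2))/\sr(\tau_2)$ combined with Lemma~\ref{lem:smooth21} (for $N=1$) in the close case, and summing the individual pointwise bounds in the far case, with the H\"older membership then read off from the same inequalities. The $a_\tau,b_\tau$ bookkeeping and the absorption of the paper's separate $\tau_1=\pm1$ case into the far regime are minor simplifications that do not alter the substance of the argument.
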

\begin{proof}
We start by proving \eqref{eq:claim1}. By the condition of the lemma it holds that
\begin{equation}
\label{eq:cond1}
\left\{
\begin{array}{l}
|\varrho(\tau_1)-\varrho(\tau_2)| \leq M|\tau_1-\tau_2|^\varsigma \smallskip \\ 
|\varrho(\tau)| \leq M|1-\tau^2|^\varsigma, 
\end{array}
\right.
\quad \tau_1,\tau_2,\tau\in\Gamma,
\end{equation}
for some finite constant $M$. Set, for brevity, $\kappa:=\varrho/\sr$. First, let $\tau_1=1$. Observe that
\[
|\kappa(\tau)| \leq M|1-\tau^2|^{\varsigma-1/2}  \leq M(1+C_\Gamma)\frac{|1-\tau|^\varsigma}{|1-\tau^2|^{1/2}}
\]
by \eqref{eq:cond1}. Thus, $\kappa(1)=0$ by continuity and it holds that
\begin{equation}
\label{eq:h-1}
|\kappa(\tau_1)-\kappa(\tau_2)| \leq  M(1+C_\Gamma)\frac{|\tau_1-\tau_2|^\varsigma}{|1-\tau_2^2|^{1/2}}.
\end{equation}
Clearly, an analogous bound holds when $\tau_1=-1$.

Second, let $|\tau_1-\tau_2|\geq|1-\tau_1^2|$. In this case, we also have that
\begin{equation}
\label{eq:geom-1}
|1-\tau_2^2| \leq |\tau_1^2-\tau_2^2| + |1-\tau_1^2| \leq C_\Gamma^*|\tau_1-\tau_2|, \quad C_\Gamma^*:=1+2C_\Gamma.
\end{equation}
Then it follows from \eqref{eq:cond1}, \eqref{eq:cond2}, and \eqref{eq:geom-1} that
\begin{eqnarray}
|\kappa(\tau_1)-\kappa(\tau_2)| &\leq& |\kappa(\tau_1)|+|\kappa(\tau_2)| \leq M\left(|1-\tau_1^2|^{\varsigma-1/2} + |1-\tau_2^2|^{\varsigma-1/2}\right) \nonumber \smallskip \\
\label{eq:h-2}
{} &\leq& 2M|1-\tau_2^2|^{\varsigma-1/2} \leq 2M(C_\Gamma^*)^\varsigma\frac{|\tau_1-\tau_2|^\varsigma}{|1-\tau_2^2|^{1/2}}.
\end{eqnarray}

Third, let $|\tau_1-\tau_2|\leq|1-\tau_1^2|$. Then, it also holds that
\begin{equation}
\label{eq:geom2}
|1-\tau_2^2| \leq |1-\tau_1^2| + |\tau_1^2-\tau_2^2| \leq C_\Gamma^*|1-\tau_1^2|.
\end{equation}
Thus, \eqref{eq:cond1} and Lemma \ref{lem:smooth21} imply that
\begin{eqnarray}
|\kappa(\tau_1)-\kappa(\tau_2)| &\leq& |\varrho(\tau_1)|\left|\frac{1}{\sr(\tau_1)}-\frac{1}{\sr(\tau_2)}\right| + \frac{|\varrho(\tau_1)-\varrho(\tau_2)|}{|\sr(\tau_2)|} \nonumber \\
{} &\leq& M|1-\tau_1^2|^\varsigma C_1\frac{|\tau_1-\tau_2|}{|1-\tau_1^2|^{3/2}} + M\frac{|\tau_1-\tau_2|^\varsigma}{|1-\tau_2^2|^{1/2}}. \nonumber
\end{eqnarray}
Using the conditions $|\tau_1-\tau_2|\leq|1-\tau_1^2|$ and \eqref{eq:geom2}, we obtain that
\begin{eqnarray}
|\kappa(\tau_1)-\kappa(\tau_2)| &\leq& M|\tau_1-\tau_2|^\varsigma\left(C_1\frac{|\tau_1-\tau_2|^{1-\varsigma}}{|1-\tau_1^2|^{1-\varsigma}}\frac{1}{|1-\tau_1^2|^{1/2}}+\frac{1}{|1-\tau_2^2|^{1/2}}\right) \nonumber \smallskip \\
\label{eq:h-3}
{} &\leq& M(C_1\sqrt{C_\Gamma^*}+1)\frac{|\tau_1-\tau_2|^\varsigma}{|1-\tau_2^2|^{1/2}}.
\end{eqnarray}
This finishes the proof of \eqref{eq:claim1}.

Finally, it can be readily verified that the equations leading to \eqref{eq:h-1}, \eqref{eq:h-2}, and \eqref{eq:h-3} also yield that $\kappa\in\cf^{0,\varsigma-1/2}(\Gamma)$ and hence, the lemma is proved.
\end{proof}

\begin{lem}
\label{lem:smooth23}
Let $\varrho$ be as in Lemma \ref{lem:smooth22}. Then $\os_\Gamma(\varrho/\sr)\in\cf^{0,\varsigma-1/2}(\Gamma)$ and
\begin{equation}
\label{eq:claim2}
\left|\os_\Gamma\left(\frac{\varrho}{\sr}\right)(\tau_1)-\os_\Gamma\left(\frac{\varrho}{\sr}\right)(\tau_2)\right| \leq C_3\min\left\{\frac{1}{|1-\tau_1^2|^{1/2}},\frac{1}{|1-\tau_2^2|^{1/2}}\right\} |\tau_1-\tau_2|^{\varsigma},
\end{equation}
where $C_3$ is a constant depending only on $\Gamma$.
\end{lem}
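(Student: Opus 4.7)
The plan is to adapt the classical Plemelj--Privalov argument for Hölder regularity of Cauchy-type singular integrals, while carefully tracking the endpoint weight $|1-\tau^2|^{-1/2}$ inherited from $\kappa:=\varrho/\sr$. By the symmetry of the estimate \eqref{eq:claim2}, it suffices to treat the case \eqref{eq:cond2}, so that $\min\{|1-\tau_1^2|^{-1/2},|1-\tau_2^2|^{-1/2}\}=|1-\tau_2^2|^{-1/2}$. The opening move is the standard regularization
\[
\os_\Gamma\kappa(\tau_1)-\os_\Gamma\kappa(\tau_2) = \frac{1}{\pi i}\int_\Gamma\bigl[\kappa(t)-\kappa(\tau_2)\bigr]\left(\frac{1}{t-\tau_1}-\frac{1}{t-\tau_2}\right)dt + \bigl[\kappa(\tau_2)-\kappa(\tau_1)\bigr]\cdot\frac{1}{\pi i}\int_\Gamma\frac{dt}{t-\tau_1},
\]
the second term of which is, up to a constant coming from $\os_\Gamma 1$, controlled directly by Lemma~\ref{lem:smooth22}, yielding a bound of the required form. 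It therefore remains to estimate the integral
\[
I:=|\tau_1-\tau_2|\int_\Gamma|\kappa(t)-\kappa(\tau_2)|\,\frac{|dt|}{|t-\tau_1|\,|t-\tau_2|}.
\]

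To bound $I$, I would split $\Gamma=\Gamma_1\cup\Gamma_2$ with $\Gamma_1=\{t\in\Gamma:|t-\tau_2|\le 2|\tau_1-\tau_2|\}$ and $\Gamma_2=\Gamma\setminus\Gamma_1$. On $\Gamma_1$ one has $|t-\tau_1|\asymp|t-\tau_2|$ off a negligible neighbourhood of $\tau_1$, so a textbook application of \eqref{eq:claim1} (taking the pair $t,\tau_2$) together with the elementary bound $\int_{\Gamma_1}|t-\tau_j|^{\varsigma-1}|dt|\le C|\tau_1-\tau_2|^\varsigma$ yields a contribution controlled by $|\tau_1-\tau_2|^\varsigma/|1-\tau_2^2|^{1/2}$ whenever $|\tau_1-\tau_2|\le|1-\tau_2^2|$, which is precisely the claim in that regime. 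On $\Gamma_2$ I would use the elementary estimate $|(t-\tau_1)^{-1}-(t-\tau_2)^{-1}|\le 2|\tau_1-\tau_2|/|t-\tau_2|^2$ together with \eqref{eq:claim1} applied to the pair $(t,\tau_2)$, reducing the analysis to integrals of the shape $\int_{\Gamma_2}|t-\tau_2|^{\varsigma-2}|dt|$, which converge (since $\varsigma<1$) to an expression dominated by $|\tau_1-\tau_2|^{\varsigma-1}$, after which the prefactor $|\tau_1-\tau_2|$ produces the required $|\tau_1-\tau_2|^\varsigma$ factor with the correct weight.

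The main obstacle is the regime $|\tau_1-\tau_2|\ge|1-\tau_2^2|$, i.e.\ when both $\tau_j$ lie close to an endpoint relative to their separation. There the weight $|1-\tau^2|^{-1/2}$ is no longer a bounded multiplier, and Lemma~\ref{lem:smooth22}'s bound for $|\kappa(t)-\kappa(\tau_2)|$ must be traded for the pointwise estimate $|\kappa(\tau)|\le C|1-\tau^2|^{\varsigma-1/2}$ (which is verified in the proof of Lemma~\ref{lem:smooth22}). In that case one writes $\kappa(t)-\kappa(\tau_2)=\kappa(t)-\kappa(\tau_2)$ and bounds the two pieces separately, splitting the contour based on the comparison of $|1-t^2|$ with $|1-\tau_2^2|$, in the same spirit as the case analysis executed in the proof of Lemma~\ref{lem:smooth22}. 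Using \eqref{eq:geom-1} one has $|1-\tau_2^2|\le C|\tau_1-\tau_2|$, so the target bound $|\tau_1-\tau_2|^\varsigma/|1-\tau_2^2|^{1/2}$ absorbs a factor $|1-\tau_2^2|^{\varsigma-1/2}$, which is the natural size suggested by the pointwise estimate for $\kappa$.

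Finally, the inclusion $\os_\Gamma(\varrho/\sr)\in\cf^{0,\varsigma-1/2}(\Gamma)$ is a formal consequence of \eqref{eq:claim2}: when $|\tau_1-\tau_2|\le|1-\tau_2^2|$, the bound $|\tau_1-\tau_2|^\varsigma/|1-\tau_2^2|^{1/2}$ is dominated by $|\tau_1-\tau_2|^{\varsigma-1/2}$ since $|\tau_1-\tau_2|^{1/2}\le|1-\tau_2^2|^{1/2}$, while in the opposite regime \eqref{eq:geom-1} again gives $|1-\tau_2^2|^{-1/2}\le C|\tau_1-\tau_2|^{-1/2}$, so that $|\tau_1-\tau_2|^\varsigma/|1-\tau_2^2|^{1/2}\le C|\tau_1-\tau_2|^{\varsigma-1/2}$ as well. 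The delicate bookkeeping between the two regimes, combined with the geometric comparison $|1-t^2|$ versus $|1-\tau_2^2|$ near the endpoints, is the heart of the proof.
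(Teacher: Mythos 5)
Your opening decomposition creates a gap that your bound on $I$ cannot close. After moving absolute values inside you arrive at
\[
I = |\tau_1-\tau_2|\int_\Gamma|\kappa(t)-\kappa(\tau_2)|\,\frac{|dt|}{|t-\tau_1|\,|t-\tau_2|},
\]
but this integral \emph{diverges}: near $t=\tau_1$ the factor $|\kappa(t)-\kappa(\tau_2)|$ tends to $|\kappa(\tau_1)-\kappa(\tau_2)|$, which is generically nonzero, while $|t-\tau_2|\approx|\tau_1-\tau_2|$ stays bounded away from zero, so the integrand behaves like a nonzero constant times $1/|t-\tau_1|$ and the contribution of any neighbourhood of $\tau_1$ is infinite. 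The remark that $|t-\tau_1|\asymp|t-\tau_2|$ ``off a negligible neighbourhood of $\tau_1$'' dismisses exactly the region where the divergence occurs, and the step $\int_{\Gamma_1}|t-\tau_j|^{\varsigma-1}|dt|\le C|\tau_1-\tau_2|^\varsigma$ does not apply there: near $\tau_1$ you are integrating $|t-\tau_1|^{-1}$, not $|t-\tau_1|^{\varsigma-1}$. The paper sidesteps this by writing the kernel as the difference
\[
\frac{\kappa(t)-\kappa(\tau_2)}{t-\tau_2}-\frac{\kappa(t)-\kappa(\tau_1)}{t-\tau_1},
\]
so that each fraction has a numerator vanishing to order $\varsigma$ at its own singularity; after splitting $\Gamma$ into $\Gamma^*$ and its complement, the two local pieces $I_1,I_2$ are absolutely integrable with the correct weight coming from \eqref{eq:claim1}, while the constant $\kappa(\tau_1)-\kappa(\tau_2)$ is isolated into $I_3$, whose singular kernel integrates on $\Gamma\setminus\Gamma^*$ to a bounded logarithm precisely because the endpoints of $\Gamma^*$ are equidistant from $\tau_1$. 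Passing to moduli before isolating that constant destroys the cancellation, and this is the missing step in your argument.

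Two secondary remarks. First, your displayed identity is off: for a closed smooth curve $\os_\Gamma 1\equiv1$, so the first integral alone already equals $\os_\Gamma\kappa(\tau_1)-\os_\Gamma\kappa(\tau_2)$ and the added term $[\kappa(\tau_2)-\kappa(\tau_1)]\cdot\frac{1}{\pi i}\int_\Gamma\frac{dt}{t-\tau_1}$ should not be there (harmless for the final bound, but it shows the decomposition has not been set up in the usual Plemelj--Privalov form). Second, the weighted estimate \eqref{eq:claim1} with its built-in $\min$ already absorbs the endpoint degeneracy uniformly in both regimes $|\tau_1-\tau_2|\lessgtr|1-\tau_2^2|$, so the separate case analysis you sketch in your third paragraph is unnecessary once the decomposition is repaired; the paper obtains the weight $|1-\tau_2^2|^{-1/2}$ in each of $I_1,\ldots,I_4$ in a single stroke from the $\min$ in \eqref{eq:claim1}.
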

\begin{proof}
Since $\kappa:=\varrho/\sr\in\cf^{0,\varsigma-1/2}(\Gamma)$, $\os_\Gamma\kappa\in\cf^{0,\varsigma-1/2}(\Gamma)$ as well by \cite[Sec. I.5.1]{Gakhov}. To prove \eqref{eq:claim2}, one needs to trace the local character of the proof in \cite[Sec. I.5.1]{Gakhov}. This is a tedious job but the authors felt compelled to carry it out for the reader.

Define
\[
\os(\tau) := \os_\Gamma\kappa(\tau) - \kappa(\tau) = \frac{1}{\pi i}\int_\Gamma\frac{\kappa(t)-\kappa(\tau)}{t-\tau}dt, \quad \tau\in\Gamma.
\]
In the light of \eqref{eq:claim1}, it is enough to show \eqref{eq:claim2} with $\os_\Gamma\kappa$ replaced by $\os$. Observe also that the integral that defines $\os$ is no longer singular as $\kappa(\pm1)=0$.

Denote by $\Gamma^*$ the connected component of $\Gamma\cap\{\tau:|\tau_1-\tau|\leq 2|\tau_1-\tau_2|\}$ that contains $\tau_1$. we order $\tau_1$ and $\tau_2$  so that \eqref{eq:cond2} holds. Then we can write
\begin{eqnarray}
\os(\tau_2)-\os(\tau_1) &=& \frac{1}{\pi i}\int_{\Gamma^*}\frac{\kappa(\tau)-\kappa(\tau_2)}{\tau-\tau_2}d\tau - \frac{1}{\pi i}\int_{\Gamma^*}\frac{\kappa(\tau)-\kappa(\tau_1)}{\tau-\tau_1}d\tau \nonumber \\
{} && + \frac{1}{\pi i}\int_{\Gamma\setminus\Gamma^*}\frac{\kappa(\tau_1)-\kappa(\tau_2)}{\tau-\tau_1}d\tau + \frac{1}{\pi i}\int_{\Gamma\setminus\Gamma^*}\frac{(\tau_2-\tau_1)(\kappa(\tau)-\kappa(\tau_2))}{(\tau-\tau_1)(\tau-\tau_2)}d\tau \nonumber \smallskip \\
{} &=& I_1 + I_2 + I_3 + I_4. \nonumber
\end{eqnarray}
Before we continue, observe that there exists a finite constant $M$ such that
\begin{equation}
\label{eq:cond4}
|\Gamma(t,\tau)|\leq M|t-\tau|, \quad t,\tau\in\Gamma,
\end{equation}
since $\Gamma$ is a smooth Jordan curve, where $|\Gamma(t,\tau)|$ is the arclength of the smallest subarc of $\Gamma$ connecting $t$ and $\tau$. 

First, let us estimate $I_1$. We get from \eqref{eq:claim1} and \eqref{eq:cond4} that
\begin{eqnarray}
|I_1| &\leq& \frac{C_2}{\pi}\sup_{\tau\in\Gamma^*}\min\left\{\frac{1}{|1-\tau^2|^{1/2}},\frac{1}{|1-\tau_2^2|^{1/2}}\right\}\int_{\Gamma^*}\frac{|d\tau|}{|\tau-\tau_2|^{1-\varsigma}}  \nonumber \\
\label{eq:h-4}
{} &\leq& \frac{C_2M^{1-\varsigma}}{\pi|1-\tau_2^2|^{1/2}} \int_0^{4M|\tau_1-\tau_2|}\frac{ds}{s^{1-\varsigma}} \leq \frac{C_24^\varsigma M}{\varsigma\pi}\frac{|\tau_1-\tau_2|^\varsigma}{|1-\tau_2^2|^{1/2}}.
\end{eqnarray}
Clearly, an analogous estimate can be made for $I_2$.

Now, we shall estimate $I_3$. It holds that
\[
|I_3| = \left|(\kappa(\tau_1)-\kappa(\tau_2))\log\left(\frac{\tau_a-\tau_1}{\tau_b-\tau_1}\right)\right|,
\]
where $\tau_a$ and $\tau_b$ are the endpoints of $\Gamma^*$. As $|\tau_a-\tau_1|=|\tau_b-\tau_1|$, we have that
\begin{equation}
\label{eq:h-5}
|I_3| \leq \const\frac{|\tau_1-\tau_2|^\varsigma}{|1-\tau_2^2|^{1/2}},
\end{equation}
where $\const$ is the product of $C_2$ and the maximum of the argument of $\frac{\tau_a-\tau_1}{\tau_b-\tau_1}$ for all possible choices of $\tau_a$ and $\tau_b$.

Finally, let us estimate $I_4$. Observe that $|\tau-\tau_1|\leq2|\tau-\tau_2|$, $\tau\in\Gamma\setminus\Gamma^*$, since
\[
|\tau-\tau_1| \leq |\tau-\tau_2| + |\tau_1-\tau_2| \leq |\tau-\tau_2| + \frac12|\tau-\tau_1|.
\]
Then we get from \eqref{eq:claim1} and the bound above that
\begin{eqnarray}
|I_4| &\leq& \frac{C_2}{\pi}\max_{\tau\in\Gamma\setminus\Gamma^*}\min\left\{\frac{1}{|1-\tau^2|^{1/2}},\frac{1}{|1-\tau_2^2|^{1/2}}\right\} \int_{\Gamma\setminus\Gamma^*}\frac{|\tau_2-\tau_1||d\tau|}{|\tau-\tau_1||\tau-\tau_2|^{1-\varsigma}}  \nonumber \\
{} &=& \frac{C_2}{\pi}\frac{|\tau_2-\tau_1|}{|1-\tau_2^2|^{1/2}}\int_{\Gamma\setminus\Gamma^*}\left|\frac{\tau-\tau_1}{\tau-\tau_2}\right|^{1-\varsigma}\frac{|d\tau|}{|\tau-\tau_1|^{2-\varsigma}} \nonumber \\
\label{eq:h-6}
{} &\leq& \frac{C_22^{1-\varsigma}M^{2-\varsigma}}{\pi}\frac{|\tau_2-\tau_1|}{|1-\tau_2^2|^{1/2}}\int_{2|\tau_1-\tau_2|}^\infty\frac{ds}{s^{2-\varsigma}} = \frac{C_2M^{2-\varsigma}}{(1-\varsigma)\pi}\frac{|\tau_2-\tau_1|^\varsigma}{|1-\tau_2^2|^{1/2}}.
\end{eqnarray}
Combining \eqref{eq:h-4}, \eqref{eq:h-5}, and \eqref{eq:h-6} with \eqref{eq:claim1}, we see that \eqref{eq:claim2} holds.
\end{proof}

\begin{lem}
\label{lem:smooth24}
Let $\varrho\in\cf^{0,\upsilon}$, $\upsilon\in\left(0,\frac12\right)$, $\varrho(\pm1)=0$, be such that
\[
\left|\varrho(\tau_1)-\varrho(\tau_2)\right| \leq C_4\min\left\{\frac{1}{|1-\tau_1^2|^{1/2}},\frac{1}{|1-\tau_2^2|^{1/2}}\right\} |\tau_1-\tau_2|^{\upsilon+1/2},
\]
$\tau_1,\tau_2\in\Gamma$, $\tau_1\neq\tau_2$, where $C_4$ is a constant depending only on $\Gamma$. Then $\sr\varrho\in\cf^{0,\upsilon+1/2}(\Gamma)$. Further, let $\varrho\in\cf^{N,\upsilon}(\Gamma)$, $N\in\N$, $\varrho^{(j)}(\pm1)=0$, $j\in\{0,\ldots,N\}$. Then
\begin{equation}
\label{eq:claim3}
\left\{
\begin{array}{ll}
(\varrho/\sr^{2N-1})\in\cf^{0,\upsilon+1/2}(\Gamma) & \mbox{if } \upsilon\in\left(0,\frac12\right), \smallskip \\
(\varrho/\sr^{2N+1})\in\cf^{0,\upsilon-1/2}(\Gamma) & \mbox{if } \upsilon\in\left(\frac12,1\right).
\end{array}
\right.
\end{equation}
\end{lem}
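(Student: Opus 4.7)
The proof splits naturally into the two claims. The plan is: for Part 1 I would follow the case-analysis template of Lemma \ref{lem:smooth22}; for Part 2 I would introduce the auxiliary function $\tilde u := \varrho/(\tau^2-1)^N = \varrho/\sr^{2N}$ and either invoke Lemma \ref{lem:smooth22} (when $\upsilon>1/2$) or invoke Part 1 (when $\upsilon<1/2$).

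For Part 1, assume without loss of generality \eqref{eq:cond2} and write
\[
(\sr\varrho)(\tau_1) - (\sr\varrho)(\tau_2) = \sr(\tau_1)\bigl[\varrho(\tau_1)-\varrho(\tau_2)\bigr] + \bigl[\sr(\tau_1) - \sr(\tau_2)\bigr]\varrho(\tau_2).
\]
Combining $|\sr(\tau_1)| = |1-\tau_1^2|^{1/2}$ with the weighted hypothesis bounds the first term directly by $\const|\tau_1-\tau_2|^{\upsilon+1/2}$. For the second term, Lemma \ref{lem:smooth21} with $N=-1$ gives $|\sr(\tau_1)-\sr(\tau_2)| \leq C_1|\tau_1-\tau_2|/|1-\tau_1^2|^{1/2}$; together with the estimate $|\varrho(\tau_2)|\leq \const|1-\tau_2^2|^\upsilon$ (which follows from $\varrho\in\cf^{0,\upsilon}(\Gamma)$, $\varrho(\pm1)=0$, and the elementary inequality $\min\{|\tau-1|,|\tau+1|\}\leq |1-\tau^2|$ implied by $\max\{|\tau-1|,|\tau+1|\}\geq 1$), I then split into the subcases $|\tau_1-\tau_2|\geq|1-\tau_1^2|$ and $|\tau_1-\tau_2|\leq|1-\tau_1^2|$ and invoke \eqref{eq:geom-1} and \eqref{eq:geom2} respectively. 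The hypothesis $\upsilon<1/2$ is exactly what makes the leftover exponents nonnegative, yielding the target bound $\const|\tau_1-\tau_2|^{\upsilon+1/2}$.

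For Part 2, the iterated Taylor-integral expansion at $\pm 1$ gives
\[
\varrho(\tau) = \frac{(\tau\mp 1)^N}{(N-1)!}\int_0^1 (1-s)^{N-1}\varrho^{(N)}\bigl(\pm 1+s(\tau\mp 1)\bigr)\,ds =: (\tau\mp 1)^N g_\pm(\tau),
\]
where $g_\pm\in\cf^{0,\upsilon}$ near $\pm 1$ and $g_\pm(\pm 1)=0$, the latter because $\varrho^{(N)}(\pm 1)=0$. Cancelling $(\tau\mp 1)^N$ against the matching factor in the denominator of $\tilde u$, and using smoothness of $\tilde u$ away from the endpoints, shows $\tilde u\in\cf^{0,\upsilon}(\Gamma)$ with $\tilde u(\pm 1)=0$. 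For $\upsilon\in(1/2,1)$, Lemma \ref{lem:smooth22} applied to $\tilde u$ yields $\tilde u/\sr = \varrho/\sr^{2N+1}\in\cf^{0,\upsilon-1/2}(\Gamma)$, settling this subcase.

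For $\upsilon\in(0,1/2)$ one must further verify the weighted hypothesis of Part 1 for $\tilde u$, after which Part 1 delivers $\sr\tilde u = \varrho/\sr^{2N-1}\in\cf^{0,\upsilon+1/2}(\Gamma)$. I would split according to whether $|\tau_1-\tau_2|$ exceeds $\min\{|\tau_1\mp 1|,|\tau_2\mp 1|\}/2$: in the positive case the plain H\"older bound $|\tilde u(\tau_1)-\tilde u(\tau_2)|\leq \const|\tau_1-\tau_2|^\upsilon$, combined with the triangle-inequality control $|1-\tau_2^2|^{1/2}\leq \const|\tau_1-\tau_2|^{1/2}$ enforced by the case assumption, suffices; in the negative case $|\tau\mp 1|$ stays comparable to $|\tau_1\mp 1|$ on the segment joining $\tau_1,\tau_2$, and integration of the pointwise derivative estimate $|\tilde u'(\tau)|\leq \const|\tau\mp 1|^{\upsilon-1}$, itself derived from the iterated bounds $|\varrho^{(k)}(\tau)|\leq \const|\tau\mp 1|^{N-k+\upsilon}$, does the job. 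The hardest part will be this last bookkeeping: the powers of $|1-\tau^2|$ coming from Lemma \ref{lem:smooth21}, from the Taylor estimates on $\varrho$, and from the geometric inequalities \eqref{eq:geom-1} and \eqref{eq:geom2} must align to produce precisely the exponent $\upsilon+1/2$ on $|\tau_1-\tau_2|$.
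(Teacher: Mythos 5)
Your proposal is correct in overall structure but differs from the paper in one substantive way, and contains one small technical misstep worth flagging.

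For Part 1, you are essentially reproducing the paper's argument, which also splits on $|\tau_1-\tau_2|\gtrless|1-\tau_1^2|$. However, the paper only uses the two-term decomposition in the regime $|\tau_1-\tau_2|\leq|1-\tau_1^2|$; in the regime $|\tau_1-\tau_2|\geq|1-\tau_1^2|$ it bounds $|(\sr\varrho)(\tau_1)|$ and $|(\sr\varrho)(\tau_2)|$ separately by $M|1-\tau_j^2|^{\upsilon+1/2}$ and applies \eqref{eq:geom-1}. Your plan to apply Lemma~\ref{lem:smooth21} for the factor $\sr(\tau_1)-\sr(\tau_2)$ in \emph{both} subcases would stall in the regime $|\tau_1-\tau_2|\geq|1-\tau_1^2|$: there the bound $|\sr(\tau_1)-\sr(\tau_2)|\leq C_1|\tau_1-\tau_2|/|1-\tau_1^2|^{1/2}$, combined with $|\varrho(\tau_2)|\lesssim|\tau_1-\tau_2|^\upsilon$, produces $|\tau_1-\tau_2|^{1+\upsilon}/|1-\tau_1^2|^{1/2}$, which is at least of order $|\tau_1-\tau_2|^{\upsilon+1/2}$ and hence not a useful upper bound. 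In that regime you should instead use the trivial estimate $|\sr(\tau_1)-\sr(\tau_2)|\leq|\sr(\tau_1)|+|\sr(\tau_2)|\leq2|1-\tau_2^2|^{1/2}\lesssim|\tau_1-\tau_2|^{1/2}$ (via \eqref{eq:cond2} and \eqref{eq:geom-1}), which closes the case; once corrected, your Part~1 is the same as the paper's.

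For Part 2 your approach diverges genuinely from the paper's, and it does work. The paper argues directly: it records the Taylor-remainder estimate and the pointwise bounds $|\varrho^{(j)}(\tau)|\leq M|1-\tau^2|^{N-j+\upsilon}$ (their display \eqref{eq:cond5}), then in the delicate regime $|\tau_1-\tau_2|\leq|1-\tau_1^2|$ inserts the Taylor polynomial of $\varrho$ at $\tau_1$ into the difference $\varrho(\tau_1)/\sr^{2N-1}(\tau_1)-\varrho(\tau_2)/\sr^{2N-1}(\tau_2)$ and controls the resulting telescoping sum term by term using Lemma~\ref{lem:smooth21}. The same template handles $\upsilon\in(1/2,1)$. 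You instead factor out the vanishing at $\pm1$ by introducing $\tilde u=\varrho/\sr^{2N}$, check that $\tilde u\in\cf^{0,\upsilon}(\Gamma)$ with $\tilde u(\pm1)=0$ (plus the weighted condition when $\upsilon<1/2$), and then reduce to Lemma~\ref{lem:smooth22} or to Part~1. This is a cleaner, more modular reduction and it correctly yields the exponents $\upsilon\mp1/2$. The price is that you must carry out the bookkeeping to verify the hypotheses on $\tilde u$: the Taylor-integral factorization $\varrho=(\tau\mp1)^N g_\pm$ should be phrased in the arclength parameter (the point $\pm1+s(\tau\mp1)$ need not lie on $\Gamma$), and the weighted Hölder property for $\tilde u$ in the case $\upsilon<1/2$ requires the two-case split and the integration of $|\tilde u'|\lesssim|1-\tau^2|^{\upsilon-1}$ that you sketch. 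Those details are of comparable difficulty to the paper's direct computation, so the two approaches are roughly equivalent in effort, but yours makes the logical dependence on Lemma~\ref{lem:smooth22} and Part~1 explicit, which is a legitimate simplification in exposition.
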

\begin{proof}
To verify the first claim, assume first that $|\tau_1-\tau_2|\geq|1-\tau_1^2|$. Since $\varrho\in\cf^{0,\upsilon}(\Gamma)$ and vanishes at $\pm1$, it holds for some finite constant $M$ that
\begin{equation}
\label{eq:cond3}
|\varrho(\tau)| \leq M|1-\tau^2|^\upsilon.
\end{equation}
Then we get from \eqref{eq:geom-1} and the inequality above that
\begin{equation}
\label{eq:h-7}
|(\sr\varrho)(\tau_1)-(\sr\varrho)(\tau_2)| \leq M\left(|1-\tau_1^2|^\varsigma+|1-\tau_2^2|^\varsigma\right) \leq M(1+C_\Gamma^*)|\tau_1-\tau_2|^\varsigma.
\end{equation}
Assume now that $|\tau_1-\tau_2|\leq|1-\tau_1^2|$. Then we get by \eqref{eq:cond3}, \eqref{eq:geom3}, and the conditions of the lemma that
\begin{eqnarray}
|(\sr\varrho)(\tau_1)-(\sr\varrho)(\tau_2)| &\leq & |\sr(\tau_2)|\left|\varrho(\tau_1)-\varrho(\tau_2)\right| + |\varrho(\tau_1)||\sr(\tau_1)-\sr(\tau_2)| \nonumber \\
{} &\leq& |1-\tau_2^2|^{1/2}C_4\frac{|\tau_1-\tau_2|^{\upsilon+1/2}}{|1-\tau_2^2|^{1/2}} + M|1-\tau_1^2|^\upsilon C_1\frac{|\tau_1-\tau_2|}{|1-\tau_1^2|^{1/2}} \nonumber \\
\label{eq:h-8}
{} &\leq& (C_4+C_1M)|\tau_1-\tau_2|^{\upsilon+1/2}.
\end{eqnarray}
Equations \eqref{eq:h-7} and \eqref{eq:h-8} show that $(\sr\varrho)\in\cf^{0,\upsilon+1/2}(\Gamma)$.

It remains to prove \eqref{eq:claim3}. Suppose first that $\upsilon\in\left(0,\frac12\right)$. Then, by the assumptions on $\varrho$, it holds for some finite constant $M$ that
\begin{equation}
\label{eq:cond5}
\left\{
\begin{array}{l}
\left|\varrho(\tau_2)-\sum_{j=1}^N\varrho^{(j)}(\tau_1)(\tau_2-\tau_1)^j\right| \leq M|\tau_2-\tau_1|^{N+\upsilon}, \smallskip \\
\left|\varrho^{(j)}(\tau)\right| \leq M|1-\tau^2|^{N-j+\upsilon},
\end{array}
\right.
\quad \tau_1,\tau_2,\tau\in\Gamma.
\end{equation}
Thus, for $|\tau_1-\tau_2|\geq|1-\tau_1^2|$, we have from \eqref{eq:cond5} and \eqref{eq:geom-1} that
\[
\left|\frac{\varrho(\tau_1)}{\sr^{2N-1}(\tau_1)}-\frac{\varrho(\tau_2)}{\sr^{2N-1}(\tau_2)}\right| \leq \left|\frac{\varrho(\tau_1)}{\sr^{2N-1}(\tau_1)}\right|+\left|\frac{\varrho(\tau_2)}{\sr^{2N-1}(\tau_2)}\right| \leq M(1+(C_\Gamma^*)^\varsigma)|\tau_1-\tau_2|^{\upsilon+1/2}.
\]
For $|\tau_1-\tau_2|\leq|1-\tau_1^2|$, we have from \eqref{eq:cond5} that
\begin{eqnarray}
\left|\frac{\varrho(\tau_1)}{\sr^{2N-1}(\tau_1)}-\frac{\varrho(\tau_2)}{\sr^{2N-1}(\tau_2)}\right| &=& \left|\frac{\varrho(\tau_1)}{\sr^{2N-1}(\tau_1)}  \pm \sum_{j=0}^N\frac{\varrho^{(j)}(\tau_1)(\tau_2-\tau_1)^j}{\sr^{2N-1}(\tau_2)} - \frac{\varrho(\tau_2)}{\sr^{2N-1}(\tau_2)}\right|\nonumber \\
{} &\leq& \left|\frac{\varrho(\tau_1)}{\sr^{2N-1}(\tau_1)}  - \sum_{j=0}^N\frac{\varrho^{(j)}(\tau_1)(\tau_2-\tau_1)^j}{\sr^{2N-1}(\tau_2)}\right| +\frac{M|\tau_2-\tau_1|^{N+\upsilon}}{|1-\tau_2^2|^{N-1/2}} \nonumber \\
{} &\leq& I + M|\tau_2-\tau_1|^{\upsilon+1/2}. \nonumber
\end{eqnarray}
Furthermore, it holds by \eqref{eq:geom3} and \eqref{eq:cond5} that
\begin{eqnarray}
I &\leq& \sum_{j=0}^N\left|\varrho^{(j)}(\tau_1)(\tau_2-\tau_1)^j\left(\frac{1}{\sr^{2N-1}(\tau_1)}-\frac{1}{\sr^{2N-1}(\tau_2)}\right)\right| + \sum_{j=1}^N\left|\frac{\varrho^{(j)}(\tau_1)(\tau_2-\tau_1)^j}{\sr^{2N-1}(\tau_1)}\right| \nonumber \\
{} &\leq& (2N-1)C_1M \sum_{j=0}^N\frac{|1-\tau_1^2|^{N-j+\upsilon}|\tau_2-\tau_1|^{j+1}}{|1-\tau_1^2|^{N+1/2}} + M\sum_{j=1}^k\frac{|1-\tau_1^2|^{N-j+\upsilon}|\tau_2-\tau_1|^j}{|1-\tau_1^2|^{N-1/2}}  \nonumber \\
{} &\leq& 2NC_1M\sum_{j=1}^{N+1}\left|\frac{\tau_2-\tau_1}{1-\tau_1^2}\right|^{j-\upsilon-1/2} |\tau_2-\tau_1|^{\upsilon+1/2} \leq 2N^2C_1M|\tau_2-\tau_1|^{\upsilon+1/2}. \nonumber
\end{eqnarray}
Clearly, the case $\upsilon\in\left(\frac12,1\right)$ can be handled in a similar fashion.
\end{proof}

\begin{proof}[Proof of Proposition \ref{lem:smooth2}]
Clearly, we need to prove the proposition only for $\varsigma\neq\frac12,1$ as these two cases follow from the obvious inclusion $\cf^{m,\varsigma-\epsilon}\subset\cf^{m,\varsigma}$.

Let $\ell_0$, $\deg(\ell_0)\leq2m+1$, be the polynomial interpolating $\theta$ at $\pm1$ up to and including the order $m$. Throughout the proof we assume that $\Gamma$ is as in Proposition \ref{lem:smooth1} and that $\theta$ is extended to $\Gamma\setminus\Delta$ by $\ell_0$. Clearly, this implies that $\theta\in\cf^{m,\varsigma}(\Gamma)$. As $\ell_1:=\os(\ell_0/\sr^+)$ is a polynomial of degree $2m$, we have that
\[
\os\left(\frac{\theta}{\sr}\right) = \os\left(\frac{\theta-\ell_0}{\sr}\right) + \ell_1 = \os_\Gamma\left(\frac{\theta-\ell_0}{\sr}\right)_{|\Delta} + \ell_1.
\]

Assume first that $m=0$. Then $\theta-\ell_0$ satisfies the conditions of $\varrho$ in Lemma \ref{lem:smooth22} and therefore Lemma \ref{lem:smooth23} holds with $\theta-\ell_0$ in place of $\varrho$. Let $\ell_2$ be the polynomial interpolating $\os_\Gamma((\theta-\ell_0)/\sr)$ at $\pm1$ and set 
\begin{equation}
\label{eq:setd}
d:= \sr\left(\os_\Gamma\left(\frac{\theta-\ell_0}{\sr}\right)-\ell_2\right) \quad \mbox{and} \quad \ell:=\ell_1+\ell_2.
\end{equation}
Clearly, $d(\pm1)=0$. Then the conclusion of the proposition follows from Lemma \ref{lem:smooth24} applied with $\varrho=\os_\Gamma((\theta-\ell_0)/\sr)-\ell_2$.

Assume now that $m\in\N$. Since the derivative of a singular integral is the singular integral of the derivative \cite[Sec. I.4.4]{Gakhov}, observe that
\[
\os_\Gamma^{(m)}\left(\frac{\theta-\ell_0}{\sr}\right) = \os_\Gamma\left(\left(\frac{\theta-\ell_0}{\sr}\right)^{(m)}\right) =\sum_{j=0}^m\binom{m}{j}\os_\Gamma\left(\frac{v_j(\theta-\ell_0)^{(m-j)}}{\sr^{2j+1}}\right),
\]
where $v_j$ are polynomials. Then $\os_\Gamma((\theta-\ell_0)/\sr)\in\cf^{m,\varsigma-1/2}(\Gamma)$ by \eqref{eq:claim3}, applied with $\varrho=v_j(\theta-\ell_0)^{(m-j)}$, $N=j$, and $\upsilon=\varsigma$, and the fact that singular integrals preserve H\"older smoothness \cite[Sec. I.5.1]{Gakhov}. Thus, $\os_\Gamma((\theta-\ell_0)/\sr)$ has $m$ continuous derivatives on $\Gamma$. Let then $d$ and $\ell$ be defined by \eqref{eq:setd}, where $\ell_2$ is the polynomial interpolating $\os_\Gamma((\theta-\ell_0)/\sr)$ at $\pm1$ up to and including the order $m$. Once more, since singular integral commutes with differentiation, we get
\begin{eqnarray}
d^{(m)} &=& \sum_{k=0}^m\binom{m}{k}\frac{u_k}{\sr^{2k-1}}\left(\os_\Gamma^{(m-k)}\left(\frac{\theta-\ell_0}{\sr}\right)-\ell_2^{(m-k)}\right) \nonumber \\
{} &=& \sum_{k=0}^m \sum_{j=0}^{m-k} \binom{m}{k} \binom{m-k}{j}\frac{u_k}{\sr^{2k-1}}\left(\os_\Gamma\left(\frac{v_j(\theta-\ell_0)^{(m-k-j)}}{\sr^{2j+1}}\right)-\ell_{j,k}\right), \nonumber
\end{eqnarray}
where $u_k$ are polynomials and the polynomials $\ell_{j,k}$ interpolate the corresponding term in the parenthesis. Then
\[
\os_\Gamma^{(k)}\left(\frac{v_j(\theta-\ell_0)^{(m-k-j)}}{\sr^{2j+1}}\right) = \sum_{l=0}^k\binom{k}{l}\os_\Gamma\left(\frac{v_{j+l}(\theta-\ell_0)^{(m-j-l)}}{\sr^{2(j+l)+1}}\right) \in \cf^{0,\varsigma-1/2}(\Gamma),
\]
by \eqref{eq:claim3}, applied with $\varrho=v_{j+l}(\theta-\ell_0)^{(m-j-l)}$, $N=j+l$, $\upsilon=\varsigma$, and the fact that singular integrals preserve H\"older smoothness. Thus, \eqref{eq:claim3} applied once more, now with $\varrho=\os_\Gamma(v_j(\theta-\ell_0)^{(m-k-j)}/\sr^{2j+1})-\ell_{j,k}$, $N=k$, and, $\upsilon=\varsigma-1/2$, yields that $d^{(m)}\in\cf^{0,\varsigma}(\Gamma)$, which finishes the proof of the lemma.
\end{proof}

\section{Riemann-Hilbert-$\bar\partial$ Problem}
\label{sec:rhpbp}

In what follows, we adopt the notation $\phi^{m\sigma_3}$ for the diagonal matrix $\left(\begin{array}{cc} \phi^m&0\\0&\phi^{-m} \end{array}\right)$, where $\phi$ is a function, $m$ is a constant, and $\sigma_3$ is the Pauli matrix $\displaystyle \sigma_3 = \left(\begin{array}{cc} 1&0\\0&-1 \end{array}\right)$.

\subsection{Initial Riemann-Hilbert Problem} 

Let $\mathscr{Y}$ be a $2\times2$ matrix function and $w_n$ be given by \eqref{eq:varyingweight}. Consider the following Riemann-Hilbert problem for $\mathscr{Y}$ (\rhy):
\begin{itemize}
\item[(a)] $\mathscr{Y}$ is analytic in $\C\setminus\Delta$ and $\displaystyle \lim_{z\to\infty} \mathscr{Y}(z)z^{-n\sigma_3} = \mathscr{I}$, where $\mathscr{I}$ is the identity matrix;
\item[(b)] $\mathscr{Y}$ has continuous traces from each side of $\Delta^\circ$, $\mathscr{Y}_\pm$, and $\displaystyle \mathscr{Y}_+ = \mathscr{Y}_- \left(\begin{array}{cc}1&2w_n\\0&1\end{array}\right);$
\item[(c)] $\mathscr{Y}$ has the following behavior near $z=1$:
\[
\mathscr{Y} = \left\{
\begin{array}{ll}
\displaystyle O\left(\begin{array}{cc}1&|1-z|^\alpha\\1&|1-z|^\alpha\end{array}\right), & \mbox{if} \quad \alpha<0, \smallskip \\
\displaystyle O\left(\begin{array}{cc}1&\log|1-z|\\1&\log|1-z|\end{array}\right), & \mbox{if} \quad \alpha=0, \smallskip \\
\displaystyle O\left(\begin{array}{cc}1&1\\1&1\end{array}\right), & \mbox{if} \quad \alpha>0, \\
\end{array}
\right. \quad \mbox{as} \quad D\ni z\to1;
\]
\item[(d)] $\mathscr{Y}$ has the same behavior when $D\ni z\to-1$ as in (c) only with $\alpha$ replaced by $\beta$ and $1-z$ replaced by $1+z$.
\end{itemize}

The connection between \rhy~ and polynomials orthogonal with respect to $w_n$ was first realized by Fokas, Its, and Kitaev \cite{FIK91,FIK92} and lies in the following.
\begin{lem}
\label{lem:rhy}
Let $q_n$ be a polynomial satisfying orthogonality relations \eqref{eq:ortho} and $R_n$ be the corresponding function of the second kind given by \eqref{eq:skind}. Further, let $q_{n-1}^*$ be a polynomial satisfying
\[
\int_\Delta t^jq_{n-1}^*(t)w_n(t)dt=0, \quad j\in\{0,\ldots,n-2\},
\]
and $R_{n-1}^*=R_{n-1}(q_{n-1}^*;\cdot)$ be the function of the second kind for $q_{n-1}^*$. If a solution of \rhy~ exists then it is unique. Moreover, in this case $\deg(q_n)=n$, $R_{n-1}^*(z)=O(z^{-n})$ as $z\to\infty$, and the solution of \rhy~ is given by
\begin{equation}
\label{eq:y}
\mathscr{Y} = \left(\begin{array}{cc}
q_n & R_n \\
m_nq_{n-1}^* & m_nR_{n-1}^*
\end{array}\right),
\end{equation}
where $m_n$ is a constant such that $m_nR_{n-1}^*(z)=z^{-n}[1+o(1)]$ near infinity. Conversely, if $\deg(q_n)=n$ and $R_{n-1}^*(z)=O(z^{-n})$ as $z\to\infty$, then $\mathscr{Y}$ defined in \eqref{eq:y} solves \rhy~.
\end{lem}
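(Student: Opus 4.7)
The plan is to verify the converse direction directly and then obtain uniqueness together with the forward identification via a standard determinant argument.

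\emph{Converse direction.} Assume $\deg(q_n)=n$ and $R_{n-1}^*(z)=O(z^{-n})$, and check conditions (a)--(d) for the matrix \eqref{eq:y}. Analyticity in $\C\setminus\Delta$ is immediate since $q_n,q_{n-1}^*$ are entire and $R_n,R_{n-1}^*$ are Cauchy transforms on $\Delta$. For (a): monicity gives $q_n(z)z^{-n}\to1$; expanding $1/(t-z)$ as a geometric series in \eqref{eq:skind} and using the orthogonality \eqref{eq:ortho} yields $R_n(z)=O(z^{-n-1})$; $\deg(q_{n-1}^*)\le n-1$ gives $m_nq_{n-1}^*z^{-n}\to0$; and the normalization of $m_n$ gives $m_nR_{n-1}^*z^n\to1$. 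Condition (b) follows at once from Sokhotski--Plemelj \eqref{eq:sokhotski} applied to $R_n=2\oc(q_nw_n)$ and $R_{n-1}^*=2\oc(q_{n-1}^*w_n)$, while $q_n,q_{n-1}^*$ extend continuously across $\Delta^\circ$. The endpoint bounds (c)--(d) are precisely \eqref{eq:rnnear1}.

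\emph{Uniqueness via determinant.} Suppose $\mathscr{Y}$ solves \rhy. Since the jump matrix has determinant one, $\det\mathscr{Y}$ has no jump across $\Delta^\circ$ and extends analytically through it. Near $z=1$, (c) gives $\det\mathscr{Y}=O(|1-z|^\alpha)$ (or logarithmic, or bounded), and since $\alpha>-1$ one has $(z-1)\det\mathscr{Y}\to0$; by Riemann's removable singularity theorem the singularity at $1$ is removable, and similarly at $-1$. Hence $\det\mathscr{Y}$ is entire, tends to $1$ at infinity by (a), and Liouville forces $\det\mathscr{Y}\equiv1$. If $\mathscr{Y}_1,\mathscr{Y}_2$ both solve \rhy, then $\mathscr{Y}_1\mathscr{Y}_2^{-1}$ has no jump, removable singularities at $\pm1$, and tends to $\mathscr{I}$ at $\infty$, hence equals $\mathscr{I}$.

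\emph{Identification with \eqref{eq:y}.} Because the jump matrix is upper triangular, the first column of any solution has no jump across $\Delta^\circ$; together with the removability at $\pm 1$ (which again uses $\alpha,\beta>-1$) this forces $\mathscr{Y}_{11},\mathscr{Y}_{21}$ to be entire, and their polynomial growth at infinity (of orders $n$ and at most $n-1$, respectively) forces $\mathscr{Y}_{11}$ to be a monic polynomial of degree exactly $n$ and $\mathscr{Y}_{21}$ a polynomial of degree at most $n-1$. For the second column, (b) says $\mathscr{Y}_{j2}-2\oc(\mathscr{Y}_{j1}w_n)$ is analytic across $\Delta^\circ$; by (c)--(d) its singularities at $\pm 1$ are removable, and by (a) it vanishes at infinity, so it is identically zero. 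Thus $\mathscr{Y}_{j2}$ is the function of the second kind associated with $\mathscr{Y}_{j1}$. Expanding these second-kind functions in powers of $1/z$, the decay orders prescribed by (a) translate into the orthogonality relations \eqref{eq:ortho} on $\mathscr{Y}_{11}$ (identifying it with $q_n$ and in particular yielding $\deg q_n=n$) and into the orthogonality defining $q_{n-1}^*$ (identifying $\mathscr{Y}_{21}$ with a constant multiple $m_nq_{n-1}^*$); the $(2,2)$-normalization then fixes $m_n$ and forces $R_{n-1}^*(z)=O(z^{-n})$. The only real---but routine---obstacle is the collection of removable-singularity invocations at $\pm1$, which is precisely where the constraint $\alpha,\beta>-1$ built into the setup is used.
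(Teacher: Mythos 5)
Your proof is correct, and both directions are handled in essentially the same way as the paper: the forward direction exploits that the jump is upper triangular so the first column is entire of polynomial growth, then recovers the second column as $2\oc(\mathscr{Y}_{j1}w_n)$ via Sokhotski--Plemelj together with removability at $\pm1$ and decay at infinity, and the converse is a direct check. The one substantive difference is that the paper does not prove uniqueness itself but simply cites \cite[Lem.~2.3]{KMcLVAV04}, noting that the argument there transfers verbatim to a general analytic arc, whereas you write out the standard determinant-plus-Liouville proof (showing $\det\mathscr{Y}\equiv1$ and then that $\mathscr{Y}_1\mathscr{Y}_2^{-1}$ extends to an entire matrix equal to $\mathscr{I}$ at $\infty$). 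That argument is indeed the one the citation refers to, and you carry it through carefully, including the point that the $O(|1\mp z|^{\alpha})$-type singularities of the product are removable precisely because $\alpha,\beta>-1$; so you have not changed the method, only replaced a reference by the full argument, and along the way made explicit the removability steps that the paper leaves implicit.
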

\begin{proof} 
As only the smoothness properties of the function $w_n$ were used in \cite[Lem. 2.3]{KMcLVAV04}, this lemma translates without change to the case of a general closed analytic arc and yields the uniqueness of the solution of \rhy~ whenever the latter exists. 

Suppose now that the solution, say $\mathscr{Y}=[\mathscr{Y}_{jk}]_{j,k=1}^2$, of \rhy~ exists. Then $\mathscr{Y}_{11}=z^n+${ lower order terms} by the normalization in \rhy(a). Moreover, by \rhy(b), $\mathscr{Y}_{11}$ has no jump on $\Delta$ and hence is holomorphic in the whole complex plane. Thus, $\mathscr{Y}_{11}$ is necessarily a polynomial of degree $n$ by Liouville's theorem. Further, since $\mathscr{Y}_{12}=O(z^{-n-1})$ and satisfies \rhy(b), it holds that $\mathscr{Y}_{12}=2\oc(\mathscr{Y}_{11}w_n)$. From the latter, we easily deduce that $\mathscr{Y}_{11}$ satisfies orthogonality relations \eqref{eq:ortho}. Applying the same arguments to the second row of $\mathscr{Y}$, we obtain that $\mathscr{Y}_{21}=q_{n-1}^*$ and $\mathscr{Y}_{22}=m_nR_{n-1}^*$ with $m_n$ well-defined.

Conversely, let $\deg(q_n)=n$ and $R_{n-1}^*(z)=O(z^{-n})$ as $z\to\infty$. Then it can be easily checked by direct examination  of \rhy(a)-(d), using the material in Section~\ref{subsec:secondkind}, that $\mathscr{Y}$, given by \eqref{eq:y}, solves \rhy.
\end{proof}

\subsection{Renormalized Riemann-Hilbert Problem} 

Throughout, unless specified otherwise, we follow the convention $\sqrt z=\sqrt{|z|}\exp\{i\Arg(z)/2\}$, $\Arg(z)\in(-\pi,\pi]$. Set
\begin{equation}
\label{eq:en}
\epsilon_n := \sqrt{\gm_{(v_n/hh_n)}/2} \quad \mbox{and} \quad E_n := \epsilon_n\map^n\szf_{(v_n/hh_n)}.
\end{equation}
Then $E_n$ has continuous boundary values on each side of $\Delta$ that satisfy
\[
E_n^+E_n^- = \frac{v_n}{2hh_n} = \frac{w}{2w_n}
\]
due to \eqref{eq:mappr}, \eqref{eq:szego}, and \eqref{eq:varyingweight}. Further, put
\begin{equation}
\label{eq:cpmcnpm}
c^+:=\szf_{h}^+/\szf_{h}^-, \quad c_n^+ := \szf_{h_n}^+/\szf_{h_n}^-, \quad c^-:=1/c^+, \quad \mbox{and} \quad c_n^- := 1/c_n^+.
\end{equation}
Then we get on account of (\ref{eq:szego}), (\ref{eq:szfpoly}), and the multiplicativity property of the Szeg\H{o} functions that
\begin{equation}
\label{eq:encn}
\frac{E_n^-}{E_n^+} = \frac{\left(\szf_{v_n}\map^n\right)^-}{\left(\szf_{v_n}\map^n\right)^+} \frac{\szf_{hh_n}^+}{\szf_{hh_n}^-} = \frac{v_nc_n^+c^+}{\gm_{v_n}\left(\szf_{v_n}^2\map^{2n}\right)^+} = (r_nc_nc)^+ \quad \mbox{and} \quad \frac{E_n^+}{E_n^-}=(r_nc_nc)^-,
\end{equation}
where we slightly abuse the notation by writing $(r_nc_nc)^\pm$ instead of $r_n^\pm c_n^\pm c^\pm$. Since any Szeg\H{o} function assumes value one at infinity and $\map(z)/2z\to1$ as $z\to\infty$, it holds that $E_n(z)/[\epsilon_n(2z)^n]\to1$ as $z\to\infty$. Then it is a quick computation to check that
\[
\left(E_n^-\right)^{\sigma_3} \left(\begin{array}{cc}1 & 2w_n \\ 0 & 1 \end{array}\right) \left(E_n^+\right)^{-\sigma_3} = \left(\begin{array}{cc} (r_nc_nc)^+ & w \\ 0 & (r_nc_nc)^- \end{array}\right)
\]
and
\[
\lim_{z\to\infty} (2^n\epsilon_n)^{\sigma_3}\mathscr{Y}E_n^{-\sigma_3}(z) = \mathscr{I}.
\]

Suppose now that \rhy~ is solvable and $\mathscr{Y}$ is the solution. Define
\begin{equation}
\label{eq:t}
\mathscr{T} := (2^n\epsilon_n)^{\sigma_3}\mathscr{Y}E_n^{-\sigma_3}.
\end{equation}
Then $\mathscr{T}$ solves the following Riemann-Hilbert problem (\rht):
\begin{itemize}
\item[(a)] $\mathscr{T}$ is analytic in $D$ and $\mathscr{T}(\infty)=\mathscr{I}$;
\item[(b)] $\mathscr{T}$ has continuous traces, $\mathscr{T}_\pm$, on $\Delta^\circ$ and $\displaystyle \mathscr{T}_+ = \mathscr{T}_- \left(\begin{array}{cc} (r_nc_nc)^+ & w \\ 0 & (r_nc_nc)^- \end{array}\right)$;
\item[(c)] $\mathscr{T}$ has the following behavior near $z=1$:
\[
\mathscr{T} = \left\{
\begin{array}{ll}
\displaystyle O\left(\begin{array}{cc}1&|1-z|^\alpha\\1&|1-z|^\alpha\end{array}\right), & \mbox{if} \quad \alpha<0, \smallskip \\
\displaystyle O\left(\begin{array}{cc}1&\log|1-z|\\1&\log|1-z|\end{array}\right), & \mbox{if} \quad \alpha=0, \smallskip \\
\displaystyle O\left(\begin{array}{cc}1&1\\1&1\end{array}\right), & \mbox{if} \quad \alpha>0, \\
\end{array}
\right. \quad \mbox{as} \quad D\ni z\to1;
\]
\item[(d)] $\mathscr{T}$ has the same behavior when $D\ni z\to-1$ as in (c) only with $\alpha$ replaced by $\beta$ and $1-z$ replaced by $1+z$.
\end{itemize}

Trivially, the following lemma holds.

\begin{lem}
\label{lem:rht}
\rht~ is solvable if and only if \rhy~ is solvable. When solutions of \rht~ and \rhy~ exist, they are unique and connected by \eqref{eq:t}.
\end{lem}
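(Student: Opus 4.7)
The plan is to verify that the explicit transformation \eqref{eq:t} is a bijection between solutions of \rhy~ and \rht, from which both the equivalence of solvability and the uniqueness assertion follow immediately. Since $\map$, $\szf_{(v_n/hh_n)}$ and $\epsilon_n$ are all analytic and nonvanishing in $D$, so is $E_n$, and therefore the relation can be inverted as $\mathscr{Y}=(2^n\epsilon_n)^{-\sigma_3}\mathscr{T}E_n^{\sigma_3}$; conjugation by the diagonal factor $E_n^{\pm\sigma_3}$ preserves analyticity in $D$, so in either direction the transformed matrix is analytic in $D$.

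I would then match the four items of the two problems in turn. For (a), the asymptotic $E_n(z)/[\epsilon_n(2z)^n]\to 1$ as $z\to\infty$, recorded just above the statement, immediately converts $\mathscr{Y}(z)z^{-n\sigma_3}\to\mathscr{I}$ into $\mathscr{T}(\infty)=\mathscr{I}$, and vice versa. For (b), the conjugation identity for the jump computed from \eqref{eq:encn} is precisely the transition from the jump matrix in \rhy(b) to the one in \rht(b); combined with $\mathscr{T}_\pm=(2^n\epsilon_n)^{\sigma_3}\mathscr{Y}_\pm (E_n^\pm)^{-\sigma_3}$, this yields the desired factorisation. For (c) and (d), the only thing to check is that $E_n$ and $E_n^{-1}$ are continuous and bounded in a full (one-sided) neighborhood of $\pm 1$ in $D$; this holds because $\map$ is analytic and nonvanishing at $\pm 1$, while $\szf_{(v_n/hh_n)}$ has continuous unrestricted boundary values there since its symbol $v_n/hh_n$ is H\"older continuous on $\Delta$ and nonvanishing. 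Hence conjugation by $E_n^{\pm\sigma_3}$ does not alter the prescribed endpoint orders of growth, and (c),(d) match.

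Finally, uniqueness in \rht~ follows from uniqueness in \rhy, already recalled at the start of Lemma \ref{lem:rhy}: two solutions of \rht~ would produce, via the inverse transformation, two solutions of \rhy, contradicting the classical determinant/Liouville argument. I do not foresee any serious obstacle here; the only point worth explicit justification is the boundedness and nonvanishing of $\szf_{(v_n/hh_n)}$ near $\pm 1$, which is precisely what the smoothness hypothesis \eqref{eq:ab} on $\theta$ (and the normal-family assumption on $\{\theta_n\}$) guarantees via the material of Section~\ref{subsec:szego}.
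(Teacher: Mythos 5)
Your argument is correct and matches the paper's, which after displaying the jump-conjugation identity and the normalization $\lim_{z\to\infty}(2^n\epsilon_n)^{\sigma_3}\mathscr{Y}E_n^{-\sigma_3}(z)=\mathscr{I}$ simply declares the lemma ``trivial.'' One tiny imprecision: $\map$ is not analytic at $\pm1$ (it has square-root branch points there), but it extends continuously with nonzero values, which is all your argument for (c),(d) actually needs.
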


\subsection{Opening the Lenses, Contours $\Sigma_{ext}$, $\Sigma_n$, and $\Sigma_n^{md}$}
\label{subsec:lenses}

As is standard in the Riemann-Hilbert approach, the second transformation of ~\rhy~ is based on the following factorization of the jump matrix in \rht(b):
\[
\left(\begin{array}{cc} (r_nc_nc)^+ & w \\ 0 & (r_nc_nc)^-\end{array}\right) = 
\left(\begin{array}{cc} 1 & 0 \\ (r_nc_nc)^-/w & 1 \end{array}\right) \left(\begin{array}{cc} 0 &w \\ -1/w & 0 \end{array}\right) \left(\begin{array}{cc} 1 & 0 \\ (r_nc_nc)^+/w & 1 \end{array}\right),
\]
where we took into account that $r_n^+r_n^-\equiv1$ on $\Delta$. This factorization leads us to consider a new Riemann-Hilbert problem with three jumps on a lens-shaped contour $\Sigma_n$ (see Fig.~\ref{fig:2}). 
\begin{figure}[!ht]
\label{fig:a}
\centering
\includegraphics[scale=.55]{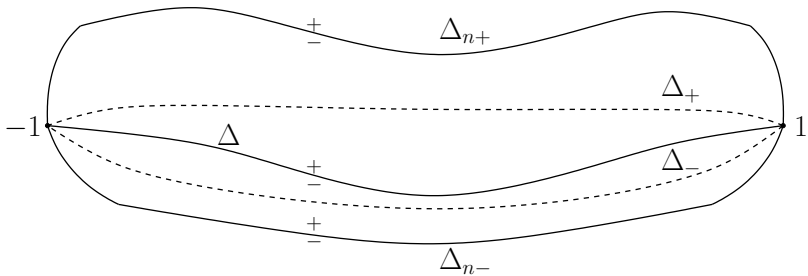}
\caption{\small The contour $\Sigma_n:=\Delta_{n+}\cup\Delta\cup\Delta_{n-}\subset \Xi(D_\Xi)$ (solid lines). The extension contour $\Sigma_{ext}:=\Delta_+\cup\Delta\cup\Delta_-$ (dashed lines and $\Delta$).}
\label{fig:2}
\end{figure}
However, to proceed with such a decomposition, we need to extend $c^\pm$ and $c_n^\pm$ to the complex plane. We shall do it in such a manner that the extended functions, denoted by $c$ and $c_n$, are analytic outside of a fixed lens $\Sigma_{ext}$ (see Fig.~\ref{fig:2}). We postpone this task until the next section and describe here the construction of the lenses $\Sigma_{ext}$ and $\Sigma_n$.

We start from $\Sigma_{ext}$. When $\Delta=[-1,1]$, fix $x>0$ and set $\Delta_+$ to be the subarc of the circle $\{z:~|z-ix|=|x+1|\}$ that lies in the upper half plane. Clearly, $\Delta_+$ joins $-1$ and $1$ and can be made as uniformly close to $[-1,1]$ as we want by taking $x$ sufficiently large. We set $\Delta_-$ to be the reflection of $\Delta_+$ across the real axis. We also denote by $\Omega_+$ and $\Omega_-$ the upper and the lower parts of the lens $\Sigma_{ext}$, i.e., $\Omega_+$ (resp. $\Omega_-$) is a domain bounded by $\Delta_+$ (resp. $\Delta_-$) and $\Delta$. When $\Delta$ is a general closed analytic arc parametrized by $\Xi$, set $\Sigma_{ext}$ to be the image under $\Xi$ of the corresponding lens for $[-1,1]$ (the latter can always be made small enough to lie in $D_\Xi$).

We continue by constructing the lens $\Sigma$, which will we the limit lens for the sequence $\{\Sigma_n\}$. Let $g^2$ and $O_g\subset \Xi(D_\Xi)$ be defined in \eqref{eq:gfun}. Then $1\in O_g$, $g^2(1)=0$, and $g^2$ is conformal in $O_g$. Set $U_\delta:=\{z:|z-1|<\delta\}$. Choose $\delta_0>0$ to be so small that $U_\delta\subset O_g$ and $g^2(U_\delta)$ is convex for any $\delta<\delta_0$. We require the same conditions to be fulfilled by $\delta_0$ and $\widetilde U_\delta:=\{z:|z+1|<\delta\}$ with respect to $\widetilde g^2$ and $O_{\widetilde g}$ also defined in \eqref{eq:gfun}. Fix $\delta<\delta_0$. Let Jordan arcs $K_j$, $j=1,3$, be the preimages of $\Sigma_1:=\{\zeta:~ \Arg(\zeta)=2\pi/3\}$ and $\Sigma_3 := \{\zeta:~ \Arg(\zeta)=-2\pi/3\}$ under $g^2$ in $U_\delta$. Let also $\widetilde K_j$, $j=1,3$, be the preimages of $\Sigma_1$ and $\Sigma_3$ under $\widetilde g^2$ in $\widetilde U_\delta$. Set $K_+:=K_1\cup K_2 \cup \widetilde K_3$, where $K_2$ is the image under $\Xi$ of the line segment  that joins $\Xi^{-1}(K_1\cap U_\delta)$. Set also $\widetilde \Xi^{-1}(\widetilde K_3\cap \widetilde U_\delta)$, and $K_-:=K_3\cup K_4 \cup \widetilde K_1$, where $K_4$ is the image under $\Xi$ of the line segment  that joins $\Xi^{-1}(K_3\cap U_\delta)$ and $\widetilde \Xi^{-1}(\widetilde K_1\cap \widetilde U_\delta)$. Then $\Delta_\pm$ are Jordan arcs that with endpoints $\pm1$. We define $\Sigma:=\Delta\cup K_+\cup K_-$ (see Fig.~\ref{fig:3}).

Let $g_n$ and $\widetilde g_n$ be defined by \eqref{eq:gnfun}. Assume that $\delta$ is small enough that $U_\delta\subset O_L$ and $\widetilde U_\delta\subset O_{\widetilde L}$. Then we construct the lens $\Sigma_n:=\Delta\cup\Delta_{n+}\cup\Delta_{n-}$ exactly as we constructed $\Sigma$ only with $g$, $\widetilde g$, and $\Xi$ replaced by $g_n$, $\widetilde g_n$, and $\Xi_n$, where we also employ the notation $\Delta_{n\pm}$ for the upper and lower lips of the lens.
\begin{figure}[!ht]
\label{fig:b}
\centering
\includegraphics[scale=.5]{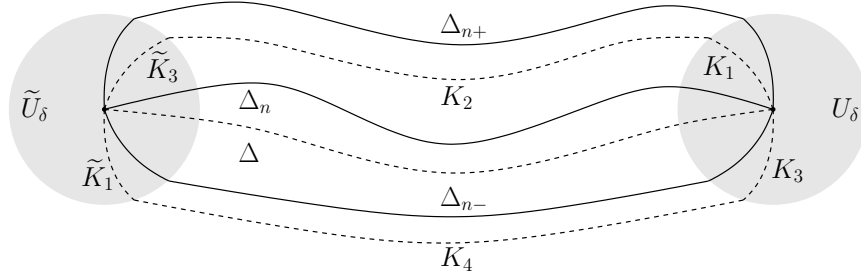}
\caption{\small Contours $\Sigma$ (dashed lines) and $\Sigma_n^{md}$ (solid lines). Neighborhoods $U_\delta$ and $\widetilde U_\delta$ (disks around $\pm1$).}
\label{fig:3}
\end{figure}
It can be easily seen that the arcs $\Delta_{n\pm}$ and $\Delta$ intersect only at $\pm1$ for all $n$ large enough since $\Delta_n$ approach $\Delta$ in a uniform manner by Theorem~\ref{thm:sp} and $\Delta_{n\pm}$ and $\Delta_n$ form angles $\pi/3$ at $1$ and $-1$ by construction.

Finally, it will be useful for us later to define one more system of contours, say $\Sigma_n^{md}$. The lens $\Sigma_n^{md}$ is obtained from $\Sigma_n$ simply by replacing $\Delta$ by $\Delta_n$ (see Fig.~\ref{fig:3}). We also require the lens $\Sigma_{ext}$ to be contained within each lens $\Sigma_n$ (see Fig.~\ref{fig:2}).

\subsection{Extension with Controlled $\bar\partial$ Derivative}
\label{subsec:ext}

Without loss of generality we may assume that $\Sigma_n\subset D_\Xi$ and all the functions $h_n$ are holomorphic in $D_\Xi$. By the very definition of $c_n^\pm$ we have that
\[
c_n^\pm=\gm_{h_n}\left(\szf_{h_n}^\pm\right)^2h_n^{-1}.
\]
Thus, there is a natural holomorphic extension of each $c_n^\pm$ given by 
\begin{equation}
\label{eq:extcn}
c_n:=\gm_{h_n}\szf_{h_n}^2h_n^{-1} \quad \mbox{in} \quad D_\Xi\setminus\Delta.
\end{equation}
Concerning the extension of $c$, we can prove the following.

\begin{lem}
\label{lem:extension}
Let $\theta\in\sof^{1-1/p}_p$, $p\in(2,\infty)$, or $\theta\in\cf^{m,\varsigma}$, $m\in\Z_+$, $\varsigma\in(0,1]$, $m+\varsigma>\frac12$. Then there exists a function $c$, continuous in $\C\setminus\Delta$ and up to $\Delta^\pm$, satisfying 
\[
c_{|\Delta^\pm} = c^\pm, \quad c = \exp\{\sr\ell\} \quad \mbox{in} \quad \overline\C\setminus(\overline{\Omega_+\cup\Omega_-}), \quad \mbox{and} \quad \bar\partial c=cf,
\]
where $\ell$ is a polynomial, $\deg(\ell)\leq2m+1$, $f\in\lf^p(\Omega_\pm)$ when $\theta\in\sof^{1-1/p}_p$, $f\in\lf^q(\Omega_\pm)$, $q\in\left(2,\frac{1}{1-\varsigma}\right)$ when $\theta\in\cf^{0,\varsigma}$, and $f\in\cf^{m-1,\varsigma-\epsilon}_0(\overline{\Omega_\pm})$ when $\theta\in\cf^{m,\varsigma}$, $m\in\N$, and $\epsilon\in(0,\varsigma)$.
\end{lem}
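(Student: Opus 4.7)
The plan is to extend the boundary identity $c^\pm = \exp\{\sr^\pm \os(\theta/\sr^+)\}$ from \eqref{eq:c} into the plane by first extracting its ``non-smooth'' part and then applying the trace–extension theorems from Section~\ref{sec:extension}. The key point is to isolate the polynomial piece that the singular integral produces, so as to reduce the whole problem to extending a function that vanishes at the endpoints $\pm 1$ to the required order.

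First I would invoke Proposition~\ref{lem:smooth1} (in the $\sof^{1-1/p}_p$-case) or Proposition~\ref{lem:smooth2} (in the $\cf^{m,\varsigma}$-case) to obtain the decomposition
\[
\sr^\pm \os(\theta/\sr^+) = \pm d + \sr^\pm \ell,
\]
where $\ell$ is a polynomial of degree at most $2m+1$ and $d$ vanishes at $\pm 1$ (together with its derivatives up to order $m$ in the $\cf^{m,\varsigma}$-case). Thus on $\Delta$ we have $c^\pm = \exp\{\pm d\}\,\exp\{\sr^\pm \ell\}$, and since the second factor is already holomorphic in $\overline\C\setminus\Delta$, the task reduces to extending $\exp\{\pm d\}$ into each lens half. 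For this I would use Proposition~\ref{pr:corners1} in the Sobolev/pure-H\"older case and Proposition~\ref{pr:corners2} in the higher-H\"older case to produce functions $D_\pm$ on $\overline{\Omega_\pm}$ with $D_{\pm\,|\Delta} = \pm d$, $D_{\pm\,|\Delta_\pm}\equiv 0$, and with $\bar\partial D_\pm$ in the very regularity class demanded for $f$. The vanishing of $d$ (and its derivatives) at $\pm 1$ is precisely the compatibility condition at the corners of $\Omega_\pm$ needed to invoke those trace theorems.

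Having $D_\pm$ in hand, I would \emph{define}
\[
c(z) := \begin{cases} \exp\{D_\pm(z) + \sr(z)\ell(z)\}, & z\in \Omega_\pm, \\[2pt] \exp\{\sr(z)\ell(z)\}, & z\in\overline\C\setminus\overline{\Omega_+\cup\Omega_-}. \end{cases}
\]
Continuity across $\Delta_\pm$ is immediate from $D_\pm\equiv 0$ there, and the boundary identity $c_{|\Delta^\pm}=c^\pm$ is just the decomposition read on $\Delta$. Outside $\overline{\Omega_+\cup\Omega_-}$ the function $c$ is holomorphic, so $\bar\partial c=0$; inside $\Omega_\pm$ the factor $\sr\ell$ is holomorphic, hence $\bar\partial c = c\,\bar\partial D_\pm$, and one sets $f:=\bar\partial D_\pm$ on $\Omega_\pm$ (extended by zero outside).

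The main obstacle is purely bookkeeping: the propositions of Sections~\ref{sec:extension}--\ref{sec:bvp} deliver $d$ and $D_\pm$ in slightly different regularity classes depending on whether $\theta$ lies in $\sof^{1-1/p}_p$, in $\cf^{0,\varsigma}$ with $\varsigma>\tfrac12$, or in $\cf^{m,\varsigma}$ with $m\geq 1$, so one must verify in each of the three cases that the chain $\theta\to d\to D_\pm\to\bar\partial D_\pm$ delivers precisely the regularity claimed for $f$. In particular, the $\epsilon$-loss at the borderline exponents $\varsigma=\tfrac12,1$ in Proposition~\ref{lem:smooth2} is exactly what forces the $\varsigma-\epsilon$ in the conclusion, and the hypothesis $m+\varsigma>\tfrac12$ is what guarantees that Proposition~\ref{pr:corners1} (with some $q>2$) or Proposition~\ref{pr:corners2} is applicable.
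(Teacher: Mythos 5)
Your proposal is correct and matches the paper's own (one-sentence) proof: starting from \eqref{eq:c}, decomposing via Propositions~\ref{lem:smooth1} and \ref{lem:smooth2} to isolate the polynomial part $\sr\ell$, extending the remainder $\pm d$ into $\Omega_\pm$ with vanishing trace on $\Delta_\pm$ via Propositions~\ref{pr:corners1} and \ref{pr:corners2}, and setting $c=\exp\{D_\pm+\sr\ell\}$ inside the lens and $c=\exp\{\sr\ell\}$ outside. The only caveat, inherited from Proposition~\ref{lem:smooth1} which yields $d\in\sof^{1-1/q}_q$ for $q<p$ rather than $q=p$, is that the conclusion $f\in\lf^p(\Omega_\pm)$ should really be read as $f\in\lf^q(\Omega_\pm)$ for every $q<p$ --- harmless for the later use in \eqref{eq:bnd2w}, and not a flaw in your reasoning.
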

\begin{proof}
This lemma is a straightforward consequence of \eqref{eq:c}, inclusion $\cf^{0,\varsigma}\subset\sof^{1-1/q}_q$ for $q\in\left(2,\frac{1}{1-\varsigma}\right)$, and Propositions~\ref{lem:smooth1} and \ref{lem:smooth2}  combined with Propositions~\ref{pr:corners1} and \ref{pr:corners2}.
\end{proof}

\subsection{Formulation of Riemann-Hilbert-$\bar\partial$ Problem}

In this section we reformulate \rht~ as a Riemann-Hilbert-$\bar\partial$ problem. In what follows, we understand under $c$ and $c_n$ the extensions obtained in Section \ref{subsec:ext} above. Suppose that \rht~ is solvable and $\mathscr{T}$ is the solution. We define a matrix function $\mathscr{S}$ on $\overline\C\setminus\Sigma_n$ as follows:
\begin{equation}
\label{eq:s}
\mathscr{S}:= \left\{
\begin{array}{ll}
\mathscr{T} \left(\begin{array}{cc}1&0 \\ \mp r_nc_nc/w & 1 \end{array}\right), & \mbox{in} \ \Omega_{n\pm}, \smallskip \\
\mathscr{T}, & \mbox{outside the lens} \ \Sigma_n,
\end{array}
\right.
\end{equation}
where the upper part, $\Omega_{n+}$, (resp. lower part, $\Omega_{n-}$) of the lens $\Sigma_n$ is a domain bounded by $\Delta_{n+}$ (resp. $\Delta_{n-}$) and $\Delta$. This new matrix function is no longer analytic in general in the whole domain $D$ since $c$ may not be analytic inside the extension lens $\Sigma_{ext}$. Recall, however, that by the very construction, $c$ coincides with a holomorphic function $\widetilde c=\exp\{\sr\ell\}$ outside the lens $\Sigma_{ext}$. To capture the non-analytic character of $\mathscr{S}$, we introduce the following matrix function that will represent the deviation from analyticity:
\begin{equation}
\label{eq:dfaw0}
\mathscr{W}_0 := \left\{
\begin{array}{ll}
\displaystyle \left(\begin{array}{cc} 0 & 0 \\ \mp r_nc_n \bar\partial c/w & 0 \end{array}\right), & \mbox{in} \ \Omega_{\pm}, \smallskip \\
\left(\begin{array}{cc} 0 & 0 \\ 0 & 0\end{array}\right), & \mbox{outside the lens} \ \Sigma_{ext}.
\end{array}
\right.
\end{equation}
Then $\mathscr{S}$ solves the following Riemann-Hilbert-$\bar\partial$ problem (\rhds):
\begin{itemize}
\item[(a)] $\mathscr{S}$ is a continuous matrix function in $\overline\C\setminus\Sigma_n$ and $\mathscr{S}(\infty)=\mathscr{I}$;
\item[(b)] $\mathscr{S}$ has continuous boundary values, $\mathscr{S}_\pm$, on $\Sigma_n^\circ:=\Sigma_n\setminus\{\pm1\}$ and
\begin{eqnarray}
\mathscr{S}_+ &=& \mathscr{S}_- \left(\begin{array}{cc}1&0 \\ r_nc_nc/w & 1 \end{array}\right) \quad \mbox{on} \quad \Delta_{n+}^\circ\cup\Delta_{n-}^\circ, \nonumber \\
\mathscr{S}_+ &=& \mathscr{S}_- \left(\begin{array}{cc} 0 & w \\ -1/w & 0 \end{array}\right) \quad \mbox{on} \quad\Delta^\circ; \nonumber
\end{eqnarray}
\item[(c)] For $\alpha<0$, $\mathscr{S}$ has the following behavior near $z=1$:
\[
\mathscr{S}(z) = O\left(\begin{array}{cc}1&|1-z|^\alpha\\1&|1-z|^\alpha\end{array}\right),  \quad \mbox{as} \quad \C\setminus\Sigma_n\ni z\to1.
\]
For $\alpha=0$, $\mathscr{S}$ has the following behavior near $z=1$:
\[
\mathscr{S}(z) =  O\left(\begin{array}{cc} \log|1-z|&\log|1-z| \\ \log|1-z|&\log|1-z| \end{array}\right) \mbox{as} \quad \C\setminus\Sigma_n\ni z\to1.
\]
For $\alpha>0$, $\mathscr{S}$ has the following behavior near $z=1$:
\[
\mathscr{S}(z) = \left\{
\begin{array}{ll}
\displaystyle O\left(\begin{array}{cc}1&1\\1&1\end{array}\right), & \mbox{as} \ z\to1 \ \mbox{outside the lens} \ \Sigma_n, \smallskip \\
\displaystyle O\left(\begin{array}{cc}|1-z|^{-\alpha}&1\\|1-z|^{-\alpha}&1\end{array}\right), & \mbox{as} \ z\to1 \ \mbox{inside the lens} \ \Sigma_n;
\end{array}
\right.
\]
\item[(d)] $\mathscr{S}$ has the same behavior when $\C\setminus\Sigma_n\ni z\to-1$ as in (c) only with $\alpha$ replaced by $\beta$ and $1-z$ replaced by $1+z$;
\item[(e)] $\mathscr{S}$ deviates from an analytic matrix function according to $\bar\partial\mathscr{S}=\mathscr{S}\mathscr{W}_0$.
\end{itemize}

Then the following lemma holds.

\begin{lem}
\label{lem:rhs}
\rhds~ is solvable if and only if \rht~ is solvable. When solutions of \rhds~ and \rht~ exist, they are unique and connected by \eqref{eq:s}.
\end{lem}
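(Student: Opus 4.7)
The transformation \eqref{eq:s} is pointwise invertible: setting $M_\pm := \bigl(\begin{smallmatrix} 1 & 0 \\ \mp r_n c_n c/w & 1 \end{smallmatrix}\bigr)$ inside $\Omega_{n\pm}$ and $\mathscr{I}$ outside $\Sigma_n$, one has $M_\pm^{-1} = \bigl(\begin{smallmatrix} 1 & 0 \\ \pm r_n c_n c/w & 1 \end{smallmatrix}\bigr)$. The plan is to show that $\mathscr{T}\mapsto\mathscr{S}$ defined by \eqref{eq:s} sends solutions of \rht{} bijectively to solutions of \rhds{}; uniqueness of $\mathscr{S}$ then transfers directly from the uniqueness statement in Lemma~\ref{lem:rht}.

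For the forward direction I will check each clause of \rhds{} in turn. Clause \rhds(a) is immediate: $\mathscr{T}$ is analytic in $D$ by \rht(a), $M_\pm$ is continuous on $\overline{\Omega_{n\pm}}\setminus\{\pm 1\}$ since $c$ is continuous there by Lemma~\ref{lem:extension} and $r_n,c_n,1/w$ are holomorphic off $\Delta$, and $\mathscr{S}(\infty)=\mathscr{T}(\infty)=\mathscr{I}$ because $\mathscr{S}=\mathscr{T}$ outside $\Sigma_n$. The jump on $\Delta_{n\pm}^\circ$ in \rhds(b) is simply the discontinuity of the multiplier (identity outside the lens, $M_\pm$ inside). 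The jump on $\Delta^\circ$ is built into the factorization displayed at the start of Section~\ref{subsec:lenses}: using $r_n^+r_n^- = c^+c^- = c_n^+c_n^- \equiv 1$ on $\Delta$ together with \eqref{eq:encn}, a direct $2\times 2$ computation yields
\[
J\,M_+ \;=\; M_- \begin{pmatrix} 0 & w \\ -1/w & 0 \end{pmatrix},
\]
where $J$ denotes the \rht(b) jump matrix, so that $\mathscr{S}_+ = \mathscr{T}_+M_+ = \mathscr{T}_-JM_+ = \mathscr{S}_-\bigl(\begin{smallmatrix} 0 & w \\ -1/w & 0 \end{smallmatrix}\bigr)$ as required.

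The endpoint clauses \rhds(c),(d) follow by plugging the estimates in \rht(c),(d) into \eqref{eq:s}: the off-diagonal entry of $M_\pm$ blows up like $|1-z|^{-\alpha}$ near $z=1$ and $|1+z|^{-\beta}$ near $z=-1$, so when the relevant exponent is non-positive it is absorbed by the existing bounds, while a positive exponent produces precisely the enhanced first-column blow-up inside the lens recorded in \rhds(c),(d). For clause \rhds(e), $\mathscr{T}$ is holomorphic in $\Omega_{n\pm}\setminus\Delta$ and so are $r_n,c_n,1/w$, so Leibniz's rule gives
\[
\bar\partial\mathscr{S} \;=\; \mathscr{T} \begin{pmatrix} 0 & 0 \\ \mp r_n c_n\,\bar\partial c/w & 0 \end{pmatrix};
\]
by Lemma~\ref{lem:extension} we have $\bar\partial c \equiv 0$ outside $\Omega_+\cup\Omega_-$, so this matches $\mathscr{T}\mathscr{W}_0$ everywhere. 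Finally $M_\pm\mathscr{W}_0=\mathscr{W}_0$ by a one-line calculation (both $M_\pm-\mathscr{I}$ and $\mathscr{W}_0$ are strictly lower-triangular rank-one matrices, whose matrix product necessarily vanishes), hence $\mathscr{T}\mathscr{W}_0 = \mathscr{T}M_\pm\mathscr{W}_0 = \mathscr{S}\mathscr{W}_0$, which closes the $\bar\partial$ equation.

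The reverse direction is the same computation read in the opposite order: starting from $\mathscr{S}$ solving \rhds{}, set $\mathscr{T}:=\mathscr{S}M_\pm^{-1}$ inside $\Omega_{n\pm}$ and $\mathscr{T}:=\mathscr{S}$ outside; the jumps on $\Delta_{n\pm}$ cancel by construction, the jump on $\Delta$ reassembles into $J$ by the same factorization, the endpoint bounds reverse through $M_\pm^{-1}$, and the identities $\mathscr{W}_0 M_\pm^{-1} = M_\pm^{-1}\mathscr{W}_0 = \mathscr{W}_0$ (same rank-one argument) reduce $\bar\partial\mathscr{T}$ to $\mathscr{S}\mathscr{W}_0 - \mathscr{S}\mathscr{W}_0 = 0$. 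The one delicate point throughout is the endpoint analysis in clause (c) for $\alpha>0$, where the enhanced first-column blow-up permitted inside the lens must cancel exactly against the singular entry of $M_\pm^{-1}$ to recover the milder bound in \rht(c); this is the main subtlety and is precisely the reason why clauses \rhds(c),(d) are split by the sign of the exponent.
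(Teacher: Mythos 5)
The forward direction is handled correctly and in more detail than the paper (which disposes of it in one sentence), and your verification of the jump factorization, the $\bar\partial$ equation, and the identity $M_\pm\mathscr{W}_0=\mathscr{W}_0$ are all fine. The reverse direction, however, has a genuine gap exactly where you flag the ``one delicate point,'' and the gap is not resolved.

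When $\alpha\ge 0$ (not only $\alpha>0$ --- for $\alpha=0$ the first column of $\mathscr{S}^*$ is only $O(\log|1-z|)$, while \rht(c) requires it to be $O(1)$), the computation $\mathscr{T}^*_{j1}=\mathscr{S}^*_{j1}+\mathscr{S}^*_{j2}\cdot(\pm r_nc_nc/w)$ produces two terms that are each $O(|1-z|^{-\alpha})$ inside the lens (resp.\ $O(\log|1-z|)$). Nothing in the bounds furnished by \rhds(c) forces these two terms to cancel; asserting that they ``must cancel exactly'' is restating the desired conclusion, not proving it. The paper's argument is genuinely different in nature: it observes that each $\mathscr{T}^*_{j1}$ is analytic in $D$, satisfies the scalar boundary value problem $\phi^+=\phi^-(r_nc_nc)^+$ on $\Delta$, and is $O(1)$ (or $O(\log)$) as $z\to\pm1$ \emph{outside} the lens because there $\mathscr{T}^*=\mathscr{S}^*$. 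Dividing by the canonical solution $\phi_c$ of the scalar problem yields a function holomorphic in $\C\setminus\{\pm1\}$; the mild growth outside the lens rules out a pole, so the singularities at $\pm1$ are removable, and boundedness of $\mathscr{T}^*_{j1}$ near $\pm1$ follows. Your proposal needs this removable-singularity/canonical-solution argument (or something equivalent) to close the $\alpha\ge 0$ and $\beta\ge 0$ cases; without it, the reverse implication is not established.
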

\begin{proof}
By construction, the solution of \rht~ yields a solution of \rhds. Conversely, let $\mathscr{S}^*$ be a solution of \rhds. It is easy to check using the Leibnitz's rule that $\bar\partial\mathscr{T}^*$ is equal to the zero matrix outside of $\Sigma_n$, where $\mathscr{T}^*$ is obtained from $\mathscr{S}^*$ by inverting (\ref{eq:s}). Thus, $\mathscr{T}^*$ is an analytic matrix function in $\overline\C\setminus\Sigma_n$ with continuous boundary values on each side of $\Sigma_n^\circ$. Moreover, it can be readily verified that $\mathscr{T}^*$ has no jumps on $\Delta_{n\pm}^\circ$ and therefore is, in fact, analytic in $D$. It is aslo obvious that it equals to the identity matrix at infinity and has a jump on $\Delta$ described by \rht(b). Thus, $\mathscr{T}^*$ complies with \rht(a)--(b). 

Now, if $\alpha,\beta<0$ then it follows from \rhds(c)--(d) and (\ref{eq:s}) that $\mathscr{T}^*$ has the same behavior near endpoints $\pm1$ as $\mathscr{S}^*$. Therefore, $\mathscr{T}^*$ solves \rht~ in this case. When either $\alpha$ or $\beta$ is nonnegative, it is no longer immediate that the first column of $\mathscr{T}^*$ has the behavior near $\pm1$ required by \rht(c)--(d). This difficulty was resolved in \cite[Lem. 4.1]{KMcLVAV04} by considering $\mathscr{T}^*\mathscr{T}^{-1}$, where $\mathscr{T}$ is the unique solution of \rht. However, in the present case it is not clear that such a solution exists (see Lemma~\ref{lem:rhy}). Thus, we are bound to consider the first column of $\mathscr{T}^*$ by itself.

Denote by $\mathscr{T}^*_{11}$ and $\mathscr{T}^*_{21}$ the $11$- and $21$-entries of $\mathscr{T}^*$. Then $\mathscr{T}^*_{11}$ and $\mathscr{T}^*_{21}$ are analytic functions in $D$ with the following behavior near $1$: 
\begin{equation}
\label{eq:neartheendentires}
\mathscr{T}^*_{j1}(z) = \left\{
\begin{array}{ll}
O(1), & \mbox{if} \quad \alpha<0 \smallskip \\
O(\log|1-z|), & \mbox{if} \quad \alpha=0, \smallskip \\
O(|1-z|^{-\alpha}), & \mbox{if} \quad \alpha>0 \ \mbox{and} \ z \ \mbox{is inside the lens}, \smallskip \\
O(1), & \mbox{if} \quad \alpha>0 \ \mbox{and} \ z \ \mbox{is outside the lens},
\end{array}
\right.
\end{equation}
for $j=1,2$. The behavior near $-1$ is identical only with $\alpha$ replaced by $\beta$ and $1-z$ replaced by $1+z$. Moreover, each $\mathscr{T}^*_{j1}$ solves the following scalar boundary value problem:
\begin{equation}
\label{eq:bvprn}
\phi^+ = \phi^-(r_nc_nc)^+ \quad \mbox{on} \quad\Delta, \quad \phi\in\hf(D).
\end{equation}
Now, recall that $r_n^+r_n^-\equiv1$ on $\Delta$ and $r_n$ has $2n$ zeros in $D$ that lie away from the lens $\Sigma_n$. Hence, the argument of $r_n^+$ increases by $2\pi n$ when $\Delta$ is traversed from $-1$ to $1$. Moreover, for $c^+$ and each $c_n^+$ a branch of the argument can be taken continuous and vanishing at $\pm1$ (it is the imaginary part of $\sr^+\os(\theta/\sr^+)$, which is continuous and vanishing at $\pm1$ by Propositions \ref{lem:smooth1} and \ref{lem:smooth2}). Define $\varrho := \log (r_nc_nc)^+$, $\varrho(-1)=0$. This normalization is possible since $r_n^+(-1)=1$ as $r_n^+$ is a product of $2n$ factors each of which is equal to $-1$ at $-1$. Furthermore, this normalization necessarily yields that $\varrho(1)=2\pi ni$ and that the so-called canonical solution of the problem (\ref{eq:bvprn}) is given by \cite[Sec. 43.1]{Gakhov}
\[
\phi_c(z) := (z-1)^{-n}\exp\left\{\oc(\varrho;z)\right\}, \quad z\in D.
\]
Recall that $\phi_c$ is bounded above and below in the vicinities of 1 and $-1$, has a zero of order $n$ at infinity, and otherwise is non-vanishing. Hence, the functions $\phi_j:=\mathscr{T}^*_{j1}/\phi_c$, $j=1,2$, are analytic in $\C\setminus\{\pm1\}$. Moreover, according to (\ref{eq:neartheendentires}), the singularities of these functions at 1 and $-1$ cannot be essential, they are either removable or polar. In fact, since $\phi_j(z)=O(1)$ or $\phi_j(z)=O(\log|1\pm z|)$ when $z$ approaches 1 or $-1$ outside of the lens, $\phi_j$ can have only removable singularities at these points. Hence, $\phi_j(z)=O(1)$ and subsequently $\mathscr{T}^*_{j1}=O(1)$  near 1 and $-1$. Thus, $\mathscr{T}^*$ satisfies \rht(c)--(d) for all $\alpha$ and $\beta$, which means that $\mathscr{T}^*$ is the solution of \rht. Therefore, indeed, the problems \rht~ and \rhds~ are equivalent.
\end{proof}

\section{Analytic Approximation of \rhds}
\label{sec:aa}

Elaborating on the path developed in \cite{McLM08}, we put \rhds~ aside for a while and consider an analytic approximation of this problem. In other words, we seek the solution of the following Riemann-Hilbert problem (\rha):
\begin{itemize}
\item[(a)] $\mathscr{A}$ is a holomorphic matrix function in $\overline\C\setminus\Sigma_n$ and $\mathscr{A}(\infty)=\mathscr{I}$;
\item[(b)] $\mathscr{A}$ has continuous traces, $\mathscr{A}_\pm$, on $\Sigma_n^\circ$ that satisfy the same relations as in \rhds(b);
\item[(c)] the behavior of $\mathscr{A}$ near 1 is described by \rhds(c);
\item[(d)] the behavior of $\mathscr{A}$ near $-1$ is described by \rhds(d).
\end{itemize}
Before we proceed, observe that the function $c$ coincides on $\Delta_{n\pm}$ with the analytic function $\widetilde c := \exp\{\sr\ell\}$, where $\ell$ is a polynomial, by construction. Hence, we can assume that the jump matrix in \rha(b) is expressed in terms of $\widetilde c$ rather than $c$.

\subsection{Modified \rha}
The problem above almost falls into the scope of the classical approach to asymptotics of orthogonal polynomials. We say ``almost'' because it is not generally true that the functions $r_n$ can be written as the $2n$-th power of a single function, even up to a normal family as is the case in \cite[Thm. 2]{Ap02}. This will explain why we constructed another lens, $\Sigma_n^{md}$, in Section \ref{subsec:lenses}. 

Consider the following Riemann-Hilbert problem (\rhb):
\begin{itemize}
\item[(a)] $\mathscr{B}$ is a holomorphic matrix function in $\overline\C\setminus\Sigma_n^{md}$ and $\mathscr{B}(\infty)=\mathscr{I}$;
\item[(b)] $\mathscr{B}$ has continuous traces, $\mathscr{B}_{\pm}$, on $(\Sigma_n^{md})^\circ$ that satisfy
\[
\begin{array}{llll}
\mathscr{B}_+ &=& \mathscr{B}_- \left(\begin{array}{cc}1 & 0 \\ r_nc_n\widetilde c/w & 1 \end{array}\right) & \mbox{on} \quad \Delta_{n+}^\circ\cup\Delta_{n-}^\circ, \smallskip \\
\mathscr{B}_+ &=& \mathscr{B}_- \left(\begin{array}{cc} 0 & w \\ -1/w & 0 \end{array}\right) & \mbox{on} \quad \Delta_n^\circ;
\end{array}
\]
\item[(c)] the behavior of $\mathscr{B}$ near 1 is described by \rha(c) with respect to the lens $\Sigma_n^{md}$;
\item[(d)] the behavior of $\mathscr{B}$ near $-1$ is described by \rha(d), again, with respect to $\Sigma_n^{md}$.
\end{itemize}
In fact, this new problem is equivalent to \rha.
\begin{lem}
\label{lem:rhab}
The problems \rha~ and \rhb~ are equivalent.
\end{lem}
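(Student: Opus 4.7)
\textit{Proof plan.} The two problems share their outer-lip jumps on $\Delta_{n\pm}$ (in both cases $\begin{pmatrix}1&0\\ r_nc_n\widetilde c/w&1\end{pmatrix}$) and their middle jumps agree in form, $V:=\begin{pmatrix}0&w\\-1/w&0\end{pmatrix}$, differing only in whether the middle contour is $\Delta$ or $\Delta_n$. Since $\Xi_n\to\Xi$ uniformly by Theorem~\ref{thm:sp}(i) and both $\Delta,\Delta_n$ have endpoints $\pm1$, for all $n$ large the two arcs bound a finite union of thin ``sliver'' subdomains, cusped at $\pm 1$ and lying strictly inside the region enclosed by $\Delta_{n+}\cup\Delta_{n-}$; in each sliver $V$ is analytic because $w$ extends analytically across $\Delta^\circ$.

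The strategy is a standard contour-deformation. Given a solution $\mathscr A$ of \rha, define
\[
\mathscr B := \mathscr A\,V^{-1} \text{ in a sliver where } \Delta_n \text{ lies on the } \Delta^+\text{-side}, \quad \mathscr B := \mathscr A\,V \text{ in a sliver where } \Delta_n \text{ lies on the } \Delta^-\text{-side},
\]
and $\mathscr B:=\mathscr A$ elsewhere in $\overline\C\setminus\Sigma_n^{md}$. I would then verify the following directly from the construction: (i) $\mathscr B$ is analytic off $\Sigma_n^{md}$ (since $V^{\pm1}$ is analytic in each sliver); (ii) $\mathscr B(\infty)=\mathscr I$ (the modification is bounded); (iii) the $V$-jump of $\mathscr A$ on $\Delta^\circ$ is cancelled by the multiplicative factor, so $\mathscr B$ extends continuously across $\Delta^\circ$; (iv) on $\Delta_n^\circ$ the product picks up a fresh jump equal to $V$, matching \rhb(b); (v) on $\Delta_{n\pm}^\circ$, which lie outside every sliver, $\mathscr B=\mathscr A$ inherits the correct jump from \rha(b). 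The inverse assignment $\mathscr A:=\mathscr B\,V^{\pm1}$ (with signs flipped) yields the converse implication.

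The only nontrivial point is the matching of endpoint behaviors \rhb(c)--(d) at $\pm1$, where the slivers degenerate to cusps and $V^{\pm1}$ carries factors of order $(1\mp z)^{\pm\alpha}$ near $1$ and $(1\pm z)^{\pm\beta}$ near $-1$. I would split on the signs of $\alpha,\beta$ and check that the orders of the entries of $\mathscr A\,V^{\pm1}$ inside each sliver are exactly the ``inside-the-lens'' orders prescribed by \rhds(c)--(d) with respect to $\Sigma_n^{md}$. For instance, when $\alpha>0$, the first column of $\mathscr A\,V^{-1}$ is $(\mathscr A_{12}/w,\,\mathscr A_{22}/w)^{\mathrm T}=O(|1-z|^{-\alpha})$ with $\mathscr A_{12},\mathscr A_{22}=O(1)$, which is precisely the inside-lens order required for $\mathscr B$, since the sliver lies inside $\Sigma_n^{md}$; the $\alpha=0$ and $\alpha<0$ cases (and their analogues in $\beta$) follow along the same lines, each time the power of $1/w$ gained from multiplication by $V^{\pm1}$ conspiring with the corresponding entry of $\mathscr A$ to yield the correct order. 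This endpoint bookkeeping, though routine, is the main technical hurdle.
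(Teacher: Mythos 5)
Your proposal is correct and follows essentially the same contour-deformation route as the paper: in the ``sliver'' components of $\Omega_{n\pm}\cap\Omega^{md}_{n\mp}$ multiply by $V^{\pm1}$ to swap the middle jump from $\Delta$ to $\Delta_n$, observe that the outer-lip jumps and normalization at infinity are untouched, and do the endpoint bookkeeping by combining the orders of $\mathscr{A}$ with those of $V^{\pm1}=O\left(\begin{smallmatrix}0&|1-z|^{\alpha}\\|1-z|^{-\alpha}&0\end{smallmatrix}\right)$ near $1$. The paper simply works in the opposite direction (from $\mathscr{B}$ to $\mathscr{A}^*$, since that is the direction actually used in Lemma~\ref{lem:rhar}) and notes that the argument reverses, so there is no meaningful difference; one small correction is that the finiteness of the sliver components is a consequence of the analyticity of the two arcs (distinct analytic arcs intersect in isolated points), not of the uniform convergence $\Xi_n\to\Xi$.
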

\begin{proof}
Suppose that \rhb~ is solvable and $\mathscr{B}$ is a solution. As before, let $\Omega_{n+}$ (resp. $\Omega_{n-}$) be the upper (resp. lower) part of the lens $\Sigma_n$. Analogously define $\Omega_{n\pm}^{md}$ and set
\begin{equation}
\label{eq:a}
\mathscr{A}^* := \left\{
\begin{array}{ll}
\mathscr{B} \left(\begin{array}{cc} 0 & w \\ -1/w & 0 \end{array}\right), & \mbox{in} \quad \Omega_{n+}\cap\Omega_{n-}^{md}, \smallskip \\
\mathscr{B} \left(\begin{array}{cc} 0 & -w \\ 1/w & 0 \end{array}\right), & \mbox{in} \quad \Omega_{n-}\cap\Omega_{n+}^{md}, \smallskip \\
\mathscr{B}, & \mbox{elsewhere}.
\end{array}
\right.
\end{equation}
Observe that $\Omega_{n\pm}\cap\Omega^{md}_{n\mp}$ is a finite, possibly empty, union of Jordan domains by analyticity of $\Delta$ and $\Delta_n$. It is a routine exercise to verify that $\mathscr{A}^*$ complies with \rha(a) and~(b). Moreover, within $\Omega_{n+}\cap\Omega_{n-}^{md}$ and $\Omega_{n-}\cap\Omega_{n+}^{md}$ we have that for $\alpha<0$, $\mathscr{A}^*$ has the following behavior near $z=1$:
\[
\mathscr{A}^*(z) = O\left(\begin{array}{cc} 1 & |1-z|^\alpha \\ 1 & |1-z|^\alpha\end{array}\right)  O\left(\begin{array}{cc} 0 & |1-z|^\alpha \\ |1-z|^{-\alpha} & 0 \end{array}\right) = O\left(\begin{array}{cc} 1 & |1-z|^\alpha \\ 1 & |1-z|^\alpha\end{array}\right),  
\]
as $z\to1$; for $\alpha=0$, $\mathscr{A}^*$ has the same behavior near $z=1$ as $\mathscr{B}$ since the latter is multiplied by a bounded matrix near $1$; for $\alpha>0$, $\mathscr{A}^*$ has the following behavior near $z=1$:
\[
\mathscr{A}^*(z) = O\left(\begin{array}{cc} |1-z|^{-\alpha} & 1 \\ |1-z|^{-\alpha} & 1 \end{array}\right) O\left(\begin{array}{cc} 0 & |1-z|^\alpha \\ |1-z|^{-\alpha} & 0 \end{array}\right) = O\left(\begin{array}{cc} |1-z|^{-\alpha} & 1 \\ |1-z|^{-\alpha} & 1 \end{array}\right).
\]
Hence, $\mathscr{A}^*$ has exactly the behavior near $1$ required by \rha(c). In the same fashion one can check that $\mathscr{A}^*$ satisfies \rha(d)~ and therefore it is, in fact, a solution of \rha. Clearly, the arguments above could be reversed and hence each solution of \rha~ yields a solution of \rhb. 
\end{proof}

Let us now alleviate the notation by making a slightly abuse of it. Throughout this section, we shall understand under $\map$, $r_n$, $g_n$, $\widetilde g_n$, $c_n$, $\widetilde c$, $\sr$, $\szf_{h_n}$, $\szf_{\sr^+}$, and $\szf_{w}$ their holomorphic continuations that are analytic outside of $\Delta_n$ rather than $\Delta$. Note that outside the bounded set with boundary $\Delta\cup \Delta_n$ these continued functions coincide with the original ones. Moreover, their values considered within the interior domain of $\Delta_n\cup\Delta$ can be obtained through analytic continuation of the original functions across $\Delta$.

\subsection{Auxiliary Riemann-Hilbert Problems}

In this subsection we define the necessary objects to solve \rhb. This material essentially appeared in \cite{KMcLVAV04} for the case $\Delta=[-1,1]$.

\subsubsection{Parametrix away from the endpoints} As $r_n$ converges to zero geometrically fast away from $\Delta_n$, the jump matrix in \rhb(b)~ is close to the identity on $\Delta_{n+}^\circ$ and $\Delta_{n-}^\circ$. Thus, the main term of the asymptotics for $\mathscr{B}$ in $D_n=\overline\C\setminus\Delta_n$ is determined by the following Riemann-Hilbert problem (\rhn):
\begin{itemize}
\item[(a)] $\mathscr{N}$ is a holomorphic matrix function in $D_n$ and $\mathscr{N}(\infty)=\mathscr{I}$;
\item[(b)] $\mathscr{N}$ has continuous traces, $\mathscr{N}_{\pm}$, on $\Delta_n^\circ$ and $\mathscr{N}_+ = \mathscr{N}_- \left(\begin{array}{cc} 0 & w \\ -1/w & 0 \end{array}\right)$;
\end{itemize}
It can be easily checked using \eqref{eq:szfsrpm} that a solution of \rhn~ when $w\equiv1$ is given by
\begin{equation}
\label{eq:nstar}
\mathscr{N_*} = \left(\begin{array}{cc} \displaystyle \szf_{\sr^+}^{-1} & \displaystyle i(\map\szf_{\sr^+})^{-1} \smallskip \\ \displaystyle -i(\map\szf_{\sr^+})^{-1}  & \szf_{\sr^+}^{-1} \end{array}\right).
\end{equation}
Then a solution of \rhn~ for arbitrary $w$ is given by
\begin{equation}
\label{eq:n}
\mathscr{N} = (\gm_w)^{\sigma_3/2}\mathscr{N}_*(\gm_w\szf_w^2)^{-\sigma_3/2}.
\end{equation}

\subsubsection{Auxiliary parametrix near the endpoints} The following construction was introduced in \cite[Thm. 6.3]{KMcLVAV04}. Let $I_\alpha$ and $K_\alpha$ be the modified Bessel functions and $H_\alpha^{(1)}$ and $H_\alpha^{(2)}$ be the Hankel functions \cite[Ch. 9]{AbramowitzStegun}. Set $\Psi$ to be the following sectionally holomorphic matrix function:
\[
\Psi(\zeta) = \Psi(\zeta;\alpha) := \left(
\begin{array}{cc}
I_\alpha\left(2\zeta^{1/2}\right) & \frac i\pi K_\alpha\left(2\zeta^{1/2}\right) \smallskip \\
2\pi i\zeta^{1/2} I_\alpha^\prime\left(2\zeta^{1/2}\right) & -2\zeta^{1/2} K_\alpha^\prime\left(2\zeta^{1/2}\right) 
\end{array}
\right)
\]
for $|\Arg(\zeta)|<2\pi/3$;
\[
\Psi(\zeta) := \left(\begin{array}{cc}
\frac12 H_\alpha^{(1)}\left(2(-\zeta)^{1/2}\right) & \frac12 H_\alpha^{(2)}\left(2(-\zeta)^{1/2}\right) \smallskip \\
\pi\zeta^{1/2}\left(H_\alpha^{(1)}\right)^\prime\left(2(-\zeta)^{1/2}\right) & \pi\zeta^{1/2}\left( H_\alpha^{(2)}\right)^\prime\left(2(-\zeta)^{1/2}\right)
\end{array}
\right)e^{\frac12 \alpha\pi i\sigma_3}
\]
for $2\pi/3<\Arg(\zeta)<\pi$;
\[
\Psi(\zeta) := \left(\begin{array}{cc}
\frac12 H_\alpha^{(2)}\left(2(-\zeta)^{1/2}\right) & -\frac12 H_\alpha^{(1)}\left(2(-\zeta)^{1/2}\right) \smallskip \\
-\pi\zeta^{1/2}\left(H_\alpha^{(2)}\right)^\prime\left(2(-\zeta)^{1/2}\right) & \pi\zeta^{1/2}\left( H_\alpha^{(1)}\right)^\prime\left(2(-\zeta)^{1/2}\right)
\end{array}
\right)e^{-\frac12 \alpha\pi i\sigma_3}
\]
for $-\pi<\Arg(\zeta)<-2\pi/3$, where $\Arg(\zeta)\in(-\pi,\pi]$ is the principal determination of the argument of $\zeta$. Let further $\Sigma_1$, $\Sigma_2$, and $\Sigma_3$ be the rays $\{\zeta:\Arg(\zeta)=2\pi/3\}$, $\{\zeta:\Arg(\zeta)=\pi\}$, and $\{\zeta:\Arg(\zeta)=-2\pi/3\}$, respectively, oriented from infinity to zero. Using known properties of $I_\alpha$, $K_\alpha$, $H_\alpha^{(1)}$, $H_\alpha^{(2)}$, and their derivatives, it can be checked that $\Psi$ is the solution of the following Riemann-Hilbert problem \rhpsi:
\begin{itemize}
\item[(a)] $\Psi$ is a holomorphic matrix function in $\C\setminus(\Sigma_1\cup\Sigma_2\cup\Sigma_3)$;
\item[(b)] $\Psi$ has continuous traces, $\Psi_\pm$, on $\Sigma^\circ_j$, $j\in\{1,2,3\}$, and 
\[
\begin{array}{llll}
\Psi_+ &=& \Psi_- \left(\begin{array}{cc} 1 & 0 \\ e^{\alpha\pi i} & 1 \end{array}\right) & \mbox{on} \quad \Sigma_1^\circ, \smallskip \\
\Psi_+ &=& \Psi_- \left(\begin{array}{cc} 0 & 1 \\ -1 & 0 \end{array}\right) & \mbox{on} \quad \Sigma_2^\circ, \smallskip \\
\Psi_+ &=& \Psi_- \left(\begin{array}{cc} 1 & 0 \\ e^{-\alpha\pi i} & 1 \end{array}\right) & \mbox{on} \quad \Sigma_3^\circ;
\end{array}
\]
\item[(c)] $\Psi$ has the following behavior near $\infty$:
\[
\Psi(\zeta) = \left(2\pi\zeta^{1/2}\right)^{-\sigma_3/2} \frac{1}{\sqrt2}\left(
\begin{array}{cc}
1 + O\left(\zeta^{-1/2}\right) & i + O\left(\zeta^{-1/2}\right) \smallskip \\
i + O\left(\zeta^{-1/2}\right) & 1 + O\left(\zeta^{-1/2}\right)
\end{array}
\right)e^{2\zeta^{1/2}\sigma_3}
\]
uniformly in $\C\setminus(\Sigma_1\cup\Sigma_2\cup\Sigma_3)$;
\item[(d)] For $\alpha<0$, $\Psi$ has the following behavior near $0$:
\[
\Psi = O\left(\begin{array}{cc} |\zeta|^{\alpha/2} & |\zeta|^{\alpha/2} \\ |\zeta|^{\alpha/2} & |\zeta|^{\alpha/2} \end{array}\right) \ \mbox{as} \ \zeta\to0;
\]
For $\alpha=0$, $\Psi$ has the following behavior near $0$:
\[
\Psi = O\left(\begin{array}{cc} \log|\zeta| & \log|\zeta| \\ \log|\zeta| & \log|\zeta| \end{array}\right) \ \mbox{as} \ \zeta\to0;
\]
For $\alpha>0$, $\Psi$ has the following behavior near $0$:
\[
\Psi = \left\{
\begin{array}{ll}
\displaystyle O\left(\begin{array}{cc} |\zeta|^{\alpha/2} & |\zeta|^{-\alpha/2} \\ |\zeta|^{\alpha/2} & |\zeta|^{-\alpha/2} \end{array}\right) & \mbox{as} \ \zeta\to0 \ \mbox{in} \ |\Arg(\zeta)|<2\pi/3, \smallskip \\
\displaystyle O\left(\begin{array}{cc} |\zeta|^{-\alpha/2} & |\zeta|^{-\alpha/2} \\ |\zeta|^{-\alpha/2} & |\zeta|^{-\alpha/2} \end{array}\right) & \mbox{as} \ \zeta\to0 \ \mbox{in} \ 2\pi/3<|\Arg(\zeta)|<\pi.
\end{array}
\right.
\]
\end{itemize}

Further, if we set 
\[
\widetilde \Psi := \sigma_3\Psi(\cdot;\beta)\sigma_3,
\]
then this matrix function satisfies \rhpsi~ with $\alpha$ replaced by $\beta$ and reversed orientation for $\Sigma_j$, $j\in\{1,2,3\}$.

\subsubsection{Parametrix near $1$} As shown in \cite{KMcLVAV04}, the main term of the asymptotics of the solution of \rhb~ near the endpoints of $\Delta_n$ is described by a solution of a special Riemann-Hilbert problem for which $\Psi$ will be instrumental. Let $U_\delta$ be as in the construction of $\Sigma_n$ (see Section~\ref{subsec:lenses}). Below we describe the solution of the following Riemann-Hilbert problem (\rhp):
\begin{itemize}
\item[(a)] $\mathscr{P}$ is a holomorphic matrix function in $U_\delta\setminus\Sigma_n^{md}$;
\item[(b)] $\mathscr{P}$ has continuous boundary values, $\mathscr{P}_\pm$, on $U_\delta\cap(\Sigma_n^{md})^\circ$ and
\[
\begin{array}{llll}
\mathscr{P}_+ &=& \mathscr{P}_- \left(\begin{array}{cc}1&0 \\ r_nc_n\widetilde c/w & 1 \end{array}\right) & \mbox{on} \quad U_\delta\cap(\Delta_{n+}^\circ\cup\Delta_{n-}^\circ), \smallskip \\
\mathscr{P}_+ &=& \mathscr{P}_- \left(\begin{array}{cc} 0 & w \\ -1/w & 0\end{array}\right) & \mbox{on} \quad U_\delta\cap \Delta_n^\circ; 
\end{array}
\]
\item[(c)] $\mathscr{P}\mathscr{N}^{-1}=\mathscr{I}+O(1/n)$ uniformly on $\partial U_\delta$;
\item[(d)] For $\alpha<0$, $\mathscr{P}$ has the following behavior near $z=1$:
\[
\mathscr{P} = O\left(\begin{array}{cc}1&|1-z|^\alpha\\1&|1-z|^\alpha\end{array}\right),  \quad \mbox{as} \quad U_\delta\setminus\Sigma_n^{md}\ni z\to1;
\]
For $\alpha=0$, $\mathscr{P}$ has the following behavior near $z=1$:
\[
\mathscr{P} = O\left(\begin{array}{cc} \log|1-z| & \log|1-z| \\ \log|1-z| & \log|1-z| \end{array}\right), \quad \mbox{as} \quad U_\delta\setminus\Sigma_n^{md}\ni z\to1;
\]
For $\alpha>0$, $\mathscr{P}$ has the following behavior near $z=1$:
\[
\mathscr{P} = \left\{
\begin{array}{ll}
\displaystyle O\left(\begin{array}{cc}1&1\\1&1\end{array}\right), & \mbox{as} \ z\to1 \ \mbox{outside the lens} \ \Sigma_n^{md}, \smallskip \\
\displaystyle O\left(\begin{array}{cc}|1-z|^{-\alpha}&1\\|1-z|^{-\alpha}&1\end{array}\right), & \mbox{as} \ z\to1 \ \mbox{inside the lens} \ \Sigma_n^{md}.
\end{array}
\right.
\]
\end{itemize}

To present a solution of \rhp, we need to introduce more notation. Denote by $U_\delta^+$ and $U_\delta^-$ the subsets of $U_\delta$ that are mapped by $g_n^2$ into the upper and lower half planes, respectively. Without loss of generality we may assume that functions $\theta_n$ are holomorphic in $U_\delta$ and the branch cut of $w$ in $U_\delta$ coincides with the preimage of the positive reals under $g^2_n$. In particular, we have that $w$ is analytic in $U^+_\delta$ and $U_\delta^-$ and therefore across $\Delta_{n\pm}^\circ$. Set
\[
A_n(z) := \frac{\exp\left\{\frac12\left(\theta_n(z)-\sr(z)\ell(z)\right)\right\}}{\sqrt{\gm_{h_n}}\szf_{h_n}(z)} (z-1)^{\alpha/2}(z+1)^{\beta/2},
\]
where we use the same branch of $(z+1)^{\beta/2}$ as in definition of $w$ and a branch of $(z-1)^{\alpha/2}$ analytic in $U_\delta\setminus\Delta_n$ and positive for positive reals large enough. Then
\begin{equation}
\label{eq:An1}
A_n^2 = \left\{
\begin{array}{ll}
e^{\alpha\pi i}w/c_n\widetilde c, & \mbox{in} \quad U_\delta^+, \smallskip \\
e^{-\alpha\pi i}w/c_n\widetilde c, & \mbox{in} \quad U_\delta^-,
\end{array}
\right.
\end{equation}
by the definition of $\widetilde c$ and on account of \eqref{eq:extcn}. Moreover, it readily follows from \eqref{eq:An1} and \eqref{eq:cpmcnpm} that
\begin{equation}
\label{eq:An2}
A^+_nA^-_n = w \quad \mbox{on} \quad \Delta_n^\circ.
\end{equation}
Observe that $g_n^{1/2}$ is a holomorphic function on $U_\delta\setminus \Delta_n$ such that
\begin{equation}
\label{eq:gni}
(g_n^{1/2})^+=i(g_n^{1/2})^- \quad \mbox{on} \quad \Delta_n
\end{equation}
by \eqref{eq:bvgn}. Then the following lemma holds.
\begin{lem}
\label{lem:rhp}
A solution of \rhp~ is given by
\[
\mathscr{P} = \mathscr{E}\Psi\left(\frac{n^2g_n^2}{4}\right)A_n^{-\sigma_3}r_n^{\sigma_3/2}, \quad \mathscr{E} := \mathscr{N}A_n^{\sigma_3} \frac{1}{\sqrt2} \left(\begin{array}{cc} 1 & -i \\ -i & 1 \end{array}\right) \left(\pi ng_n\right)^{\sigma_3/2}.
\]
\end{lem}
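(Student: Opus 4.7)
The plan is to verify the four defining properties \rhp(a)--(d) in order, using the properties of $\Psi$ from \rhpsi, the model parametrix $\mathscr{N}$, and the endpoint behavior of $g_n$, $A_n$, $r_n$ established earlier. The backbone is that $g_n^2\colon U_\delta\to\C$ is conformal onto a convex neighborhood of $0$ and, by the construction of $\Sigma_n^{md}$ in Section~\ref{subsec:lenses}, sends the three arcs $\Delta_{n+}\cap U_\delta$, $\Delta_n\cap U_\delta$, $\Delta_{n-}\cap U_\delta$ onto the rays $\Sigma_1,\Sigma_2,\Sigma_3$ of \rhpsi~ with matching orientations. Hence $\Psi(n^2g_n^2/4)$ is sectionally analytic in $U_\delta\setminus\Sigma_n^{md}$ with jumps inherited from \rhpsi(b), while $A_n^{-\sigma_3}$ and $r_n^{\sigma_3/2}$ are analytic off $\Delta_n$.

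For \rhp(a), the only nontrivial point is that $\mathscr{E}$ extends analytically across $\Delta_n\cap U_\delta$. This follows from $\mathscr{N}_+=\mathscr{N}_-\bigl(\begin{smallmatrix}0&w\\-1/w&0\end{smallmatrix}\bigr)$, \eqref{eq:An2}, \eqref{eq:gni}, and the algebraic identity $\bigl(\begin{smallmatrix}0&1\\-1&0\end{smallmatrix}\bigr)M\,i^{\sigma_3}=M$, which together imply $\mathscr{E}_+=\mathscr{E}_-$. The jump relations of \rhp(b) then arise by composing the jumps of $\Psi$ with those of $A_n^{-\sigma_3}r_n^{\sigma_3/2}$: on $\Delta_n$ one invokes $A_n^+A_n^-=w$ and $(r_n^{1/2})^+(r_n^{1/2})^-=1$ to combine the $\Sigma_2$-jump of $\Psi$ with the $\sigma_3$-conjugation; on $\Delta_{n\pm}$ the factors $e^{\pm\alpha\pi i}$ in \eqref{eq:An1} precisely compensate the matching factors in the jump of $\Psi$ on $\Sigma_{1,3}$.

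The matching \rhp(c) rests on two algebraic facts: $MM_1=\mathscr{I}$ with $M_1:=\tfrac{1}{\sqrt 2}\bigl(\begin{smallmatrix}1&i\\i&1\end{smallmatrix}\bigr)$ the leading matrix in \rhpsi(c), and the exponential identity $r_n=\Phi_n^{-2n}=e^{-2ng_n}$ from \eqref{eq:gnfun}, which yields $r_n^{\sigma_3/2}\,e^{ng_n\sigma_3}=\mathscr{I}$. Substituting the asymptotic of \rhpsi(c) into $\mathscr{P}$ with $\zeta=n^2g_n^2/4$, the diagonal factors $(\pi ng_n)^{\pm\sigma_3/2}$ cancel, $M\cdot M_1$ collapses to $\mathscr{I}$, and $r_n^{\sigma_3/2}$ kills $e^{ng_n\sigma_3}$, leaving
\[
\mathscr{P}\mathscr{N}^{-1}\;=\;\mathscr{I}+\mathscr{N}A_n^{\sigma_3}M\cdot O(1/n)\cdot M^{-1}A_n^{-\sigma_3}\mathscr{N}^{-1}
\]
uniformly on $\partial U_\delta$. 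Since $\mathscr{N}^{\pm 1}$, $A_n^{\pm\sigma_3}$, $M^{\pm 1}$ are bounded uniformly in $n$ on $\partial U_\delta$ (thanks to the normal-family assumption on $\theta_n$ and the explicit form of $\mathscr{N}$), this gives $\mathscr{P}\mathscr{N}^{-1}=\mathscr{I}+O(1/n)$.

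Finally, \rhp(d) combines \rhpsi(d) with $|g_n|\asymp|z-1|^{1/2}$, $|A_n|\asymp|z-1|^{\alpha/2}$, and the holomorphy of $\mathscr{E}$ on all of $U_\delta$ including $1$. I expect this last point to be the main obstacle: from \eqref{eq:nstar} one has $\mathscr{N}_*\sim 2^{-3/4}(z-1)^{-1/4}\bigl(\begin{smallmatrix}1&i\\-i&1\end{smallmatrix}\bigr)$ near $1$, and combined with $(\pi ng_n)^{-\sigma_3/2}$ the second column of $\mathscr{E}$ naively carries a $(z-1)^{-1/2}$ singularity; single-valuedness of $\mathscr{E}$ across $\Delta_n\cap U_\delta$ (established in the previous step) forbids such a genuine branch singularity, so the divergence must cancel. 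The cancellation is realized by the endpoint value $\sqrt{\gm_{h_n}}\szf_{h_n}(1)=e^{\theta_n(1)/2}$ (obtained from $\sr(z)\oc(\theta_n/\sr^+)(z)\to\theta_n(1)/2$ as $z\to 1$, a direct consequence of \eqref{eq:singzero} and the analyticity of $\theta_n$), which absorbs the $e^{\theta_n(1)/2}$ arising in $A_n$ and gives the leading-order match $A_n^2(z)\sim 2^\beta (z-1)^\alpha=\gm_w\,\szf_w^2(z)$ as $z\to 1$, precisely the identity needed to kill the coefficient of $(z-1)^{-1/2}$ in $\mathscr{E}$. Granted $\mathscr{E}\in\hf(U_\delta)$, an entry-by-entry estimate of $\Psi(n^2g_n^2/4)A_n^{-\sigma_3}$ verifies \rhp(d) in each of the three cases $\alpha<0$, $\alpha=0$, $\alpha>0$.
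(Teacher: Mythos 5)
Your proposal is correct and follows essentially the same structure as the paper's proof (verify \rhp(a)--(d) via the properties of $\Psi$, $\mathscr{N}$, $A_n$, $g_n$, $r_n$; the analyticity of $\mathscr{E}$ across $\Delta_n$ via $A_n^+A_n^-=w$, $(g_n^{1/2})^+=i(g_n^{1/2})^-$, and the conjugation identity; the matching on $\partial U_\delta$ from \rhpsi(c) and $r_n^{1/2}=e^{-ng_n}$). The one place where you take a slightly different route is the holomorphy of $\mathscr{E}$ at $z=1$, which you correctly anticipate as the main technical point. The paper's argument there is shorter: it only uses that $(A_n/\szf_w)(1)=2^{-(\alpha+\beta)/2}$ is finite and nonzero to bound every entry of $\mathscr{E}$ by $O(|1-z|^{-1/2})$, then invokes Riemann's removable singularity theorem (a single-valued holomorphic function in a punctured disk that is $o(|z-1|^{-1})$ has a removable singularity). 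Your version goes further and tracks the explicit cancellation: you point out that the second column would naively blow up like $(z-1)^{-1/2}$ unless $\kappa:=A_n/(\gm_w^{1/2}\szf_w)$ satisfies $\kappa(1)^2=1$, and you verify this via $\sqrt{\gm_{h_n}}\szf_{h_n}(1)=e^{\theta_n(1)/2}$, which gives $A_n^2\sim\gm_w\szf_w^2$ near $1$. This is a valid alternative check (and the fact $\kappa(1)^2=1$ is also automatic from $A_n^+A_n^-=w$ and $\szf_w^+\szf_w^-=w/\gm_w$ by continuity at the endpoint), but it is more work than the paper's removable-singularity argument, which does not require knowing that the leading coefficient vanishes --- only that $|\mathscr{E}|$ grows strictly slower than a pole. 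Your organizational placement is also slightly off: you call the analytic continuation across $\Delta_n$ "the only nontrivial point" of \rhp(a) but then address the removability of the singularity at $1$ under \rhp(d); the paper treats both under the analyticity of $\mathscr{E}$ before discussing (b)--(d). These are stylistic points, not gaps.
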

\begin{proof} Except for some technical differences, the proof is analogous to the considerations in \cite[eqn. (6.27) and after]{KMcLVAV04}. First, we must show that $\mathscr{E}$ is holomorphic in $U_\delta$. This is clearly true in $U_\delta\setminus \Delta_n$. It is also clear that $\mathscr{E}$ has continuous boundary values on each side of $\Delta_n^\circ$. Since
\begin{eqnarray}
\mathscr{E}_+ &=& \mathscr{N}_+\left(A_n^+\right)^{\sigma_3} \frac{1}{\sqrt2} \left(\begin{array}{cc} 1 & -i \\ -i & 1 \end{array}\right) (\pi n)^{\sigma_3/2}\left(\left(g_n^{1/2}\right)^+\right)^{\sigma_3} \nonumber \\
{} &=& \mathscr{N}_- \left(\begin{array}{cc} 0 & w \\ -1/w & 0 \end{array}\right) \left(\frac{w}{A_n^-}\right)^{\sigma_3} \frac{1}{\sqrt2} \left(\begin{array}{cc} 1 & -i \\ -i & 1 \end{array}\right) (\pi n)^{\sigma_3/2} \left(\begin{array}{cc} i & 0 \\ 0 & -i \end{array}\right) \left(\left(g_n^{1/2}\right)^-\right)^{\sigma_3} \nonumber \\
{} &=& \mathscr{N}_-\left(A_n^-\right)^{\sigma_3} \left(\begin{array}{cc} 0 & 1 \\ -1 & 0 \end{array}\right) \frac{1}{\sqrt2} \left(\begin{array}{cc} i & -1 \\ 1 & -i \end{array}\right) (\pi n)^{\sigma_3/2}  \left(\left(g_n^{1/2}\right)^-\right)^{\sigma_3} = \mathscr{E}_-, \nonumber 
\end{eqnarray}
where we used \rhn(b), \eqref{eq:An2}, and \eqref{eq:gni}, $\mathscr{E}$ is holomorphic across $\Delta_n^\circ$. Thus, it remains to show that $\mathscr{E}$ has no singularity at 1. For this observe that
\[
\left(g_n^{1/2}(z)\right)^{\sigma_3} = O\left(\begin{array}{cc} |1-z|^{1/4} & 0 \\ 0 & |1-z|^{-1/4} \end{array}\right), \quad \mbox{as} \quad z\to1,
\]
since $g_n^2$ has a simple zero at 1. Furthermore, by the very definition it holds that
\[
\mathscr{N}_* = O\left(\begin{array}{cc} |1-z|^{-1/4} & |1-z|^{-1/4} \\ |1-z|^{-1/4} & |1-z|^{-1/4} \end{array}\right), \quad \mbox{as} \quad z\to1.
\]
Finally, $\displaystyle (A_n/\szf_w)(z)\to 2^{-(\alpha+\beta)/2}$ as $z\to1$ by \eqref{eq:szfw}. Hence, the entries of $\mathscr{E}$ can have at most square-root singularity at 1, which is impossible since $\mathscr{E}$ is analytic in $U_\delta\setminus\{1\}$, and therefore $\mathscr{E}$ is analytic in the whole disk $U_\delta$.

The analyticity of $\mathscr{E}$ implies that the jumps of $\mathscr{P}$ are those of $\Psi\left(n^2g_n^2/4\right)A_n^{-\sigma_3}r_n^{\sigma_3/2}$. Clearly, the latter has jumps on $\Sigma_n^{md}\cap U_\delta$ by the very definition of $g_n^2$ and $\Psi$. Moreover, it is a routine exercise, using \rhpsi(b) and \eqref{eq:An1}, to verify that these jumps are described exactly by \rhp(b). It is also clear that \rhp(a) is satisfied. Further, we get directly from \rhpsi(c) that the behavior of $\Psi\left(n^2g_n^2/4\right)$ on $\partial U_\delta$ can be described by
\[
\Psi\left(\frac{n^2g_n^2}{4}\right) = (\pi ng_n)^{-\sigma_3/2} \left(
\begin{array}{cc}
\displaystyle 1 + O\left(1/n\right) & \displaystyle i + O\left(1/n\right) \smallskip \\
\displaystyle i + O\left(1/n\right) & \displaystyle 1 + O\left(1/n\right)
\end{array}
\right) r_n^{-\sigma_3/2},
\]
where the property $O(1/n)$ holds uniformly on $\partial U_\delta$. Hence, using that the diagonal matrices $A_n^{-\sigma_3}$ and $r_n^{\sigma_3/2}$ commute, we get that
\begin{eqnarray}
\mathscr{P}\mathscr{N}^{-1} &=& \mathscr{E}\left(\pi ng_n\right)^{-\sigma_3/2}\left(
\begin{array}{cc}
\displaystyle 1 + O\left(1/n\right) & \displaystyle i + O\left(1/n\right) \smallskip \\
\displaystyle i + O\left(1/n\right) & \displaystyle 1 + O\left(1/n\right)
\end{array}
\right) A_n^{-\sigma_3}\mathscr{N}^{-1} \nonumber \\
{} &=& \mathscr{N}A_n^{\sigma_3} \left(\mathscr{I}+O\left(1/n\right)\right)A_n^{-\sigma_3}\mathscr{N}^{-1} = \mathscr{I}+O\left(1/n\right) \nonumber
\end{eqnarray}
since the moduli of all the entries of $\mathscr{N}A_n^{\sigma_3}$ are uniformly bounded above and away from zero on $\partial U_\delta$. Thus, \rhp(c)~ holds. Finally, \rhp(d)~ follows immediately from \rhpsi(d)~ upon recalling that $|g_n^2(z)|=O(|1-z|)$ and $|A_n(z)|\sim|1-z|^{\alpha/2}$ as $z\to1$.
\end{proof}

\subsubsection{Parametrix near $-1$} In this section we describe the solution of the Riemann-Hilbert problem that plays the same role with respect to $-1$ as \rhp~ did for 1. Below we describe the solution of the following Riemann-Hilbert problem  (\rhpt):
\begin{itemize}
\item[(a)] $\mathscr{\widetilde P}$ is a holomorphic matrix function in $\widetilde U_\delta\setminus\Sigma_n^{md}$;
\item[(b)] $\mathscr{\widetilde P}$ has continuous boundary values, $\mathscr{\widetilde P}_\pm$, on $\widetilde U_\delta\cap(\Sigma_n^{md})^\circ$ and
\[
\begin{array}{llll}
\mathscr{\widetilde P}_+ &=& \mathscr{\widetilde P}_- \left(\begin{array}{cc}1&0 \\ r_nc_n\widetilde c/w & 1 \end{array}\right) & \mbox{on} \quad \widetilde U_\delta\cap(\Delta_{n+}^\circ\cup\Delta_{n-}^\circ), \smallskip \\
\mathscr{\widetilde P}_+ &=& \mathscr{\widetilde P}_- \left(\begin{array}{cc} 0 & w \\ -1/w & 0\end{array}\right) & \mbox{on} \quad \widetilde U_\delta\cap \Delta_n^\circ; 
\end{array}
\]
\item[(c)] $\mathscr{\widetilde P}\mathscr{N}^{-1}=\mathscr{I}+O(1/n)$ uniformly on $\partial \widetilde U_\delta$;
\item[(d)] For $\beta<0$, $\mathscr{\widetilde P}$ has the following behavior near $z=-1$:
\[
\mathscr{\widetilde P} = O\left(\begin{array}{cc}1&|1+z|^\beta \\ 1 & |1+z|^\beta \end{array}\right),  \quad \mbox{as} \quad \widetilde U_\delta\setminus\Sigma_n^{md}\ni z\to-1;
\]
For $\beta=0$, $\mathscr{\widetilde P}$ has the following behavior near $z=-1$:
\[
\mathscr{\widetilde P} = O\left(\begin{array}{cc} \log|1+z| & \log|1+z| \\ \log|1+z| & \log|1+z| \end{array}\right), \quad \mbox{as} \quad \widetilde U_\delta\setminus\Sigma_n^{md}\ni z\to-1;
\]
For $\beta>0$, $\mathscr{\widetilde P}$ has the following behavior near $z=-1$:
\[
\mathscr{\widetilde P} = \left\{
\begin{array}{ll}
\displaystyle O\left(\begin{array}{cc}1&1\\1&1\end{array}\right), & \mbox{as} \ z\to-1 \ \mbox{outside the lens} \ \Sigma_n^{md}, \smallskip \\
\displaystyle O\left(\begin{array}{cc}|1+z|^{-\beta} & 1 \\ |1+z|^{-\beta} & 1\end{array}\right), & \mbox{as} \ z\to-1 \ \mbox{inside the lens} \ \Sigma_n^{md}.
\end{array}
\right.
\]
\end{itemize}

This problem is solved exactly in the same manner as  \rhp. Thus, we set
\[
\widetilde A_n(z) := \frac{\exp\left\{\frac12\left(\theta_n(z)-\sr(z)\ell(z)\right)\right\}}{\sqrt{\gm_{h_n}}\szf_{h_n}(z)}(1-z)^{\alpha/2}(-1-z)^{\beta/2},
\]
where the branch of $(1-z)^{\alpha/2}$ is the same as the corresponding one in $w$, and $(-1-z)^{\beta/2}$ is holomorphic in $\widetilde U_\delta\setminus\Delta_n$ and positive for negative real large enough. As in \eqref{eq:An1}, we have that
\[
\widetilde A_n^2 = \left\{
\begin{array}{ll}
e^{\beta\pi i}w/c_n\widetilde c, & \mbox{in} \quad \widetilde U_\delta^+, \smallskip \\
e^{-\beta\pi i}w/c_n\widetilde c, & \mbox{in} \quad \widetilde U_\delta^-,
\end{array}
\right.
\]
where $\widetilde U_\delta^\pm$ have the same meaning as in the previous section. However, here one needs to be cautious since $\widetilde g_n$ reverses the orientation on $\Sigma_2$, i.e. $\Sigma_2$ is now oriented from zero to infinity, and therefore $\widetilde U_\delta^+$ is mapped into $\{z:~\im(z)<0\}$ and $\widetilde U_\delta^-$ into $\{z:~\im(z)>0\}$. Again, it can be checked that
\[
\widetilde A^+_n\widetilde A^-_n = w \quad \mbox{on} \quad \Delta_n^\circ.
\]
The following lemma can be proven exactly as Lemma \ref{lem:rhp} using that $(-1)^nr_n^{1/2}=e^{n\widetilde g_n}$.
\begin{lem}
\label{lem:rhpt}
The solution of \rhpt~ is given by
\[
\mathscr{\widetilde P} = \mathscr{\widetilde E}\widetilde \Psi\left(\frac{n^2\widetilde g_n^2}{4}\right)\widetilde A_n^{-\sigma_3}(-1)^{n\sigma_3}r_n^{\sigma_3/2}, \ \mathscr{\widetilde E} := \mathscr{N}\widetilde A_n^{\sigma_3} \frac{1}{\sqrt2} \left(\begin{array}{cc} 1 & i \\ i & 1 \end{array}\right) \left(\pi n\widetilde g_n\right)^{\sigma_3/2}.
\]
\end{lem}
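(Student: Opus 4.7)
The proof closely follows the pattern of Lemma~\ref{lem:rhp}, and I would carry it out by three checks with the modifications dictated by $\widetilde g_n$ being based at $-1$ rather than at $1$ and by $\widetilde g_n^2$ reversing the orientation of the middle ray $\Sigma_2$.

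First I would verify that $\mathscr{\widetilde E}$ extends holomorphically to all of $\widetilde U_\delta$. Outside of $\Delta_n$ the factors $\mathscr{N}$, $\widetilde A_n^{\sigma_3}$ and $(\pi n\widetilde g_n)^{\sigma_3/2}$ are manifestly analytic, so only the jump across $\Delta_n\cap \widetilde U_\delta$ and the point $-1$ require attention. Using \rhn(b), the identity $\widetilde A_n^+\widetilde A_n^-=w$, and $(\widetilde g_n^{1/2})^+=i(\widetilde g_n^{1/2})^-$ (which follows from \eqref{eq:bvgn}), the computation of $\mathscr{\widetilde E}_+\mathscr{\widetilde E}_-^{-1}$ is structurally identical to that in the proof of Lemma~\ref{lem:rhp}; the replacement of the constant matrix $\tfrac{1}{\sqrt 2}\bigl(\begin{smallmatrix}1 & -i \\ -i & 1\end{smallmatrix}\bigr)$ by $\tfrac{1}{\sqrt 2}\bigl(\begin{smallmatrix}1 & i \\ i & 1\end{smallmatrix}\bigr)$ in the definition of $\mathscr{\widetilde E}$ is precisely what absorbs the sign reversal coming from the opposite orientation of $\Sigma_2$. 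Removability at $-1$ then follows from the same order-of-magnitude count as at $1$: $\widetilde g_n^{1/2}$ contributes at most $|1+z|^{1/4}$, the entries of $\mathscr{N}_*$ at most $|1+z|^{-1/4}$, and $\widetilde A_n/\szf_w\to 2^{-(\alpha+\beta)/2}$ at $-1$ by \eqref{eq:szfw}, so any singularity of $\mathscr{\widetilde E}$ at $-1$ is at worst of order $|1+z|^{-1/2}$, which for a function analytic in a punctured disk is necessarily removable.

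Second, since $\mathscr{\widetilde E}$ is now analytic on all of $\widetilde U_\delta$, the jumps of $\mathscr{\widetilde P}$ on $\widetilde U_\delta\cap\Sigma_n^{md}$ coincide with those of $\widetilde\Psi(n^2\widetilde g_n^2/4)\,\widetilde A_n^{-\sigma_3}(-1)^{n\sigma_3}r_n^{\sigma_3/2}$. Writing $\widetilde\Psi=\sigma_3\Psi(\cdot;\beta)\sigma_3$, invoking \rhpsi(b) for $\Psi(\cdot;\beta)$ (with the caveat that the rays attached to $\widetilde g_n^2$ are oriented oppositely, so $\widetilde U_\delta^+$ and $\widetilde U_\delta^-$ are mapped into the lower and upper half planes respectively), and using the piecewise formula for $\widetilde A_n^2$, one recovers precisely the jump matrices listed in \rhpt(b); the endpoint estimate \rhpt(d) drops out of \rhpsi(d) applied to $\Psi(\cdot;\beta)$ together with $|\widetilde g_n^2(z)|=O(|1+z|)$ and $|\widetilde A_n(z)|\asymp|1+z|^{\beta/2}$ as $z\to -1$.

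The third and most delicate step is the matching condition \rhpt(c). From \rhpsi(c) applied to $\Psi(\cdot;\beta)$,
\[
\widetilde\Psi\!\left(\frac{n^2\widetilde g_n^2}{4}\right) = (\pi n\widetilde g_n)^{-\sigma_3/2}\tfrac{1}{\sqrt 2}\bigl(\begin{smallmatrix}1 & -i \\ -i & 1\end{smallmatrix}\bigr)(\mathscr{I}+O(1/n))\,e^{n\widetilde g_n\sigma_3}
\]
uniformly on $\partial\widetilde U_\delta$. The identity $(-1)^nr_n^{1/2}=e^{n\widetilde g_n}$, noted in the text and which reduces via $\widetilde g_n=g_n-\pi i$ to the analogous relation used in the proof of Lemma~\ref{lem:rhp}, collapses the trailing block $e^{n\widetilde g_n\sigma_3}(-1)^{n\sigma_3}r_n^{\sigma_3/2}$ to the identity. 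Multiplying on the left by $\mathscr{\widetilde E}$ and on the right by $\mathscr{N}^{-1}$ then gives
\[
\mathscr{\widetilde P}\mathscr{N}^{-1} = \mathscr{N}\widetilde A_n^{\sigma_3}(\mathscr{I}+O(1/n))\widetilde A_n^{-\sigma_3}\mathscr{N}^{-1} = \mathscr{I}+O(1/n),
\]
since the entries of $\mathscr{N}\widetilde A_n^{\sigma_3}$ are uniformly bounded above and bounded away from zero on $\partial\widetilde U_\delta$. The main obstacle in transcribing the argument from Lemma~\ref{lem:rhp} is the sign bookkeeping at each of the three stages — the jump cancellation inside $\mathscr{\widetilde E}$, the half-plane interchange in the piecewise formula for $\widetilde A_n^2$, and the extra $(-1)^n$ coming from the $e^{-n\pi i}$ shift between $\widetilde g_n$ and $g_n$ — and once these signs are absorbed by the modified matrix constant in $\mathscr{\widetilde E}$ and by the $(-1)^{n\sigma_3}$ factor in the statement, the remainder is a verbatim repetition.
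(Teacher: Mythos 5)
Your proposal is correct in structure and follows essentially the same route as the paper, which itself disposes of this lemma by delegating to the proof of Lemma~\ref{lem:rhp} with the adjustments you describe (the conjugation by $\sigma_3$ in $\widetilde\Psi$, the replaced constant matrix in $\mathscr{\widetilde E}$, the orientation flip on $\Sigma_2$, and the base point $-1$). One small caveat: the identity $(-1)^nr_n^{1/2}=e^{n\widetilde g_n}$ that you quote from the text does not, taken literally, collapse $e^{n\widetilde g_n\sigma_3}(-1)^{n\sigma_3}r_n^{\sigma_3/2}$ to $\mathscr{I}$ (it would give $r_n^{\sigma_3}$); since $\Phi_n=r_n^{-1/2n}$ and $g_n=\log\Phi_n$ one has $e^{ng_n}=r_n^{-1/2}$, hence $e^{n\widetilde g_n}=(-1)^n r_n^{-1/2}$, which is evidently a sign misprint in the paper and with which the cancellation you invoke does hold, so the argument goes through as you describe.
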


Finally, we are prepared to solve \rha.

\subsection{Solution of \rha}

Denote by $\Sigma^{rd}_n$ the reduced system of contours obtained from $\Sigma_n^{md}$ by removing $\Delta_n$ and $\Delta_{n\pm}\cap(U_\delta\cup\widetilde U_\delta)$ and adding $\partial U_\delta \cup \partial \widetilde U_\delta$. For this new system of contours we consider the following Riemann-Hilbert problem (\rhr):
\begin{itemize}
\item[(a)] $\mathscr{R}$ is a holomorphic matrix function in $\overline\C\setminus\Sigma_n^{rd}$ and $\mathscr{R}(\infty)=\mathscr{I}$;
\item[(b)] the traces of $\mathscr{R}$, $\mathscr{R}_\pm$, are continuous on $\Sigma^{rd}_n$ except for the branching points of $\Sigma^{rd}_n$, where they have definite limits from each sector and along each branch of $\Sigma_n^{rd}$. Moreover, $\mathscr{R}_\pm$ satisfy 
\[
\mathscr{R}_+  = \mathscr{R}_- \left\{
\begin{array}{ll}
\mathscr{P}\mathscr{N}^{-1} & \mbox{on} \quad \partial U_\delta, \smallskip \\
\mathscr{\widetilde P}\mathscr{N}^{-1} & \mbox{on} \quad \partial \widetilde U_\delta \smallskip \\
\mathscr{N} \left(\begin{array}{cc} 1 & 0 \\ r_nc_n\widetilde c/w & 1 \end{array}\right) \mathscr{N}^{-1} & \mbox{on} \quad \Sigma_n^{rd}\setminus(\partial U_\delta \cup \partial \widetilde U_\delta).
\end{array}
\right.
\]
\end{itemize}
Then the following lemma takes place.
\begin{lem}
\label{lem:rhr}
The solution of \rhr~ exists for all $n$ large enough and satisfies
\begin{equation}
\label{eq:r}
\mathscr{R}=\mathscr{I}+O\left(1/n\right),
\end{equation}
where $O(1/n)$ holds uniformly in $\overline\C$. Moreover, $\det(\mathscr{R})=1$.
\end{lem}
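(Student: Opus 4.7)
The plan is to treat \rhr\ as a small-norm Riemann-Hilbert problem on $\Sigma_n^{rd}$ and solve it by a Neumann series argument. Write $V_n$ for the jump matrix in \rhr(b). The first task is to show that $V_n-\mathscr{I}$ is uniformly small as $n\to\infty$. On the circles $\partial U_\delta\cup\partial\widetilde U_\delta$ this is precisely the matching conditions \rhp(c) and \rhpt(c), which give $\|V_n-\mathscr{I}\|_\infty = O(1/n)$. On the remaining portion of $\Sigma_n^{rd}$, namely the arcs $\Delta_{n\pm}\setminus(\overline U_\delta\cup\overline{\widetilde U}_\delta)$ which are at positive distance from $\pm1$, the matrix $\mathscr{N}$ and its inverse are uniformly bounded (since $\szf_w$ and $\szf_{\sr^+}$ are bounded and bounded away from zero away from $\pm 1$), so $V_n-\mathscr{I}$ is of the same order as $r_nc_n\widetilde c/w$ there. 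By the construction of $\Sigma_n^{md}$, $\Delta_{n\pm}$ is, near $\pm1$, the preimage under $g_n^2$ (respectively $\widetilde g_n^2$) of a ray of argument $\pm 2\pi/3$, so $\re(g_n)$ is bounded below by a positive constant on the part outside $U_\delta\cup\widetilde U_\delta$; combined with $r_n=e^{-2ng_n}$ and the locally uniform convergence $g_n\to g$ from Section~\ref{subsec:gn}, this yields $|r_n|\leq Ce^{-cn}$ on this set. Hence $V_n-\mathscr{I}$ is exponentially small there.

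Next I would invoke the standard small-norm Riemann-Hilbert theory: \rhr\ is equivalent to the singular integral equation
\[
\mathscr{R}_- = \mathscr{I} + \oc^-_{\Sigma_n^{rd}}\!\bigl[\mathscr{R}_-(V_n-\mathscr{I})\bigr]
\]
on $\Sigma_n^{rd}$, where $\oc^-_{\Sigma_n^{rd}}$ is the boundary Cauchy operator from the minus side. The contours $\Sigma_n^{rd}$ are piecewise smooth and, via the parametrizations granted by Theorem~\ref{thm:sp}, converge uniformly to a fixed limiting contour; therefore the operator norm of $\oc^-_{\Sigma_n^{rd}}$ on $L^2(\Sigma_n^{rd})$ is bounded independently of $n$. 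Multiplication by $V_n-\mathscr{I}$ has norm $O(1/n)$, so by Neumann series the integral equation is uniquely solvable in $\mathscr{I}+L^2$ for all $n$ large enough, with $\|\mathscr{R}_--\mathscr{I}\|_2=O(1/n)$. Reconstructing $\mathscr{R}$ off the contour by the Cauchy integral
\[
\mathscr{R}(z) = \mathscr{I} + \frac{1}{2\pi i}\int_{\Sigma_n^{rd}}\frac{\mathscr{R}_-(s)(V_n(s)-\mathscr{I})}{s-z}\,ds
\]
and estimating via Cauchy-Schwarz together with the $L^\infty$ bound on $V_n-\mathscr{I}$ yields $\mathscr{R}=\mathscr{I}+O(1/n)$ uniformly in $\overline\C$ as claimed.

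For the determinant identity, note that each piece of $V_n$ is unimodular: the triangular block on the lens lips obviously has $\det=1$, and $\det(\mathscr{P}\mathscr{N}^{-1})=\det(\widetilde{\mathscr{P}}\mathscr{N}^{-1})=1$ on $\partial U_\delta\cup\partial\widetilde U_\delta$ because the jump matrices in \rhn, \rhp, \rhpt, and \rhpsi\ all have determinant one, whence their determinants are scalar functions without jumps that are bounded near $\pm 1$ and take the value $1$ at the relevant reference point (infinity for $\mathscr{N}$, the matching circle for $\mathscr{P}\mathscr{N}^{-1}$ via condition (c)), hence identically $1$. Therefore $\det(\mathscr{R})$ is continuous across $\Sigma_n^{rd}$, holomorphic off the contour with removable singularities at the finitely many intersection points (where $\mathscr{R}$ is bounded by construction), and equals $1$ at infinity; Liouville yields $\det(\mathscr{R})\equiv 1$.

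The subtle step is the uniform-in-$n$ $L^2$ bound for $\oc^-_{\Sigma_n^{rd}}$, since the contour itself varies with $n$. One way around this is to pull everything back to a fixed reference contour using the analytic parametrizations $\Xi_n$ from Theorem~\ref{thm:sp}, whose uniform convergence to $\Xi$ turns the $n$-dependent boundedness into a standard Cauchy-integral estimate on a single Lipschitz curve; alternatively, one may cite the David-Semmes theorem directly, noting that the arcs $\Sigma_n^{rd}$ are uniformly chord-arc with constants controlled independently of~$n$. All remaining ingredients are standard once these pieces are in place.
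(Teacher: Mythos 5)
Your proof follows the same overall strategy as the paper's — a small-norm Riemann--Hilbert argument, with the jump $O(1/n)$ on $\partial U_\delta\cup\partial\widetilde U_\delta$ and geometrically small on the remaining lips, and an essentially identical Liouville-type argument for $\det\mathscr{R}\equiv1$ — but the technical handling of the $n$-dependent contour and of the uniformity of \eqref{eq:r} differs in ways worth comparing.

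Where the paper and you part ways is on the uniform-in-$n$ boundedness of the Cauchy operator. The paper exploits that the jump on $\Sigma_n^{rd}\setminus(\partial U_\delta\cup\partial\widetilde U_\delta)$ is \emph{analytic} (it is $\mathscr{N}(\mathscr{I}+\cdot)\mathscr{N}^{-1}$ with $r_n c_n\widetilde c/w$ analytic in a neighborhood of the lips), so it literally deforms the whole problem to a single fixed contour $\Sigma^{rd}$ and then cites \cite[Cor.\ 7.108]{Deift} once. This sidesteps any discussion of operator norms on varying contours. Your proposal instead suggests either pulling back via the parametrizations $\Xi_n$ or invoking David--Semmes for uniformly chord-arc families; both are valid but heavier, and the pull-back route needs some care because conjugating $\oc^-_{\Sigma_n^{rd}}$ by a change of variable does not give the Cauchy operator on the reference contour on the nose — it perturbs the kernel. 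Either way, the paper's deformation trick is cleaner and gets the uniform bound essentially for free.

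There is one genuine gap in your write-up: the claim that Cauchy--Schwarz applied to
\[
\mathscr{R}(z) = \mathscr{I} + \frac{1}{2\pi i}\int_{\Sigma_n^{rd}}\frac{\mathscr{R}_-(s)(V_n(s)-\mathscr{I})}{s-z}\,ds
\]
yields $\mathscr{R}=\mathscr{I}+O(1/n)$ \emph{uniformly in $\overline\C$}. The factor $\bigl(\int |s-z|^{-2}|ds|\bigr)^{1/2}$ blows up as $z$ approaches the contour, so this gives only a locally uniform estimate away from $\Sigma_n^{rd}$, not the stated uniform bound on $\overline\C$. The paper explicitly flags and fixes this: for $z\in\Sigma^{rd}$ it deforms once more to a nearby contour avoiding $z$ (again using analyticity of the jump), obtaining \eqref{eq:r} in a neighborhood of $z$, and then invokes compactness of $\Sigma^{rd}$. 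You would need to add a similar secondary deformation (or a Sokhotski--Plemelj estimate for the boundary values) to close the argument; as written, the uniformity up to the contour is asserted but not proved.
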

\begin{proof}
By \rhp(c) and \rhpt(c), we have that \rhr(b) can be written as
\begin{equation}
\label{eq:smalljump}
\mathscr{R}_+ = \mathscr{R_-}\left(\mathscr{I}+O\left(1/n\right)\right)
\end{equation}
uniformly on $\partial U_\delta\cup\partial\widetilde U_\delta$. Further, since the jump of $\mathscr{R}$ on $\Sigma_n^{rd}\setminus(\partial U_\delta \cup \partial \widetilde U_\delta)$ is analytic, it allows us to deform the problem \rhr~ to a fixed contour, say $\Sigma^{rd}$, obtained from $\Sigma$ like $\Sigma_n^{rd}$ was obtained from $\Sigma_n^{md}$  (the solutions exist, are simultaneously unique, and can be easily expressed through each other as in \eqref{eq:a}). Moreover, by the properties of $r_n$, the jump of $\mathscr{R}$ on $\Sigma^{rd}\setminus(\partial U_\delta \cup \partial \widetilde U_\delta)$ is geometrically uniformly close to $\mathscr{I}$. Hence, \eqref{eq:smalljump} holds uniformly on $\Sigma^{rd}$. Thus, by \cite[Cor. 7.108]{Deift}, \rhr~ is solvable for all $n$ large enough and $\mathscr{R}_\pm$ converge to zero on $\Sigma^{rd}$ in $L^2$-sense as fast as $1/n$. The latter yields \eqref{eq:r} locally uniformly in $\overline\C\setminus\Sigma^{rd}$. To show that \eqref{eq:r} holds at $z\in\Sigma^{rd}$, deform $\Sigma^{rd}$ to a  new contour that avoids $z$ (by making $\delta$ smaller or chosing different arcs to connect $U_\delta$ and $\widetilde U_\delta$). As the jump in \rhr~ is given by analytic matrix functions, one can state an equivalent problem on this new contour, the solution to which is an analytic continuation of $\mathscr{R}$. However, now we have that \eqref{eq:r} holds locally around $z$. Compactness of $\Sigma^{rd}$ finishes the proof of \eqref{eq:r}.

Finally, as $\mathscr{N}$, $\mathscr{P}$, and $\mathscr{\widetilde P}$ have determinants equal to 1 throughout $\C$ \cite[Rem. 7.1]{KMcLVAV04}, $\det\mathscr{R}$ is an analytic function in $\overline\C\setminus\Sigma^{rd}$ that is equal to 1 at infinity, has equal boundary values on each side of $\Sigma^{rd}\setminus\{\mbox{branching points}\}$, and is bounded near the branching points. Thus, $\det(\mathscr{R})\equiv1$.
\end{proof}

Finally, we provide the solution of \rha.
\begin{lem}
\label{lem:rhar}
The solution of \rha~  exists for all $n$ large enough and is given by \eqref{eq:a} with
\begin{equation}
\label{eq:b}
\mathscr{B} := \left\{
\begin{array}{ll}
\mathscr{R}\mathscr{N}, & \mbox{in} \quad \overline\C\setminus(\overline U_\delta \cup \overline{\widetilde U_\delta} \cup \Sigma_n), \smallskip \\
\mathscr{R}\mathscr{P}, & \mbox{in} \quad  U_\delta, \smallskip \\
\mathscr{R}\mathscr{\widetilde P}, & \mbox{in} \quad \widetilde U_\delta,
\end{array}
\right.
\end{equation}
where $\mathscr{R}$ is the solution of \rhr.  Moreover, $\det\mathscr{A}\equiv1$.
\end{lem}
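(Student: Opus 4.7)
The plan is to verify directly that $\mathscr{B}$ defined by \eqref{eq:b} solves \rhb, then invoke Lemma~\ref{lem:rhab} to produce $\mathscr{A}$ via \eqref{eq:a}. Since $\mathscr{R}$ is holomorphic in $\overline\C\setminus\Sigma_n^{rd}$ (Lemma~\ref{lem:rhr}), $\mathscr{N}$ is holomorphic in $D_n$, and $\mathscr{P}$, $\mathscr{\widetilde P}$ are holomorphic in $U_\delta\setminus\Sigma_n^{md}$ and $\widetilde U_\delta\setminus\Sigma_n^{md}$ respectively, the piecewise definition \eqref{eq:b} yields a matrix that is holomorphic off $\Sigma_n^{md}\cup\partial U_\delta\cup\partial\widetilde U_\delta$. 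The normalization $\mathscr{B}(\infty)=\mathscr{I}$ is immediate from $\mathscr{R}(\infty)=\mathscr{I}$ and $\mathscr{N}(\infty)=\mathscr{I}$.

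The substance of the argument consists in checking that the circles $\partial U_\delta$ and $\partial \widetilde U_\delta$ are \emph{not} actually jump contours for $\mathscr{B}$, and that the jump on $\Sigma_n^{md}$ is exactly the one prescribed in \rhb(b). With the standard counterclockwise orientation of the disks, the boundary value of $\mathscr{B}$ from outside $U_\delta$ equals $\mathscr{R}_+\mathscr{N}$, while from inside it equals $\mathscr{R}_-\mathscr{P}$; these agree precisely because \rhr(b) says $\mathscr{R}_+=\mathscr{R}_-\mathscr{P}\mathscr{N}^{-1}$. The analogous computation works on $\partial\widetilde U_\delta$. For the jumps on $\Sigma_n^{md}$, I split into cases. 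On the portions of $\Delta_{n\pm}^\circ$ and $\Delta_n^\circ$ inside $U_\delta$ or $\widetilde U_\delta$, $\mathscr{R}$ is analytic so the jump of $\mathscr{B}$ is inherited from $\mathscr{P}$ or $\mathscr{\widetilde P}$, which is exactly the one in \rhp(b) / \rhpt(b); these are the required jumps. On the portions of $\Delta_{n\pm}^\circ$ outside the two disks, $\mathscr{N}$ is analytic across the lens lips, and by \rhr(b) the jump of $\mathscr{R}$ there is $\mathscr{N}\bigl(\begin{smallmatrix}1&0\\r_nc_n\widetilde c/w&1\end{smallmatrix}\bigr)\mathscr{N}^{-1}$, so
\[
\mathscr{B}_+=\mathscr{R}_+\mathscr{N}=\mathscr{R}_-\mathscr{N}\begin{pmatrix}1&0\\r_nc_n\widetilde c/w&1\end{pmatrix}=\mathscr{B}_-\begin{pmatrix}1&0\\r_nc_n\widetilde c/w&1\end{pmatrix},
\]
as required. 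On $\Delta_n^\circ$ outside the disks, $\mathscr{R}$ is analytic and $\mathscr{N}$ carries the jump $\bigl(\begin{smallmatrix}0&w\\-1/w&0\end{smallmatrix}\bigr)$ from \rhn(b).

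The endpoint behavior \rhb(c)--(d) is inherited from that of $\mathscr{P}$ and $\mathscr{\widetilde P}$ (given in \rhp(d), \rhpt(d)), since $\mathscr{R}$ is bounded throughout $\overline\C$ by Lemma~\ref{lem:rhr}; the uniform estimate \eqref{eq:r} ensures $\mathscr{R}$ is bounded in particular near $\pm 1$. Thus $\mathscr{B}$ solves \rhb, and consequently $\mathscr{A}$ defined by \eqref{eq:a} solves \rha~ for all $n$ large enough by Lemma~\ref{lem:rhab}.

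Finally, for the determinant, observe that $\det\mathscr{N}\equiv 1$, $\det\mathscr{P}\equiv 1$, $\det\mathscr{\widetilde P}\equiv 1$ throughout $\overline\C$ (cf.\ \cite[Rem.~7.1]{KMcLVAV04}), and $\det\mathscr{R}\equiv 1$ by Lemma~\ref{lem:rhr}. Hence $\det\mathscr{B}\equiv 1$, and since the multiplicative matrices appearing in the piecewise definition \eqref{eq:a} all have determinant $1$ (indeed $\det\bigl(\begin{smallmatrix}0&\pm w\\ \mp 1/w&0\end{smallmatrix}\bigr)=1$), we conclude that $\det\mathscr{A}\equiv 1$ as well. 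The only non-routine part of the whole argument is the orientation bookkeeping on $\partial U_\delta$ and $\partial\widetilde U_\delta$, but this just amounts to reading \rhr(b) in the correct direction.
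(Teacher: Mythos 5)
Your verification is correct and fills in the details that the paper leaves as ``it can be easily checked,'' so the approach is essentially the same as the paper's (explicit check that $\mathscr{B}$ solves \rhb, then invoke Lemma~\ref{lem:rhab}, then the determinant argument). One small slip: for the relation $\mathscr{R}_+=\mathscr{R}_-\,\mathscr{P}\mathscr{N}^{-1}$ on $\partial U_\delta$ to be consistent with your identification of $\mathscr{R}_+$ as the value from \emph{outside} $U_\delta$, the circles must carry clockwise (negative) orientation, not counterclockwise as you state; the algebraic matching $\mathscr{R}_+\mathscr{N}=\mathscr{R}_-\mathscr{P}$ you actually use is correct under that convention.
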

\begin{proof}
It can be easily checked from the definition of $\mathscr{P}$, $\mathscr{\widetilde P}$, and $\mathscr{N}$, that $\mathscr{B}$, given by \eqref{eq:b}, is the solution of \rhb.  As $\det\mathscr{R}=\det(\mathscr{P})=\det(\widetilde{\mathscr{P}})\equiv1$, it holds that $\det(\mathscr{A})=\det(\mathscr{B})\equiv1$ in $\overline\C$, which finishes the proof of the lemma.
\end{proof}

\section{$\bar\partial$-Problem}
\label{sec:pbp}

In the previous section we completed the first step in solving \rhds. That is we solved \rha, the problem with the same conditions as in \rhds~ except for the deviation from analyticity, which was entirely ignored. In this section, we solve a complementary problem, namely, we show that a solution of a certain $\bar\partial$-problem for matrix functions exists. Set
\begin{equation}
\label{eq:dfaw}
\mathscr{W} = \mathscr{A}\mathscr{W}_0\mathscr{A}^{-1},
\end{equation}
where $\mathscr{W}_0$ was defined in \eqref{eq:dfaw0} and $\mathscr{A}$ is the solution of \rha. In what follows, we seek the solution of the following $\bar\partial$-problem (\dbd):
\begin{itemize}
\item[(a)] $\mathscr{D}$ is a continuous matrix function in $\overline\C$ and $\mathscr{D}(\infty)=\mathscr{I}$;
\item[(b)] $\mathscr{D}$ deviate from an analytic matrix function according to $\bar\partial\mathscr{D}=\mathscr{D}\mathscr{W}$, where the equality is understood in the sense of distributions. 
\end{itemize}
Then the following lemma holds.
\begin{lem}
\label{lem:deltabar}
If \eqref{eq:ab} is fulfilled, then \dbd~ is solvable for all $n$ large enough. The solution $\mathscr{D}$ is unique and satisfies
\begin{equation}
\label{eq:d}
\mathscr{D}=\mathscr{I}+o(1),
\end{equation}
where $o(1)$ satisfies \eqref{eq:deltane}.
\end{lem}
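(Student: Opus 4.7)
Because $\mathscr{W}=\mathscr{A}\mathscr{W}_0\mathscr{A}^{-1}$ is supported on the fixed compact set $\overline{\Omega_+\cup\Omega_-}$, applying the Cauchy--Green formula \eqref{eq:cthnaf} entrywise to $\mathscr{D}-\mathscr{I}$ shows that \dbd~is equivalent to the integral equation
\[
\mathscr{D} = \mathscr{I} + \ot_n\mathscr{D}, \qquad \ot_n\mathscr{D}:=\ok(\mathscr{D}\mathscr{W}),
\]
to be solved in $L^\infty(\overline\C)$. The plan is to prove that $\|\ot_n\|_{L^\infty\to L^\infty}\to 0$ as $n\to\infty$, so that $(\mathscr{I}-\ot_n)^{-1}$ exists via Neumann series and the (automatically unique) solution satisfies
\[
\|\mathscr{D}-\mathscr{I}\|_\infty \leq \frac{\|\ok(\mathscr{W})\|_\infty}{1-\|\ot_n\|_{L^\infty\to L^\infty}}.
\]
Continuity of $\mathscr{D}$ on $\overline\C$ is then automatic since $\ok$ maps $L^q(\Omega_+\cup\Omega_-)$ into $\cf^{0,1-2/q}(\overline\C)$ for every $q>2$ (see Section~\ref{subsec:io}).

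\textbf{Pointwise control of $\mathscr{W}$.} Since $\det\mathscr{A}\equiv 1$ (Lemma~\ref{lem:rhar}), the entries of $\mathscr{A}^{-1}$ are bounded by those of $\mathscr{A}$. The explicit form \eqref{eq:b} combined with Lemma~\ref{lem:rhr} shows that $\mathscr{A}$ is uniformly bounded on compact subsets of $(\Omega_+\cup\Omega_-)\setminus\{\pm1\}$ and exhibits near $\pm 1$ only the singular behaviour prescribed by \rha(c)--(d). Invoking Lemma~\ref{lem:extension} to write $\bar\partial c = cf$, this leads to
\[
|\mathscr{W}(z)|\leq C\,|r_n(z)|\,\frac{|f(z)|}{|w(z)|}\bigl(1+|1-z|^{-|\alpha|}+|1+z|^{-|\beta|}\bigr), \qquad z\in\Omega_+\cup\Omega_-.
\]
For $q>2$ chosen within the range permitted by \eqref{eq:ab} (so that H\"older's inequality with exponents $p/q$ and $p/(p-q)$ absorbs the weighted singularities into a locally integrable $L^{pq/(p-q)}$ function), the right-hand side lies in $L^q(\Omega_+\cup\Omega_-)$, and the elementary bound $|\ok(\mathscr{W})(z)|\leq \|\mathscr{W}\|_q\sup_z\|1/|\cdot-z|\|_{L^{q/(q-1)}(\Omega_+\cup\Omega_-)}$, finite since $q/(q-1)<2$, reduces matters to quantitative $L^q$ decay of $\mathscr{W}$ itself.

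\textbf{Extracting the rate via a tubular decomposition.} Split $\Omega_+\cup\Omega_-$ into a tubular neighbourhood $T_{\delta_n}:=\{z:\dist(z,\Delta)<\delta_n\}$ and its complement. Formula \eqref{eq:vd}, together with the square-root behaviour of the Green potential $V_D^\nu$ near smooth interior points of the slit $\Delta$, yields $|r_n(z)|\leq\exp(-c_1 n\sqrt{\delta_n})$ on the complement of $T_{\delta_n}$, whereas $|r_n|\leq C$ on $T_{\delta_n}$ by Definition~\ref{df:S}-(1) and the maximum principle applied on $\Omega_\pm$. Meanwhile the $L^q$ norm of $|f|/|w|\bigl(1+|1-z|^{-|\alpha|}+|1+z|^{-|\beta|}\bigr)$ over $T_{\delta_n}$ is $O(\delta_n^{\kappa})$ for some $\kappa=\kappa(s,\alpha,\beta,q)>0$. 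Balancing the two contributions by letting $\delta_n\to 0$ polynomially in $1/n$ gives $\|\mathscr{W}\|_q=O(n^{-a})$, hence the rate advertised in \eqref{eq:deltane}.

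\textbf{Main obstacle.} The principal difficulty is to perform the above optimisation with enough precision to reach the sharp exponent $a<(s-\max\{|\alpha|,|\beta|\})/2$: the factor $1/2$ reflects the square-root of $\delta_n$ in the decay of $|r_n|$ (via $V_D^\nu\sim\sqrt{\dist(\cdot,\Delta)}$), while obtaining $\kappa$ of the right size in $s-\max\{|\alpha|,|\beta|\}$ requires handling separately the Sobolev regime (via the Muckenhoupt weight estimate \eqref{eq:beurlinglp}) and the H\"older regime (via the sharp singular-integral estimates of Section~\ref{sec:bvp}), and is where the three branches in the definition of $s$ enter the argument.
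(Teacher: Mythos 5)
Your integral-equation set-up is identical to the paper's, but the quantitative core of your argument has two genuine problems.

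First, the claimed square-root decay of the Green potential is wrong. Near a \emph{smooth interior} point of $\Delta$, the Green potential $V_D^\nu$ (which is harmonic in $D$ away from $\supp(\nu)$, vanishes on $\Delta$, and has a non-degenerate normal derivative) behaves \emph{linearly} in $\dist(\cdot,\Delta)$, not as $\sqrt{\dist(\cdot,\Delta)}$; square-root behaviour occurs only at the two tips $\pm1$. Consequently $|r_n(z)|\approx\exp\{-2nV_D^{\nu_n}(z)\}$ decays like $\exp(-cn\,\dist(z,\Delta))$, and your tubular-decomposition balance, with $\exp(-c_1n\sqrt{\delta_n})$ on the outer region, is built on an incorrect pointwise estimate for $r_n$. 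This matters because you also explicitly credit the $1/2$ in the exponent $a<(s-\max\{|\alpha|,|\beta|\})/2$ to ``the square-root of $\delta_n$'' --- that attribution is wrong. In the paper, the $1/2$ comes instead from two \emph{planar} integrability thresholds: $|1-z^2|^{-\max\{|\alpha|,|\beta|\}}\in\lf^q(\Omega)$ iff $q<2/\max\{|\alpha|,|\beta|\}$, and $1/|z-\cdot|\in\lf^q(\Omega)$ uniformly iff $q<2$. The paper applies H\"older's inequality twice to reduce $\|\ok_\mathscr{W}\|$ to $\|r_n\|_{q,\Omega}$ with $q>2/(s-\max\{|\alpha|,|\beta|\})$, dominates $|r_n|$ by a Blaschke product $b_n$ for $D$ (using $|r_n^\pm|=O(1)$ on $\Delta$ and the maximum principle), transports to the unit disk via the conformal map $\psi$, and integrates the explicit exponential bound on the Blaschke product over an annulus, obtaining $\|r_n\|_{q,\Omega}=O(n^{-1/q})$. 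There is no balancing and no tube: the exponential decay of $b_n$ is integrated directly, and the rate is $a=1/q$.

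Second, the pointwise bound you write down for $\mathscr{W}$, namely $|r_n|\,|f|/|w|\,\bigl(1+|1-z|^{-|\alpha|}+|1+z|^{-|\beta|}\bigr)$, is not the correct sandwich estimate. When you compute $\mathscr{A}\mathscr{W}_0\mathscr{A}^{-1}$ with $\mathscr{W}_0$ having a single non-zero entry in the $(2,1)$ slot, only the products $a_{12}a_{22}$, $a_{12}^2$, $a_{22}^2$ (times the non-zero entry of $\mathscr{W}_0$) appear, and the behaviour of these near $\pm1$ depends on the sign of $\alpha$ (respectively $\beta$). For $\alpha<0$ inside the lens, $a_{12},a_{22}=O(|1-z|^{-|\alpha|})$ while $1/|w|=O(|1-z|^{|\alpha|})$; for $\alpha>0$ inside the lens, $a_{12},a_{22}=O(1)$ while $1/|w|=O(|1-z|^{-\alpha})$. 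In both cases the net effect near $1$ is $O(|1-z|^{-|\alpha|})$, which your formula does not reproduce uniformly. The clean combined bound is $|\mathscr{W}_{lk}|\leq\const\,|r_n|\,|f|\,|1-z^2|^{-\max\{|\alpha|,|\beta|\}}$, and it is precisely this bound that drives the constraint $q<2/\max\{|\alpha|,|\beta|\}$ and hence the factor $1/2$ in the exponent.
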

\begin{proof}
We start by examining the summability and smoothness of the entries of $\mathscr{W}$. As $\mathscr{A}$ is the solution of \rha, it is an analytic matrix function in $\Omega_\pm$ and its behavior near $\pm1$ is given by \rhds(c)--(d). Since $\det\mathscr{A}\equiv1$ in $\overline\C$, the behavior of $\mathscr{A}^{-1}$ near $\pm1$ is also governed by the matrices in \rhds(c)-(d) with the elements on the main diagonal interchanged. Observe also that $|c_nc|$ is uniformly bounded in $\Omega_\pm$. Then a simple computation combined with Lemma \ref{lem:extension} yields that
\begin{equation}
\label{eq:bnd1w}
|\mathscr{W}_{lk}| \leq \const |r_nf_{\alpha,\beta}f|, \quad l,k=1,2,
\end{equation}
where $f$ comes from the decomposition of $\bar\partial c$ in Lemma \ref{lem:extension} and $f_{\alpha,\beta}(z):=\log^2|1-z^2|$ if $\upsilon:=\max\{|\alpha|,|\beta|\}=0$ and $f_{\alpha,\beta}(z):=|1-z^2|^\upsilon$ otherwise. 

Let $s$ be as in \eqref{eq:ab} and denote by $\Omega$ the union $\Omega_+\cup\Omega_-\cup\Delta^\circ$. When $\theta\in\sof^{1-1/p}_p$, $p\in(2,\infty)$, it holds that $f\in\lf^p(\Omega)$ by Lemma~\ref{lem:extension}. Then we get from the H\"older inequality that 
\begin{equation}
\label{eq:bnd2w}
f_{\alpha,\beta}f\in\lf^q(\Omega), \quad q\in
\left\{
\begin{array}{ll}
\left(2,\frac{2}{1+\upsilon-s}\right), & s-\upsilon\leq 1, \smallskip \\
\left(2,\infty\right], & s-\upsilon>1,
\end{array}
\right. 
\end{equation}
since $f_{\alpha,\beta}\in\lf^q(\Omega)$, $q\in\left(\frac{2p}{p-2},\frac{2}{\upsilon}\right)$. When $\theta\in\cf^{0,\varsigma}$, $\varsigma\in\left(\frac12,1\right]$, \eqref{eq:bnd2w} can be obtained from the inclusion $\cf^{0,\varsigma}\subset\sof^{1-1/q}_q$ for $q\in\left(2,\frac{1}{1-\varsigma}\right)$. When $\theta\in\cf^{m,\varsigma}$, $m\in\N$, $\varsigma\in(0,1]$, we have that\footnote{Recall that by Lemma~\ref{lem:extension}, $f$ and all its partial derivatives up to and including the order $m-1$ have well-defined vanishing boundary values on $\Delta$ and therefore $f$ is indeed $C^{m-1,\varsigma}$ throughout~$\Omega$.} $f\in\cf^{m-1,\varsigma-\epsilon}_0(\overline\Omega)$, $\epsilon\in(0,\varsigma)$, by Lemma~\ref{lem:extension}. This, in particular, implies that
\[
|(f_{\alpha,\beta}f)(z)|\leq\const|z^2-1|^{s-1-\upsilon-\epsilon},
\]
which consequently yields \eqref{eq:bnd2w}. 

Suppose now that \dbd~ is solvable and $\mathscr{D}$ is a solution. Let $\Gamma$ be a smooth arc encompassing $\overline\Omega$. Convolve $\mathscr{D}$ with a family of mollifiers so that the result is smooth and converges in the Sobolev sense to $\mathscr{D}$. Then by applying the Cauchy-Green formula \eqref{eq:cthnaf} to this convolution and taking the limit, we get that
\[
\mathscr{D} = \oc_\Gamma(\mathscr{D})+\ok_\Gamma(\mathscr{DW}) = \mathscr{I} + \ok_\mathscr{W}\mathscr{D}
\]
since $\mathscr{W}$ has compact support $\overline\Omega$, i.e., $\mathscr{D}$ is analytic outside of $\overline\Omega$, and $\mathscr{D}(\infty)=\mathscr{I}$, where $\ok_\mathscr{W}(\cdot)=\ok(\cdot\mathscr{W})$. Hence, every solution of \dbd~ is a solution of the following integral equation
\begin{equation}
\label{eq:ieq}
\mathscr{I} = (\oi-\ok_\mathscr{W})\mathscr{D},
\end{equation}
where $\oi$ is the identity operator. As explained in Section \ref{subsec:io}, $\oi-\ok_\mathscr{W}$ is a bounded operator from $\lf^\infty(\C)$ into itself that maps continuous functions into continuous functions preserving their value at infinity. Conversely, if $\mathscr{D}$ is a solution of \eqref{eq:ieq} in $\lf^\infty(\C^{2\times2})$ then $\mathscr{D}$ is, in fact, continuous in $\overline\C^{2\times2}$, analytic outside of $\overline\Omega$, $\mathscr{D}(\infty)=\mathscr{I}$, and $\bar\partial\mathscr{D}=\mathscr{DW}$ by \eqref{eq:okreverse} in the distributional sense. Thus, \dbd~ is equivalent to uniquely solving \eqref{eq:ieq} in $(\lf^\infty(\C))^{2\times2}$ because $\mathscr{D}-\ok_\mathscr{W}\mathscr{D}$ is holomorphic in $\C$ and is identity at infinity.

We claim that
\begin{equation}
\label{eq:claim}
\|\ok_\mathscr{W}\| \leq \frac{C_0}{n^a}, \quad
a\in\left\{
\begin{array}{ll}
\left(0,\frac{s-\upsilon}{2}\right), & s-\upsilon\leq 1, \smallskip \\
\left(0,\frac12\right), & s-\upsilon>1,
\end{array}
\right. 
\end{equation}
where $\|\cdot\|$ is the norm of $\ok_\mathscr{W}$ as an operator from $\lf^\infty(\C)$ into itself and the constant $C_0$ depends on $a$. Assuming this claim to be true, we get that $(\oi-\ok_\mathscr{W})^{-1}$ exists as a Neumann series and
\[
\mathscr{D} = \mathscr{I} + O\left(\frac{\|\ok_\mathscr{W}\|}{1-\|\ok_\mathscr{W}\|}\right),
\]
which finishes the proof of the lemma granted the validity of \eqref{eq:claim}. Thus, it only remains to prove  estimate \eqref{eq:claim}. To this end, observe that \eqref{eq:bnd1w}, \eqref{eq:bnd2w}, and the H\"older inequality imply
\begin{equation}
\label{eq:ss1}
\|\ok_\mathscr{W}\| \leq C_1 \max_{z\in\overline\Omega}\left\|\frac{r_nf_{\alpha,\beta}f}{z-\cdot}\right\|_{1,\Omega} \leq C_2 \max_{z\in\overline\Omega}\left\|\frac{r_n}{z-\cdot}\right\|_{q,\Omega}, \quad
q\in\left\{
\begin{array}{ll}
\left(\frac{2}{s-\upsilon+1},2\right), & s-\upsilon\leq 1, \smallskip \\
\left[1,2\right), & s-\upsilon>1,
\end{array}
\right. 
\end{equation}
where $C_1,C_2$ are constants depending on $q$ and $\|\cdot\|_{q,\Omega}$ is the usual norm on $\lf^q(\Omega)$. Thus, it holds that
\begin{equation}
\label{eq:ss3}
\|\ok_\mathscr{W}\| \leq C_3 \|r_n\|_{q,\Omega}, \quad q\in\left\{
\begin{array}{ll}
\left(\frac{2}{s-\upsilon},\infty\right), & s-\upsilon\leq 1, \smallskip \\
\left(2,\infty\right), & s-\upsilon>1,
\end{array}
\right.
\end{equation}
by H\"older inequality, where $C_3$ is a constant depending on $q$.

In another connection, let $\psi$ be the conformal map of $D$ onto $\D$, $\psi(\infty)=0$, $\psi^\prime(\infty)>0$, and let $b_n$ be a Blaschke product with respect to $D$ that has the same zeros as $r_n$ counting multiplicities, i.e.,
\[
b_n(z)  = \prod_{r_n(e)=0}\frac{\psi(z)-\psi(e)}{1-\overline{\psi(e)}\psi(z)}, \quad z\in D.
\]
Then by the maximum modulus principle for analytic functions and Definition~\ref{df:S}-(1), we have that
\begin{equation}
\label{eq:kwbnd1}
|r_n(z)| \leq \max_{t\in\Delta}|r_n^\pm(t)||b_n(z)| \leq C_4 |b_n(z)|, \quad z\in D,
\end{equation}
where $C_4$ is independent of $n$ and $z$. Denote by $L_\rho$, $\rho\in(0,1)$, the level line of $\psi$, i.e. $L_\rho:=\{z\in D:~|\psi(z)|=\rho\}$. Due to Definition~\ref{df:S}-(2), there exist $1>\rho_0>\rho_1>0$ such that $\overline\Omega$ is contained within the bounded domain with boundary $L_{\rho_0}$, say $\Omega_{\rho_0}$, and all the zeros of $b_n$ are contained within the unbounded domain with boundary $L_{\rho_1}$. Then
\begin{equation}
\label{eq:kwbnd2}
\|b_n\|_{q,\Omega} \leq \|b_n\|_{q,\Omega_{\rho_0}} = \left\|(b_n\circ\psi^{-1})\left((\psi^{-1})^\prime\right)^2\right\|_{q,\A_{\rho_0,1}},
\end{equation}
where $\A_{\rho_0,1}:=\{z:~\rho_0<|z|<1\}$, $\psi^{-1}$ is the inverse of $\psi$, and the index $q$ is as in \eqref{eq:ss3}. As $\psi^{-1}$ is a conformal map of $\D$ onto $D$, it holds that $|(\psi^{-1})^\prime|\leq C_5$ in $\A_{\rho_0,1}$. Set
\[
b_n^*(z):=(b_n\circ\psi^{-1})(z) = \prod_{b_n(\psi^{-1}(e^*))=0}\frac{z-e^*}{1-z\overline{e^*}}, \quad z\in\D.
\]
Then by \eqref{eq:ss3}, \eqref{eq:kwbnd1}, and \eqref{eq:kwbnd2} and after, we get that
\begin{equation}
\label{eq:ssfinal}
\|\ok_\mathscr{W}\| \leq C_5 \|b^*_n\|_{q,\A_{\rho_0,1}},
\end{equation}
where the constant $C_5$ depends on $q$. Observe now that for $|z|=\rho$, $\rho\in(\rho_0,1)$, it holds that
\begin{equation}
\label{eq:bndbn}
|b_n^*(z)| \leq \prod\frac{\rho+|e^*|}{1+\rho|e^*|} \leq \exp\left\{-(1-\rho)\sum\frac{1-|e^*|}{1+\rho|e^*|}\right\} \leq \exp\left\{-2n\frac{(1-\rho)(1-\rho_1)}{1+\rho}\right\}
\end{equation}
since $|e^*|<\rho_1$ by the definition of $\rho_1$. Clearly, \eqref{eq:ssfinal} and \eqref{eq:bndbn} yield that
\[
\|\ok_\mathscr{W}\| \leq C_5 \left(\int_0^{2\pi}\int_{\rho_0}^1\exp\left\{-2nq\frac{(1-\rho)(1-\rho_1)}{1+\rho}\right\}\rho d\rho dt\right)^{1/q} \leq C_6 n^{-1/q},
\]
which is exactly \eqref{eq:claim} with $a=1/q$.
\end{proof}

\section{Solution of \rhy~ and Proof of Theorem \ref{thm:sa}}
\label{sec:solution}

In this last section, we gather the material from Sections \ref{sec:rhpbp}--\ref{sec:pbp} to prove Theorem \ref{thm:sa}. It is an immediate consequence of Lemmas \ref{lem:rhy}, \ref{lem:rht}, and \ref{lem:rhs}, combined with Lemmas \ref{lem:rhab}, \ref{lem:rhar}, and \ref{lem:deltabar} that the following result holds.

\begin{lem}
\label{lem:solutionY}
If \eqref{eq:ab} is fulfilled, then the solution of \rhy~ uniquely exists for all $n$ large enough and can be expressed by reversing the transformations $\mathscr{Y}\to\mathscr{T}\to\mathscr{S}$ using \eqref{eq:t} and \eqref{eq:s} with $\mathscr{S}=\mathscr{D}\mathscr{A}$, where $\mathscr{A}$ is the solution of \rha~ and $\mathscr{D}$ is the solution of \dbd.
\end{lem}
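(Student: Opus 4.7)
The plan is to traverse in reverse the chain of transformations set up in Sections \ref{sec:rhpbp}--\ref{sec:pbp}, building a solution of \rhy~ from the already constructed solutions of \rha~ and \dbd. Concretely, I would set
\[
\mathscr{S} := \mathscr{D}\mathscr{A},
\]
where $\mathscr{A}$ is the solution of \rha~ supplied by Lemma \ref{lem:rhar} (via Lemma \ref{lem:rhab} and the explicit formula \eqref{eq:b}) and $\mathscr{D}$ is the solution of \dbd~ supplied by Lemma \ref{lem:deltabar}; both exist for all $n$ large enough under assumption \eqref{eq:ab}. Since $\det\mathscr{A}\equiv1$ by Lemma \ref{lem:rhar}, the inverse $\mathscr{A}^{-1}$ is everywhere well defined and the matrix $\mathscr{W}$ from \eqref{eq:dfaw} makes sense.

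Next I would verify that $\mathscr{S}$ fulfills \rhds(a)--(e). Conditions (a), (c), (d) are immediate: by Lemma \ref{lem:deltabar}, $\mathscr{D}$ is continuous on $\overline\C$, uniformly bounded, and satisfies $\mathscr{D}(\infty)=\mathscr{I}$, so the normalization at infinity and the prescribed local behavior near $\pm1$ are inherited directly from $\mathscr{A}$, which already matches \rha(c)--(d) (identical to \rhds(c)--(d)). Condition (b) holds because $\mathscr{D}$ has no jumps across $\Sigma_n^\circ$, so $\mathscr{S}_\pm = \mathscr{D}\mathscr{A}_\pm$ and the jump matrices prescribed in \rha(b) transfer verbatim to $\mathscr{S}$. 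The step requiring the most care is (e). Using that $\mathscr{A}$ is analytic on $\overline\C\setminus\Sigma_n$, so that the Leibniz rule applies to $\bar\partial$ in the distributional sense within each connected component of $\overline\C\setminus\Sigma_n$, I compute
\[
\bar\partial \mathscr{S} = (\bar\partial \mathscr{D})\mathscr{A} + \mathscr{D}\,(\bar\partial \mathscr{A}) = \mathscr{D}\mathscr{W}\mathscr{A} = \mathscr{D}\mathscr{A}\,\mathscr{W}_0\,\mathscr{A}^{-1}\mathscr{A} = \mathscr{S}\mathscr{W}_0,
\]
where I used \dbd(b) for $\bar\partial\mathscr{D}$, the analyticity of $\mathscr{A}$ off $\Sigma_n$, and the definition \eqref{eq:dfaw}. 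Hence $\mathscr{S}=\mathscr{D}\mathscr{A}$ solves \rhds.

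With a solution of \rhds~ in hand, Lemma \ref{lem:rhs} pulls it back to a solution $\mathscr{T}$ of \rht~ via \eqref{eq:s}, and Lemma \ref{lem:rht} then pulls this further back to a solution $\mathscr{Y}$ of \rhy~ via \eqref{eq:t}; this proves existence. Uniqueness is free: the uniqueness part of Lemma \ref{lem:rhy} applies as soon as \rhy~ is solvable, and this uniqueness is transferred down the chain by Lemmas \ref{lem:rht} and \ref{lem:rhs} (so, in particular, the $\mathscr{S}$ obtained above is the unique solution of \rhds). The argument therefore has no real obstacle, since all the technical work has been absorbed into the earlier lemmas; the only conceptual subtlety worth flagging is the distributional Leibniz computation above, which is unproblematic precisely because one factor, $\mathscr{A}$, is locally analytic and hence classically smooth on each cell of $\overline\C\setminus\Sigma_n$.
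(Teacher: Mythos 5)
Your proposal is correct and follows exactly the route the paper intends: the paper's proof is just the one-line remark that the lemma is an ``immediate consequence'' of Lemmas \ref{lem:rhy}, \ref{lem:rht}, \ref{lem:rhs}, \ref{lem:rhab}, \ref{lem:rhar}, and \ref{lem:deltabar}, and you have simply unpacked that remark. The one step the paper leaves unspoken and you spell out — that $\mathscr{S}=\mathscr{D}\mathscr{A}$ actually satisfies \rhds(a)--(e), including the distributional Leibniz computation $\bar\partial(\mathscr{D}\mathscr{A})=(\bar\partial\mathscr{D})\mathscr{A}=\mathscr{D}\mathscr{W}\mathscr{A}=\mathscr{S}\mathscr{W}_0$ using $\bar\partial\mathscr{A}=0$ off $\Sigma_n$ and \eqref{eq:dfaw} — is carried out correctly and is indeed where the only content lies.
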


\subsection{Asymptotics away from $\Delta$, formula \eqref{eq:sa1}}

We claim that \eqref{eq:sa1} holds locally uniformly in $D$. Clearly, for any given closed set in $D$, it can be easily arranged that this set lies exterior to the lens $\Sigma_n^{rd}$, and therefore to the lenses $\Sigma_n$ and $\Sigma_{ext}$. Thus, the asymptotic behavior of $\mathscr{Y}$ on this closed is given by
\[
\mathscr{Y} = \left(2^n\epsilon_n\right)^{-\sigma_3}\mathscr{RND}E_n^{\sigma_3}
\]
due to Lemma \ref{lem:solutionY}, where $\epsilon_n$ and $E_n$ were defined in \eqref{eq:en}, $\mathscr{R}$ is the solution of \rhr~ given by Lemma \ref{lem:rhar}, and $\mathscr{N}$ is the solution of \rhn~ given by \eqref{eq:n}. Moreover, we have that
\begin{equation}
\label{eq:rnd}
\mathscr{RND} = \left(\left[1+o(1)\right]\mathscr{N}_{lk}\right)_{l,k=1,2},
\end{equation}
where $o(1)$ satisfies \eqref{eq:deltane} locally uniformly in $\overline\C\setminus\{\pm1\}$, including on $(\Delta^\pm)^\circ$, on account of \eqref{eq:r} and \eqref{eq:d}. Thus, it holds that
\[
\left\{
\begin{array}{lll}
\mathscr{Y}_{11} & = [1+o(1)]\mathscr{N}_{11}E_n/(2^n\epsilon_n) & = \map^n/(2^n\szf_{w\sr^+}\szf_{(hh_n/v_n)}) \smallskip \\
\mathscr{Y}_{12} & = [1+o(1)]\mathscr{N}_{12}/(E_n2^n\epsilon_n) & = 2i\gm_{w}\gm_{(hh_n/v_n)}\szf_w\szf_{(hh_n/v_n)}/(\map^{n+1}\szf_{\sr^+})
\end{array}
\right.
\]
by \eqref{eq:en} and \eqref{eq:n}, where $o(1)$ satisfies \eqref{eq:deltane} locally uniformly in $D$. Recall now that the entries of $\mathscr{N}$ are, in fact, continued Szeg\H{o} functions defined with respect to $\Delta_n$. However, we have already mentioned that they coincide with $\szf_{\sr^+}$ and $\szf_w$ outside of a set exterior to $\Delta_n\cup\Delta$. Thus, the equations above indeed hold true. Hence, asymptotic formulae \eqref{eq:sa1} follow from \eqref{eq:y}, \eqref{eq:sngamman}, and \eqref{eq:szfsr}.

\subsection{Asymptotics in the Bulk, Formula \eqref{eq:sa2}}

To derive asymptotic behavior of $q_n$ and $R_n$ on $\Delta\setminus\{\pm1\}$, we need to consider what happens within the lens $\Sigma_{ext}$ and outside the disks $U_\delta$ and $\widetilde U_\delta$. We shall consider the asymptotics of $\mathscr{Y}$ from within $\Omega_+$, the upper part of the lens $\Sigma_{ext}$, the behavior of $\mathscr{Y}$ in $\Omega_-$ can be deduced in a similar fashion.

Recall that $\Delta_n$ either coincides with $\Delta$ or intersects it at finite number of points, as both arcs are images of $[-1,1]$ under holomorphic maps. Set
\[
\Delta_n^* := \Delta\cap\Omega_{n+} \quad \Delta_n^{**}:=\Delta\cap\Omega_{n-},
\]
where $\Omega_{n+}$ and $\Omega_{n-}$ are the upper and lower parts of the lens $\Sigma_n^{md}$. Then, it holds that
\begin{equation}
\label{eq:aplus}
\mathscr{A}_+ = \left\{
\begin{array}{ll}
\mathscr{B}_+, & \mbox{on} \quad \Delta_n^*, \smallskip \\
\mathscr{B}_-\left(\begin{array}{cc} 0 & w \\ -1/w & 0 \end{array}\right), & \mbox{on} \quad \Delta_n^{**},
\end{array}
\right.
\end{equation}
by \eqref{eq:a}, where with a slight abuse of notation we denote by $\mathscr{B}_\pm$ the values of $\mathscr{B}$ in $\Omega_{n\pm}$ and on $\Delta_n^\pm$. Then it holds on $\Delta^\circ$ by \rhn(b) and on account of Lemma \ref{lem:solutionY}, \eqref{eq:aplus}, and \eqref{eq:b} that
\[
\mathscr{S}_+ = \left\{
\begin{array}{ll}
\mathscr{R}\mathscr{N}_+\mathscr{D}, & \mbox{on} \quad \Delta_n^* \smallskip \\
\mathscr{R}\mathscr{N}_-\left(\begin{array}{cc} 0 & w \\ -1/w & 0 \end{array}\right)\mathscr{D}, & \mbox{on} \quad \Delta_n^{**}
\end{array}
\right.
= \mathscr{R}\mathscr{\widetilde N}_+\mathscr{D},
\]
where, again, under $\mathscr{N}_\pm$ we understand the values of $\mathscr{N}$ in $\Omega_{n\pm}$ and on $\Delta_n^\pm$ and $\mathscr{\widetilde N}$ is the analytic continuation of $\mathscr{N}$ that satisfies \rhn, only with a jump across $\Delta$. Clearly, $\mathscr{\widetilde N}$ is defined by \eqref{eq:nstar} and \eqref{eq:n}, where $\szf_{\sr^+}$ and $\szf_w$ are the Szeg\H{o} functions of $\sr^+$ and $w$ with respect to $\Delta$ and not the continued functions that actually appear in \eqref{eq:nstar} and \eqref{eq:n}. Thus, we deduce from Lemma \ref{lem:solutionY} and \eqref{eq:rnd} that
\begin{eqnarray}
\mathscr{Y}_+ &=& \left(2^n\epsilon_n\right)^{-\sigma_3}\left(\left[1+o(1)\right]\mathscr{\widetilde N}_{lk}^+\right)_{l,k=1,2}\left(\begin{array}{cc} 1 & 0 \\ (r_nc_nc)^+/w & 1 \end{array}\right)(E_n^+)^{\sigma_3} \nonumber \\
{} &=& \left(2^n\epsilon_n\right)^{-\sigma_3}\left(\left[1+o(1)\right]\mathscr{\widetilde N}_{lk}^+\right)_{l,k=1,2}\left(\begin{array}{cc} E_n^+ & 0 \smallskip \\ E_n^-/w & 1/E_n^+ \end{array}\right) \nonumber
\end{eqnarray}
where $o(1)$ satisfies \eqref{eq:deltane} locally uniformly on $\Delta^\circ$ and we used \eqref{eq:encn} to obtain the second equality. Therefore, it holds that
\[
\left\{
\begin{array}{ll}
\mathscr{Y}_{11} & = [1+o(1)]\mathscr{\widetilde N}_{11}^+E_n^+/(2^n\epsilon_n) + [1+O(\delta_{n,\epsilon})]\mathscr{\widetilde N}_{12}^+E_n^-/(2^n\epsilon_nw)  \smallskip \\
\mathscr{Y}_{12}^+ & = [1+o(1)]\mathscr{\widetilde N}_{12}^+/(E_n^+2^n\epsilon_n) 
\end{array}
\right.
\]
with $o(1)$ satisfying \eqref{eq:deltane} locally uniformly on $\Delta^\circ$. As in the end of the previous section, we deduce \eqref{eq:sa2} from \eqref{eq:y}, the formulae
\[
\frac{\mathscr{\widetilde N}^\pm_{11} E_n^\pm}{2^n\epsilon_n} = \frac{1}{\szf_n^\pm} \quad \mbox{and} \quad \frac{\mathscr{\widetilde N}_{12}^+}{E_n^+2^n\epsilon_n} = \frac{\szf_n^+}{\sr^+},
\]
and by noticing that
\[
\frac{1}{w}\frac{\mathscr{\widetilde N}_{12}^+}{\mathscr{\widetilde N}_{11}^-}  = \frac{1}{w} \frac{\gm_w\szf_w^+\szf_w^-\szf_{\sr^+}^-}{\map^+\szf_{\sr^+}^+} = \frac{i\szf_{\sr^+}^-}{\map^+\szf_{\sr^+}^+} \equiv 1
\]
on $\Delta^\circ$ by \eqref{eq:szego} and \eqref{eq:szfsrpm}.

\bibliographystyle{plain}
\bibliography{jp}

\end{document}